\documentclass[final,3p,times]{elsarticle}
\biboptions{sort&compress}
\usepackage{cancel}
\usepackage{xcolor}
\usepackage{amsmath,amsfonts,amssymb,amscd,amsthm, mathrsfs}
\usepackage{subcaption}
\usepackage{extarrows}
\usepackage{lineno}
\usepackage{hyperref}					% Reference
\usepackage{verbatim}                   % 多行注释
\usepackage{booktabs}                   %表格
\usepackage{float}

\usepackage{tabularx}  % 用于调整表格宽度
\usepackage{booktabs}  % 美化表格线

\usepackage{graphicx}
\usepackage{epsfig}
\usepackage{epstopdf}
\usepackage[normalem]{ulem}

\hypersetup{colorlinks,%				% Reference color setup	
	linkcolor=blue,%
	citecolor=blue}

\newtheoremstyle{kai}
{3pt}{3pt}{}{}{\bfseries}{.}{.5em}{}
\makeatletter
\def\EquationsBySection{\def\theequation
	{\thesection.\arabic{equation}}%
	\@addtoreset{equation}{section}}

\newcommand\old[1]{}

\newtheorem{theorem}{Theorem}[section] % 如果不采用章节号做前缀， 则不用[section]
\newtheorem{lemma}[theorem]{Lemma}% 这句定义使得lem 环境和thm 共享编号
\newtheorem{assumption}[theorem]{Assumption}

\newtheorem{remark}[theorem]{Remark}
\newtheorem{example}{Example}[section]

\journal{Communications in Nonlinear Science and Numerical Simulation} 
\EquationsBySection
\makeatother

\begin{document}

\begin{frontmatter}
	\title{An explicit adaptive time-stepping scheme for superlinear stochastic diffusion systems
    }
    
    \author[aff1]{Xueqi Wen}
    \author[aff2]{Guozhen Li}
    \author[aff1]{Yuanping Cui}
    \author[aff1]{Xiaoyue Li\corref{cor1}}

    \address[aff1]{School of Mathematical Sciences,
    	Tiangong University, Tianjin 300387, China}
    
    \address[aff2]{School of Mathematics and Statistics,
    	Northeast Normal University, Changchun, Jilin 130024, China}
    
    \cortext[cor1]{Corresponding author.}
    
    \ead{lixy@tiangong.edu.cn}
    
    \fntext[funding]{  	
    	Research of Xiaoyue Li was supported by the National Natural Science Foundation
    	of China (No. 12371402) and the Tianjin Natural Science Foundation (24JCZDJC00830); Research of Yuanping Cui was supported by the National Natural Science Foundation of China (No. 12401216).}

    \begin{abstract} 	
  		This paper develops an adaptive time-stepping Euler--Maruyama (EM) scheme for stochastic diffusion systems with superlinearly growing coefficients. The adaptive timestep is chosen according to the superlinear growth of both drift and diffusion coefficients. To prevent excessively small timesteps, a truncated EM scheme is employed as a backstop whenever the adaptive timestep falls below a prescribed threshold.
  		By combining the stochastic analysis with the stopping time technique, we establish the strong convergence of the proposed method and obtain the optimal $1/2$-order strong convergence rate in the $L^q$-sense for $q>2$. {Finally, numerical experiments are carried out for stiff, nonstiff, and stochastic Lorenz systems  to validate the theoretical findings. The results indicate that the proposed scheme achieves superior accuracy and performance compared to various fixed-step and adaptive alternatives.}  
    
    \end{abstract}

	\begin{keyword}
		stochastic diffusion systems; adaptive time-stepping; superlinear growth; convergence rate.

	\end{keyword}
	
\end{frontmatter}

\section{Introduction} \label{sec1}

Stiff stochastic systems with superlinearly growing coefficients naturally arise in various applications, such as stochastic chemical kinetics \cite{rathinam2003stiffness, ilie2012adaptive} and stochastic reaction-diffusion systems \cite{ta2015an}. Because the solutions of these stiff systems involve components with widely differing time scales, explicit fixed-step methods typically suffer from severe stepsize restrictions during numerical simulations \cite[p. 298]{kloeden1992numerical}. Implicit methods, such as the semi-implicit EM scheme, are commonly employed to address this issue \cite{hu1996semi, mao2013strong}. However, their implementation can become computationally prohibitive for high-dimensional or complex systems, thereby limiting their practical applicability. Consequently, adaptive time-stepping methods offer a compelling alternative for the numerical approximation of stiff SDEs with superlinearly growing coefficients.

Adaptive time-stepping methods, designed to prevent numerical solutions from diverging, have been extensively studied over the past two decades \cite{chen2025strong}. One class of such methods for SDEs is based on local error control; see e.g., \cite{gaines1997variable, mauthner1998stepsize, burrage2002a, lamba2007an, ilie2015adaptive, shardlow2016on}. However, these methods often require interpolation of the Brownian path when a proposed timestep is rejected, which increases implementation complexity \cite{kelly2018adaptive, fang2019adaptive}. An alternative approach determines the timestep based on the current state, thereby avoiding step rejection altogether.

Kelly and Lord \cite{kelly2018adaptive} proposed an adaptive EM scheme that controls the potential unbounded growth of the superlinearly growing drift coefficient by adjusting the length of each timestep, with a backstop tamed scheme invoked when the timestep becomes too small. For SDEs with one-sided Lipschitz drift and linearly growing diffusion coefficients, they established a strong convergence rate of order $1/2$ in the $L^2$-sense. This adaptive strategy was later extended to semi-implicit adaptive numerical schemes \cite{kelly2022adaptive} for semi-linear SDEs with superlinearly growing drift and diffusion coefficients, where a strong convergence rate of order $(1-\varepsilon)/2$, $\varepsilon \in (0,1)$, was obtained. In particular, the $1/2$-order rate is recovered when the diffusion coefficient satisfies a linear growth condition (see \cite[Remark 6]{kelly2022adaptive}). Furthermore, Fang and Giles \cite{fang2020adaptive} proposed an adaptive time-stepping EM scheme for SDEs with one-sided Lipschitz drift and linearly growing diffusion, establishing a $1/2$-order strong convergence rate in the $L^q$ -sense $(q > 2)$ over finite time intervals.

Despite these advances, optimal $L^q$ $(q > 2)$ error estimates for adaptive time-stepping schemes remain underdeveloped when both the drift and diffusion coefficients are allowed to grow superlinearly. To bridge this gap, the purpose of this work is to develop an adaptive time-stepping scheme for SDEs with superlinearly growing drift and diffusion coefficients, and to establish the optimal $1/2$-order strong convergence rate for the proposed scheme.

Motivated by the adaptive time-stepping strategies proposed by Kelly and Lord \cite{kelly2018adaptive, kelly2022adaptive}, we develop an explicit adaptive time-stepping EM scheme with a backstop truncated EM method for SDEs where both drift and diffusion coefficients grow superlinearly. A key innovation of our approach lies in the design of the adaptive timestep. In \cite{kelly2022adaptive}, the authors proposed a semi-implicit adaptive scheme whose timestep depends only on the drift coefficient, achieving a $1/2 - \varepsilon$ ($\varepsilon \in (0, 1)$) strong convergence rate in the $L^2$-sense. In contrast, our adaptive timestep explicitly accounts for the superlinear growth of both the drift and diffusion coefficients. Furthermore, a backstop truncated EM scheme is seamlessly invoked when the adaptive timestep falls below a prescribed threshold to maintain numerical stability.

From an analytical perspective, establishing moment bounds in the adaptive setting presents a significant challenge. We emphasize that, unlike in the fixed-step framework, the moment boundedness of the continuous-time approximation does not generally imply the moment boundedness of the corresponding discrete-time approximation in the adaptive setting (see Remark \ref{rem-diff}). To overcome this difficulty, we employ Taylor expansions and rigorous discrete-time analysis to derive moment bounds for both the discrete-time numerical solution and its continuous-time extension. Utilizing stopping time arguments, we establish the strong convergence of the numerical scheme in the $L^q$
 -sense ($q > 2$) under a local Lipschitz assumption. Moreover, under an additional polynomial growth condition, we achieve the optimal $1/2$-order strong convergence rate in the $L^q$
 -sense.

Finally, we conduct a comprehensive comparison of the proposed scheme with existing methods, including the tamed EM, truncated EM, backward EM, and other adaptive scheme. Numerical experiments on stiff and nonstiff SDEs, as well as a stochastic Lorenz system, demonstrate that our proposed scheme achieves superior accuracy and better overall performance than the competing methods.
 
The rest of this paper is organized as follows. Section \ref{sec2} gives some preliminary results for the exact solution of SDE \eqref{equation}, and constructs the adaptive time-stepping scheme. Section \ref{sec3} proves the $p$th moment boundedness and establishes the $L^q$-strong convergence over a finite time interval. Section \ref{sec4} obtains the convergence rate. In Section \ref{sec5}, we show that the tamed EM scheme can be employed as the backstop scheme.
Section \ref{sec6} provides several numerical examples comparing the proposed scheme with several schemes to illustrate its numerical performance. Our conclusions and a discussion of possible future work are presented in Section \ref{sec7}.

\section{Preliminaries} \label{sec2}
Let $(\Omega,\mathcal{F},\mathbb{P})$ be a complete probability space with a filtration ${\{ {\mathcal{F}_t}\} _{t \ge 0}}$ satisfying the usual conditions (i.e., it is increasing and right continuous while $\mathcal{F}_0$ contains all $\mathbb{P}$-null sets). If $A$ is a vector or matrix, its transpose is denoted by $A^{\rm T}$. Let $W(t) = (W_1(t), W_2(t),...,W_m(t))^{\rm T} $ be an $m$-dimensional Brownian motion. Let $\left|  \cdot  \right|$ denote both the Euclidean norm in $\mathbb{R}^d$ and the Frobenius norm in $\mathbb{R}^{m \times d}$. Let $\langle { \cdot,  \cdot } \rangle $ denote the inner product of vectors in $\mathbb{R}^d$. For a set $G$, its indicator function is denoted by $\mathcal{I}_G$, namely $\mathcal{I}_G(a) = 1$ if $a \in G$ and $0$ otherwise. 
Denote ${\mathbb{R}_ + } = (0,\infty )$. $\mathbb{Q}$ denotes the set of all rational numbers.

Throughout this paper, we use $C$ to denote a generic positive constant whose value may change from place to place. Furthermore, we use $C_a$ to emphasize the dependence of the generic constant on parameter $a$. We consider a $d$-dimensional stochastic diffusion system described by the stochastic differential equation (SDE) 
\begin{equation}  \label{equation}
	\left\{  
	\begin{array}{l}
		dX(t) = f(X(t))dt + g(X(t))dW(t), \quad t \ge 0, \\
		X(0) = x_0,
	\end{array}
	\right.
\end{equation}
where $W(t)$ is an $m$-dimensional Brownian motion, and the drift and diffusion coefficients $f:{\mathbb{R}^d} \to {\mathbb{R}^d}$, $g:{\mathbb{R}^d} \to {\mathbb{R}^{d \times m}}$. For convenience we impose the following assumptions.

\begin{assumption}  \label{A2.1}
	For any $R > 0$, there exists a positive constant $C_R$ such that
	\begin{equation} \label{llc}
		\left| {f(x) - f(y)} \right| + \left| {g(x) - g(y)} \right| \le {C_R}\left| {x - y} \right|,
	\end{equation}
	for any $x, y \in \mathbb{R}^d$ with $\left| x \right| \vee \left| y \right| \le R$. Furthermore, there exists a pair of constants $p \ge 2$ and $\alpha \ge 0$ such that for any  $x \in \mathbb{R}^d$,
	\begin{equation}  \label{A2.1-1}
		\left\langle {x,f(x)} \right\rangle  + \frac{p-1}{2}{\left| {g(x)} \right|^2} \le \alpha (1 + {\left| x \right|^2}) .
	\end{equation}
\end{assumption}
\begin{remark}  \label{remark1}
	One deduces from \eqref{llc} that for any $R>0$ and $x\in \mathbb{R}^{d}$ with $|x|\leq R$, there exists a constant $C_{R}>0$ such that
	\begin{equation*}
		\begin{split}
			\left| {f(x)} \right| =& \left| {f(x) - f(0) + f(0)} \right| \le \left| {f(x) - f(0)} \right| + \left| {f(0)} \right|  
			\le  {C_R}\left| x \right| + \left| {f(0)} \right|   \le {C_{R}}\left( {1 + \left| x \right|} \right) . \\	
		\end{split}
	\end{equation*}
Similarly, we get ${\left| {g(x)} \right|^2}
		\le {C_{R}}{\left( {1 + \left| x \right|} \right)^2} $.
\end{remark}

Referring to \cite[Theorem 2.3.6]{mao2008stochastic}, SDE \eqref{equation} admits a unique global solution $X(t)$ on $[0,\infty)$ under Assumption \ref{A2.1}. Moreover, the solution $X(t)$  is bounded on every finite time interval. We state these results as the following lemma.

\begin{lemma} (\cite{li2019explicit, mao2008stochastic})  \label{EXt}
Under Assumption \ref{A2.1}, the SDE \eqref{equation} has a unique solution $X(t)$ on $[0,\infty)$ and satisfies
\begin{equation*}
	\mathop {\sup }\limits_{0 \le t \le T} \mathbb{E}{\left| {X(t)} \right|^p} \le C, \quad \forall T > 0.
\end{equation*}
Moreover, for any constant $R > |x_0|$, define
\begin{equation}  \label{sigmaR}
	{\sigma _R} = \inf \{ t \ge 0:\left| {X(t)} \right| \ge R\} .
\end{equation}
Then for any $T > 0$, 
\begin{equation}  \label{PsigmaR}
	\mathbb{P}({\sigma _R} \le T)  \le \frac{{{C}}}{{{R^p}}},
\end{equation}
where $C$ is a generic positive constant depending on $T,p,\alpha$ and $x_0$, but independent of $R$. 
\end{lemma}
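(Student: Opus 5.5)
We argue in three steps: construct a local solution, show it cannot blow up while proving the moment bound, and then read off the exit-time estimate from the same computation. Throughout we work with the Lyapunov function $V(x)=(1+|x|^2)^{p/2}$.

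\textbf{Existence and uniqueness.} By \eqref{llc}, the coefficients are locally Lipschitz. The standard truncation argument therefore applies: for each $k$, replace $f,g$ by globally Lipschitz coefficients that agree with them on $\{|x|\le k\}$. This yields a unique maximal local solution $X(t)$ on $[0,\sigma_\infty)$, where $\sigma_\infty=\lim_{k\to\infty}\sigma_k$ is the explosion time and $\sigma_k$ is defined as in \eqref{sigmaR}. Uniqueness up to each $\sigma_k$ follows from the local Lipschitz bound together with Gronwall's inequality. It then remains to show $\sigma_\infty=\infty$ a.s., and this will come out of the moment estimate.

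\textbf{Generator estimate.} Apply It\^o's formula to $V$. The generator is
\begin{equation*}
\mathcal{L}V(x)=p(1+|x|^2)^{\frac{p-2}{2}}\Big(\langle x,f(x)\rangle+\tfrac12|g(x)|^2\Big)+\tfrac{p(p-2)}{2}(1+|x|^2)^{\frac{p-4}{2}}|x^{\rm T}g(x)|^2 .
\end{equation*}
Using $|x^{\rm T}g(x)|^2\le |x|^2|g(x)|^2\le(1+|x|^2)|g(x)|^2$, the two diffusion terms combine to
\begin{equation*}
\mathcal{L}V(x)\le p(1+|x|^2)^{\frac{p-2}{2}}\Big(\langle x,f(x)\rangle+\tfrac{p-1}{2}|g(x)|^2\Big).
\end{equation*}
By \eqref{A2.1-1}, this gives $\mathcal{L}V(x)\le p\alpha V(x)$. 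This is the main point: the weight $\frac{p-1}{2}$ in \eqref{A2.1-1} is exactly what absorbs the extra It\^o correction coming from $|x|^{p}$.

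\textbf{Stopping and Gronwall.} Evaluate at $t\wedge\sigma_R$. On $[0,\sigma_R]$ the process stays bounded by $R$, so the stochastic integral has zero expectation. Taking expectations,
\begin{equation*}
\mathbb{E}V(X(t\wedge\sigma_R))\le V(x_0)+p\alpha\int_0^t\mathbb{E}V(X(s\wedge\sigma_R))\,ds,
\end{equation*}
and Gronwall's inequality gives $\mathbb{E}V(X(t\wedge\sigma_R))\le V(x_0)e^{p\alpha T}=:C$ for all $t\in[0,T]$. Note that $C$ is independent of $R$.

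\textbf{Conclusions.} All three claims follow from this bound.
\begin{itemize}
\item[(i)] On $\{\sigma_R\le T\}$ we have $|X(\sigma_R)|=R$ by path continuity and $R>|x_0|$. Hence
\begin{equation*}
R^p\,\mathbb{P}(\sigma_R\le T)\le \mathbb{E}V(X(T\wedge\sigma_R))\le C,
\end{equation*}
which is \eqref{PsigmaR}.
\item[(ii)] Letting $R\to\infty$ in (i) gives $\mathbb{P}(\sigma_\infty\le T)=0$ for every $T$. Hence $\sigma_\infty=\infty$ a.s., and the solution is global.
\item[(iii)] Fatou's lemma as $R\to\infty$ gives $\mathbb{E}V(X(t))\le C$. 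Since $|x|^p\le V(x)$, this yields $\sup_{0\le t\le T}\mathbb{E}|X(t)|^p\le C$.
\end{itemize}

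The only delicate step is the generator computation, specifically the handling of the cross term $|x^{\rm T}g|^2$. The remaining steps are routine.
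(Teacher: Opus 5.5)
Your proof is correct and follows the standard Khasminskii--Lyapunov argument behind this lemma. The paper gives no proof of its own and defers to \cite{mao2008stochastic} and \cite{li2019explicit}, which use the same steps: It\^o's formula for $(1+|x|^2)^{p/2}$ with the $\frac{p-1}{2}$ weight absorbing the cross term $|x^{\rm T}g(x)|^2$, stopping at $\sigma_R$, Gronwall, and reading off the bound $R^p\,\mathbb{P}(\sigma_R\le T)\le C$, non-explosion, and the moment bound via Fatou, with $C=(1+|x_0|^2)^{p/2}e^{p\alpha T}$ depending only on $T,p,\alpha,x_0$ as claimed.
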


In the following, we now construct an adaptive time-stepping numerical scheme for \eqref{equation} under Assumption \ref{A2.1}. We first estimate the growth rate of $f ( \cdot )$ and $g ( \cdot )$. Based on Remark \ref{remark1}, one can choose a strictly increasing continuous function $\varphi :[0, \infty) \to [1, \infty)$ such that ${\lim _{r \to  + \infty }}\varphi (r) =  + \infty $ and
\begin{equation}  \label{phi}
	\mathop {\sup }\limits_{\left| x \right| \le r} \left( {\frac{{\left| {f(x)} \right|}}{{1 + \left| x \right|}} \vee \frac{{{{\left| {g(x)} \right|}^2}}}{{{{(1 + \left| x \right|)}^2}}}} \right) \le \varphi (r),  \quad \forall r > 0.
\end{equation}
Denote by ${\varphi ^{ - 1}}( \cdot )$ the inverse function of $\varphi ( \cdot )$. It is clear that ${\varphi ^{ - 1}}:[\varphi (1),\infty ) \to {\mathbb{R}_ + }$  is strictly increasing and continuous. Choose a pair of constants $\hat{\delta}, \check{\delta}$ such that $0 <\hat{\delta} \le \check{\delta }\leq 1$. Define an  adaptive time-stepping function $\delta :{\mathbb{R}^d} \to (0,1]$ by
\begin{equation}  \label{timestep}
	\delta (x) = \hat \delta  \vee \left(\frac{\check{\delta} }{{\varphi \left( {\left| x \right|} \right)}}  \right) .
\end{equation}
Since $\varphi (|x|) \ge 1$ for any $x \in \mathbb{R}^d$, we get  $\hat{\delta}\leq \delta(x)\leq \check{\delta}$ for any $x\in \mathbb{R}^d$. Now, the adaptive EM scheme (ATS) is defined by
\begin{equation}  \label{scheme}
	\left\{  
	\begin{array}{l}
		Y_0 = x_0, \quad t_0 = 0, \\
		\delta_n := \delta(Y_{t_n}), \quad t_{n+1} = t_n + \delta_n, \quad n = 0,1,...,  \\
		{Y_{{t_{n + 1}}}} = {Y_{{t_n}}} + \left\{ {f({Y_{{t_n}}}){\delta _n} + g({Y_{{t_n}}})\Delta {W_n}} \right\}{\mathcal{I}_{\{ \hat \delta  < {\delta _n} \le \check{\delta} \} }}   + U(Y_{{t_n}}, \delta _n){\mathcal{I}_{\{ {\delta _n} = \hat \delta \} }}, \quad n = 0,1,...,
	\end{array}
	\right.
\end{equation}
where $\Delta {W_n} = W({t_{n + 1}}) - W({t_n})$.
Here $U:{\mathbb{R}^d} \times (0,1] \times \mathbb{R}^m \to {\mathbb{R}^d}$ denotes a measurable mapping, referred to as the backstop scheme, which may choose as some existing numerical schemes such as the truncated EM scheme (TEM), the tamed EM scheme (TaEM) and others. If we choose TEM as the backstop scheme, that is,
\begin{equation}\label{eq-tem} 
 	U(Y_{{t_n}}, \delta _n, \Delta{W_n})=  {f_{\hat \delta }({Y_{{t_n}}}){\delta _n} + g_{\hat \delta }({Y_{{t_n}}})\Delta {W_n}}  , \quad {f_{\hat \delta }}(x): = f({\pi _{\hat \delta }}(x)), \quad {g_{\hat \delta }}(x): = g({\pi _{\hat \delta }}(x)), 
\end{equation} 
where the truncated mapping ${\pi _{{\hat \delta }} }:{\mathbb{R}^d} \to {\mathbb{R}^d}$ is defined by
\begin{equation}  \label{pi}
	\begin{split}
		{\pi _{\hat \delta } }(x) = \left( {\left| x \right| \wedge {\varphi ^{ - 1}}\left( {K{\hat \delta }^{ - \gamma}} \right)} \right)\frac{x}{{\left| x \right|}} , \quad \gamma \in (0, 1/3], \quad K \ge \varphi(1),
	\end{split}
\end{equation}
we use the convention $x/|x| = \textbf{0}$ when $x = \textbf{0} \in \mathbb{R}^d$. 
The scheme \eqref{scheme}-\eqref{eq-tem} is called the adaptive EM scheme with backstop truncated EM scheme (ATS-TEM).  As $\hat{\delta} = \check{\delta}$, the scheme \eqref{scheme}-\eqref{eq-tem} degenerates to the TEM scheme with the fixed step. We refer the reader to \cite{yang2022convergence} for more details.

For convenience, define ${n_t} = \max \{ n:{t_n } \le t\}$ for any $t\geq 0$. Based on numerical scheme \eqref{scheme}-\eqref{eq-tem},  we define two versions of the continuous-time numerical solutions by
\begin{equation} \label{continuous}
	\bar Y(t) = Y_{t_{n_t}}, \quad  \forall t \ge 0,
\end{equation}
and 
\begin{equation}  \label{continuous1}
	{Y}(t)= {x_0} + \int_0^t {F(\bar Y(s))ds}  + \int_0^t {G(\bar Y(s))dW(s)} , \quad \forall t \ge 0,
\end{equation}
where 
\begin{equation}  \label{FG}
	\begin{split}
		F(x) =& f(x)\mathcal{I}_{\{ \hat{\delta}  < \delta (x) \le \check{\delta} \}}  + {f_{\hat{\delta}} }(x)\mathcal{I}_{\{ \delta (x) = \hat{\delta} \}} ,~~~~
		G(x) = g(x)\mathcal{I}_{\{ \hat{\delta}  < \delta (x) \le \check{\delta} \}}  + {g_{\hat{\delta}} }(x)\mathcal{I}_{\{ \delta (x) = \hat{\delta} \}}. 
	\end{split}
\end{equation}
Clearly, we observe that $Y({t_{{n_t}}}) = \bar Y({t_{{n_t}}}) = {Y_{{t_{{n_t}}}}}$ for any $t \ge 0$. 

In the following, we see that $F(\cdot)$ and $G(\cdot)$ have some nice properties. On the one hand, for every $x \in \mathbb{R}^d$ satisfying $\hat{\delta}<\delta(x) \le \check{\delta}$, it follows from \eqref{timestep} that 
 $$\varphi (|x|) =\frac{\check{\delta}}{\delta(x)}\leq \frac{\check{\delta}}{\hat{\delta}}:=\rho.$$
Furthermore, according to \eqref{phi} and \eqref{timestep} we have
\begin{equation}  \label{fg}
	\begin{split}
	\left| {f(x)} \right|\mathcal{I}_{\{\hat{\delta}<\delta(x)\leq \check{\delta}\}}\le& \varphi \left( {\left| x \right|} \right)\left( {1 + \left| x \right|} \right)\mathcal{I}_{\{\hat{\delta}<\delta(x)\leq \check{\delta}\}} \le \rho \left( {1 + \left| x \right|} \right). \\
	\end{split}
\end{equation}
Similarly, one can also deduce that
\begin{align}\label{eq2.14}
\left| {g(x)} \right|\mathcal{I}_{\{\hat{\delta}<\delta(x)\leq \check{\delta}\}} \le& {\varphi ^{\frac{1}{2}}}\left( {\left| x \right|} \right)\left( {1 + \left| x \right|} \right)\mathcal{I}_{\{\hat{\delta}<\delta(x)\leq \check{\delta}\}} \le {\rho ^{\frac{1}{2}}}\left( {1 + \left| x \right|} \right).
\end{align}
On the other hand, for every $x \in \mathbb{R}^d$ satisfying ${\delta (x)} = {\hat \delta}$, using \eqref{phi}  and the monotonicity $\varphi (\cdot)$, we obtain
\begin{equation}\label{fglinear}
\begin{aligned}  
		\left| {{f_{\hat \delta }}(x)} \right|\mathcal{I}_{\{{\delta (x)} = {\hat \delta}\}} =& \left| {f({\pi _{\hat \delta }}(x))} \right|\mathcal{I}_{\{{\delta (x)} 
= {\hat \delta}\}} \le \varphi \left( {\left| x \right| \wedge {\varphi ^{ - 1}}(K{{\hat \delta }^{ - \gamma}})} \right)\left( {1 + \left| x \right|} \right)\mathcal{I}_{\{{\delta (x)} = {\hat \delta}\}}  \le  K{{\hat \delta }^{ - \gamma}}\left( {1 + \left| x \right|} \right), \\
\end{aligned}
\end{equation}
and
\begin{equation}\label{eq2.16}
\begin{aligned}
		{\left| {{g_{\hat \delta }}(x)} \right|}\mathcal{I}_{\{{\delta (x)} = {\hat \delta}\}} =& {\left| {g({\pi _{\hat \delta }}(x))} \right|}\mathcal{I}_{\{{\delta (x)} = {\hat \delta}\}} \le \Big[\varphi \left( {\left| x \right| \wedge {\varphi ^{ - 1}}(K{{\hat \delta }^{ - \gamma}})} \right)\Big]^{\frac{1}{2}}{\left( {1 + \left| x \right|} \right)}\mathcal{I}_{\{{\delta (x)} = {\hat \delta}\}}
  \le K{{\hat \delta }^{ - \frac{\gamma}{2}}}{\left( {1 + \left| x \right|} \right)}.
	\end{aligned}
\end{equation}
Thus, combining \eqref{fg}-\eqref{eq2.16}, it follows from \eqref{FG} and the fact $\hat{\delta} \le 1$ that for any $x \in \mathbb{R}^d$, 
\begin{equation}  \label{Flinear}
	\begin{split}
		\left| {F(x)} \right| \le& \left| {f(x){{\cal I}_{\{ \hat \delta  < \delta (x) \le \check{\delta} \} }}} \right| + \left| {{f_{\hat \delta }}(x){{\cal I}_{\{ \delta (x) = \hat \delta \} }}} \right| \le (K + \rho ){{\hat \delta }^{ - \gamma }}\left( {1 + \left| x \right|} \right) , \\
		\left| {G(x)} \right| \le& \left| {g(x){{\cal I}_{\{ \hat \delta  < \delta (x) \le \check{\delta} \} }}} \right| + \left| {{g_{\hat \delta }}(x){{\cal I}_{\{ \delta (x) = \hat \delta \} }}} \right| \le (K + {\rho ^{\frac{1}{2}}}){{\hat \delta }^{ - \frac{\gamma }{2}}}\left( {1 + \left| x \right|} \right) .
	\end{split}
\end{equation}

The following lemma shows that $F( \cdot )$ and $G( \cdot )$ preserve the Khasminskii-type condition \eqref{A2.1-1}.
\begin{lemma}  \label{fgdelta}
Let Assumption \ref{A2.1} hold. Then, one has
\begin{equation} \label{fgdelta1}
	\left\langle {x,F(x)} \right\rangle  + \frac{{p - 1}}{2}{\left| {G(x)} \right|^2} \le {C_1}\left( {1 + {{\left| x \right|}^2}} \right), \quad  \forall x \in \mathbb{R}^d,
\end{equation}
where ${C_1} = \alpha ({2} + 1/{\varphi ^{ - 1}}(K))$.
\end{lemma}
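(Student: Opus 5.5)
Since $\delta(x)$ is either $>\hat\delta$ or $=\hat\delta$, I split into these two cases according to \eqref{FG}.

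\emph{Case 1: $\hat\delta<\delta(x)\le\check\delta$.} Here $F(x)=f(x)$ and $G(x)=g(x)$, so \eqref{A2.1-1} gives the bound with constant $\alpha\le C_1$.

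\emph{Case 2: $\delta(x)=\hat\delta$.} Here $F(x)=f(\pi_{\hat\delta}(x))$ and $G(x)=g(\pi_{\hat\delta}(x))$. Write $R_0:=\varphi^{-1}(K\hat\delta^{-\gamma})$. Since $K\ge\varphi(1)$ and $\hat\delta\le1$, we have $R_0\ge\varphi^{-1}(K)\ge 1$.

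If $|x|\le R_0$, then $\pi_{\hat\delta}(x)=x$, and \eqref{A2.1-1} again gives the bound with constant $\alpha$.

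If $|x|>R_0$, set $y=\pi_{\hat\delta}(x)=R_0x/|x|$, so that $x=(|x|/R_0)\,y$ with $|x|/R_0>1$. Since $(p-1)/2\ge0$ and $|x|/R_0\ge1$,
\begin{equation*}
\langle x,F(x)\rangle+\frac{p-1}{2}|G(x)|^2
=\frac{|x|}{R_0}\langle y,f(y)\rangle+\frac{p-1}{2}|g(y)|^2
\le\frac{|x|}{R_0}\Big(\langle y,f(y)\rangle+\frac{p-1}{2}|g(y)|^2\Big).
\end{equation*}
Applying \eqref{A2.1-1} at $y$, and using $|y|=R_0$, the right-hand side is at most $\frac{|x|}{R_0}\alpha(1+R_0^2)$.

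This step is the main obstacle: we must pull the factor $|x|/R_0$ outside the diffusion term even though that term is not multiplied by it. This works only because $|x|/R_0\ge1$ and the diffusion term is nonnegative; the bracket may be negative, but then the bound holds trivially.

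It remains to bound $\frac{|x|}{R_0}\alpha(1+R_0^2)=\alpha\big(|x|/R_0+|x|R_0\big)$. Using $R_0\ge\varphi^{-1}(K)$ and $R_0<|x|$,
\begin{equation*}
\frac{|x|}{R_0}+|x|R_0\le\frac{|x|}{\varphi^{-1}(K)}+|x|^2.
\end{equation*}
Then $|x|\le 1+|x|^2$ gives $\frac{|x|}{\varphi^{-1}(K)}\le\frac{1}{\varphi^{-1}(K)}(1+|x|^2)$, and trivially $|x|^2\le 2(1+|x|^2)$. Hence the whole expression is at most $\alpha\big(2+1/\varphi^{-1}(K)\big)(1+|x|^2)=C_1(1+|x|^2)$.

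Combining both cases yields \eqref{fgdelta1}.
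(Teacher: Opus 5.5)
Your proof is correct and follows the paper's argument. You use the same case split ($\hat\delta<\delta(x)\le\check\delta$; then $\delta(x)=\hat\delta$ with $|x|\le\varphi^{-1}(K\hat\delta^{-\gamma})$ or $|x|>\varphi^{-1}(K\hat\delta^{-\gamma})$) and the same final estimate $\frac{\alpha|x|}{R_0}(1+R_0^2)\le\frac{\alpha|x|}{\varphi^{-1}(K)}+\alpha|x|^2$. The only difference is that you prove the scaling inequality in the last case directly, via $x=(|x|/R_0)\pi_{\hat\delta}(x)$ and the nonnegativity of the diffusion term, whereas the paper cites \cite[Lemma 2.4]{mao2015the} for it.
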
 
\begin{proof}
It follows from \eqref{FG} that
\begin{equation}  \label{kl8}
	\begin{split}
		&\left\langle {x,F(x)} \right\rangle  + \frac{{p - 1}}{2}{\left| {G(x)} \right|^2} \\
		=& \left\langle {x,f(x){\mathcal{I}_{\{ \hat{\delta}  < \delta (x) \le \check{\delta} \} }} + {f_{\hat \delta }}(x){\mathcal{I}_{\{ \delta (x) = \hat \delta \} }}} \right\rangle  + \frac{{p - 1}}{2}{\left| {g(x){\mathcal{I}_{\{ \hat{\delta}  < \delta (x) \le \check{\delta} \} }} + {g_{\hat \delta }}(x){\mathcal{I}_{\{ \delta (x) = \hat \delta \} }}} \right|^2} \\
		=& \left[ {\left\langle {x,f(x)} \right\rangle  + \frac{{p - 1}}{2}{{\left| {g(x)} \right|}^2}} \right]{\mathcal{I}_{\{ \hat{\delta}  < \delta (x) \le \check{\delta} \} }} + \left[ {\left\langle {x,{f_{\hat \delta }}(x)} \right\rangle  + \frac{{p - 1}}{2}{{\left| {{g_{\hat \delta }}(x)} \right|}^2}} \right]{{\cal I}_{\{ \delta (x) = \hat \delta \} }} \\
		&+ (p - 1)tr(g(x){g_{\hat \delta }}{(x)^{\rm T}}){\mathcal{I}_{\{ \hat{\delta}  < \delta (x) \le \check{\delta} \} }}{{\cal I}_{\{ \delta (x) = \hat \delta \} }} .
	\end{split}
\end{equation}
Note that
\begin{equation}  \label{eqq2.20}
	{\mathcal{I}_{\{ \hat{\delta}  < \delta (x) \le \check{\delta} \} }}{{\cal I}_{\{ \delta (x) = \hat \delta \} }} = 0.
\end{equation}
For any $x \in \mathbb{R}^d$ with $\left| x \right| \le {\varphi ^{ - 1}}(K{\hat{\delta} ^{ - \gamma}})$, from \eqref{pi} we have $f(x) = f({\pi _{\hat \delta }}(x)) = {f_{\hat \delta }}(x)$ and $g(x) = g({\pi _{\hat \delta }}(x)) = {g_{\hat \delta }}(x)$. Therefore, it follows from \eqref{A2.1-1} that
\begin{equation}\label{eq2.20}
\begin{aligned}
	\left[ {\left\langle {x,{f_{\hat \delta }}(x)} \right\rangle  + \frac{{p - 1}}{2}{{\left| {{g_{\hat \delta }}(x)} \right|}^2}} \right]{\mathcal{I}_{\{ \left| x \right| \le {\varphi ^{ - 1}}(K{{\hat \delta }^{ - \gamma }})\} }} 
	 \leq  \alpha (1 + {\left| x \right|^2}).
\end{aligned}
\end{equation}
For any $x \in \mathbb{R}^d$ with $\left| x \right| > {\varphi ^{ - 1}}(K{{\hat{\delta}} ^{ - \gamma}})$, using \cite[Lemma 2.4]{mao2015the} yields
\begin{equation}  \label{fgdelta2}
\begin{split}
	&\Big[\left\langle {x,{f_{\hat{\delta}} }(x)} \right\rangle  + \frac{p - 1}{2}{\left| {{g_{\hat{\delta}} }(x)} \right|^2}\Big]{\mathcal{I}_{\{ \left| x \right| > {\varphi ^{ - 1}}(K{{\hat \delta }^{ - \gamma }})\} }} \\
	\le& \frac{{\alpha \left| x \right|}}{{{\varphi ^{ - 1}}(K{{\hat{\delta}} ^{ - \gamma}})}}\left[ {1 + {{\left( {{\varphi ^{ - 1}}(K{{\hat{\delta}} ^{ - \gamma}})} \right)}^2}} \right] 
	\le \left[ {\frac{{\alpha \left| x \right|}}{{{\varphi ^{ - 1}}(K{{\hat \delta }^{ - \gamma }})}} + \alpha {{\left| x \right|}^2}} \right].
\end{split}
\end{equation}
Combining \eqref{eq2.20} and \eqref{fgdelta2}, using the facts ${{\varphi ^{ - 1}}}(\cdot)$ is a strictly increasing function and $0 < \hat{\delta} \le 1$, one sees
\begin{equation}\label{eqqq2.24}
\begin{aligned}
    &{\left\langle {x,{f_{\hat \delta }}(x)} \right\rangle  + \frac{{p - 1}}{2}{{\left| {{g_{\hat \delta }}(x)} \right|}^2}} \leq {\alpha (1 + {{\left| x \right|}^2}) + \frac{{\alpha \left| x \right|}}{{{\varphi ^{ - 1}}(K{{\hat \delta }^{ - \gamma }})}} + \alpha {{\left| x \right|}^2}} \\
    \le& \alpha \left( {{2} + \frac{1}{{{\varphi ^{ - 1}}(K{\hat{\delta} ^{ - \gamma }})}}} \right)(1 + {\left| x \right|^2}) 
    \le \alpha \left( {{2} + \frac{1}{{{\varphi ^{ - 1}}(K)}}} \right)(1 + {\left| x \right|^2}).
\end{aligned}
\end{equation}
Furthermore, inserting \eqref{eqq2.20} and \eqref{eqqq2.24} into \eqref{kl8}, and using \eqref{A2.1-1} yields
\begin{equation*}
\begin{split}
	&\left\langle {x,F(x)} \right\rangle  + \frac{{p - 1}}{2}{\left| {G(x)} \right|^2}\\
	\le& \left[ {\left\langle {x,f(x)} \right\rangle  + \frac{{p - 1}}{2}{{\left| {g(x)} \right|}^2}} \right]{{\cal I}_{\{ \hat \delta  < \delta (x) \le \check{\delta} \} }} + \alpha \left( {{2} + \frac{1}{{{\varphi ^{ - 1}}(K)}}} \right)(1 + {\left| x \right|^2}){{\cal I}_{\{ \delta (x) = \hat \delta \} }} \\
	\le& \alpha (1 + {\left| x \right|^2}){{\cal I}_{\{ \hat \delta  < \delta (x) \le \check{\delta} \} }} + \alpha \left( {{2} + \frac{1}{{{\varphi ^{ - 1}}(K)}}} \right)(1 + {\left| x \right|^2}){{\cal I}_{\{ \delta (x) = \hat \delta \} }} \\
	\le& \alpha \left( {{2} + \frac{1}{{{\varphi ^{ - 1}}(K)}}} \right)(1 + {\left| x \right|^2}) .
\end{split}
\end{equation*} 
The proof is therefore complete.
\end{proof}

\begin{remark}
For any $T>0$, a positive lower bound on the timestep $\hat{\delta} > 0$ guarantees that the time horizon $T$ is almost surely attainable, i.e., $\mathbb{P}(\omega: \exists M(\omega ) < \infty \; \text{s.t.} \; {t_{M(\omega )}} \ge T) = 1$ (see \cite[Theorem 1]{fang2020adaptive}), since for almost all $\omega \in \Omega$, there exists a positive integer $M(\omega) < \infty$ such that
\begin{align*}
	t_{M(\omega)} =	\sum_{n=0}^{M(\omega)-1}\delta_{n}\geq M(\omega) \hat{\delta} \geq T.
\end{align*}
\end{remark}

\begin{remark}   \label{rem-2}
Note that the stochastic time $t_n$ defined in \eqref{scheme} is a $\{ {{\cal F}_t}\} $-stopping time, i.e., $\{ {t_n} \le t\}  \in {\mathcal{F}_t}$ (see e.g., \cite{liu2017almost}). We then define a filtration at stopping time $t_n$ by
\begin{equation*} 
	{\mathcal{F}_{t_n} } = \{ A \in \mathcal{F}:A \cap \{\omega : {t_n(\omega)}  \le t\}  \in {\mathcal{F}_t} \; \text{for all} \; t \ge 0\} .
\end{equation*}
Hence, $\delta_n$ is $\mathcal{F}_{t_n}$-measurable, and $t_{n+1}$ is a $\mathcal{F}_{t_n}$-measurable random variable. 
\end{remark}

\begin{remark}   \label{remark-1}
Notice that each Brownian increment $W({t_{n + 1}}) - W({t_n})$ depends on the adaptive timestep $\delta_n = {t_{n + 1}} - {t_n}$, and from \eqref{scheme} we see ${\delta _n} = {\delta }({Y_{t_n}})$ is determined by $Y_{t_n}$. Hence, in general, $W({t_{n + 1}}) - W({t_n})$ is not independent of $\mathcal{F}_{t_n}$. Nevertheless, $W(t_{n+1}) - W(t_n)$ is $\mathcal{F}_{t_n}$-conditionally normally distributed (see \cite[Remark 2.2]{kelly2018adaptive}). Since $\delta_n$ is an $ {{\cal F}_{{t_n}}} $-measurable random variable and $W(t)$ is an $m$-dimensional Brownian motion, $W(t)$ and $W^2(t) - mt$ are two classical ${\{ {\mathcal{F}_t}\} _{t \ge 0}}$-adapted martingales. By applying Doob's martingale stopping theorem (see e.g., \cite[Theorem 1.3.3]{mao2008stochastic}),
		\begin{equation*} 
			\begin{split}
				\mathbb{E}(W({t_{n + 1}}) - W({t_n})|{\mathcal{F}_{{t_n}}}) = \mathbb{E}(W({t_{n + 1}})|{\mathcal{F}_{{t_n}}}) - W({t_n}) = \mathbf{0} \quad a.s.
			\end{split}
		\end{equation*}
		and
		 \begin{equation*}
		 	\mathbb{E}\left( {{W^2}({t_{n + 1}}) - m{t_{n + 1}}|{{\cal F}_{{t_n}}}} \right) = {W^2}({t_n}) - m{t_n} \quad a.s.
		 \end{equation*}
		which implies
		\begin{equation*}
			\begin{split}
				&\mathbb{E}(|W({t_{n + 1}}) - W({t_n}){|^2} - m{\delta _n}|{\mathcal{F}_{{t_n}}}) \\
				=& \mathbb{E}({W^2}({t_{n + 1}}) - 2\left\langle {W({t_{n + 1}}),W({t_n})} \right\rangle  + {W^2}({t_n}) - m{t_{n + 1}} + m{t_n}|{{\cal F}_{{t_n}}})\\
				=& \mathbb{E}({W^2}({t_{n + 1}}) - m{t_{n + 1}}|{{\cal F}_{{t_n}}}) - 2\mathbb{E}(\left\langle {W({t_{n + 1}}),W({t_n})} \right\rangle |{\mathcal{F}_{{t_n}}}) + \mathbb{E}({W^2}({t_n}) - m{t_n}|{{\cal F}_{{t_n}}}) + 2m{t_n}\\
				=& \mathbf{0} \quad a.s.
			\end{split}
		\end{equation*}
		The Brownian increment $W(t_{n+1}) -W(t_n)$ has the same distribution as the random variable $\sqrt{\delta_n} \chi$, $\chi  \sim N(0,{I_{m \times m}})$, where $I_{m \times m}$ is the identity matrix (see \cite[Lemma 4.5]{botija2024explicit}). Therefore, for any $i \ge 1$,  $A \in \mathbb{R}^{ d}$ and $B \in \mathbb{R}^{d \times m}$, we have
		\begin{equation} \label{kl7}
			\begin{split}
				&\mathbb{E}\left[ {{{\left( {\left\langle {A,B\Delta {W_n}} \right\rangle } \right)}^{2i - 1}}|{{\cal F}_{{t_n}}}} \right] = 0 \quad a.s., \\
				&\mathbb{E}\left[ {{{\left( {\left\langle {A,B\Delta {W_n}} \right\rangle } \right)}^{2i}}|{{\cal F}_{{t_n}}}} \right] \le {\left| A \right|^{2i}}{\left| B \right|^{2i}}\mathbb{E}\left[ {{{\left| {\Delta {W_n}} \right|}^{2i}}|{{\cal F}_{{t_n}}}} \right] \\
				&= {\left| A \right|^{2i}}{\left| B \right|^{2i}}\mathbb{E}\left( {{{\left| {\sqrt {{\delta _n}} \chi } \right|}^{2i}}|{\mathcal{F}_{{t_n}}}} \right) = {\left| A \right|^{2i}}{\left| B \right|^{2i}}\delta _n^i\mathbb{E}\left( {{{\left| \chi  \right|}^{2i}}} \right) \le {\left| A \right|^{2i}}{\left| B \right|^{2i}}(2i - 1)!!{m^i}\delta _n^i \quad a.s.
			\end{split}
		\end{equation}	
	\end{remark}

\section{Convergence}  \label{sec3}

To establish the strong convergence of the ATS-TEM \eqref{scheme}-\eqref{eq-tem}, we first give the $p$th moment boundedness of the numerical solutions.

\subsection{Moment boundedness of the numerical solutions}
\begin{theorem}  \label{th2.1}
	Let Assumption \ref{A2.1} hold. For any $T>0$, there exists a positive constant $C$ such that 
	\begin{equation}  \label{kl5}
		\mathop {\sup }\limits_{0 \le t \le T} \mathbb{E}\left| {\bar Y(t)} \right|^p \le C  \quad  \text{and}  \quad  \mathop {\sup }\limits_{0 \le t \le T} \mathbb{E}\left| {Y(t)} \right|^p \le C ,
	\end{equation}
where $p$ is given in Assumption \ref{A2.1}.
\end{theorem}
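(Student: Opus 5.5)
The plan is to work on the discrete chain $\{Y_{t_n}\}$ first and derive the bound for $Y(t)$ from it afterwards. The remark in the introduction warns that continuous-time bounds do not transfer to the discrete chain in the adaptive setting, so the argument has to run in this order. Write $Y_{t_{n+1}} = Y_{t_n} + F(Y_{t_n})\delta_n + G(Y_{t_n})\Delta W_n$, which holds by \eqref{FG} in both regimes. Set
$$\Theta_n := \frac{2\langle Y_{t_n}, F\delta_n + G\Delta W_n\rangle + |F\delta_n + G\Delta W_n|^2}{1+|Y_{t_n}|^2},$$
so that $1+|Y_{t_{n+1}}|^2 = (1+|Y_{t_n}|^2)(1+\Theta_n)$ and hence $(1+|Y_{t_{n+1}}|^2)^{p/2} = (1+|Y_{t_n}|^2)^{p/2}(1+\Theta_n)^{p/2}$. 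For $p/2 \ge 1$ we use the Taylor bound
$$(1+u)^{p/2} \le 1 + \tfrac{p}{2}u + \tfrac{p(p-2)}{8}u^2 + \sum_{k=3}^{\lceil p\rceil+1} C_p |u|^k ,$$
valid for $u \ge -1$. We then take the conditional expectation $\mathbb{E}[\,\cdot \mid \mathcal{F}_{t_n}]$, using that $\delta_n$ is $\mathcal{F}_{t_n}$-measurable (Remark \ref{rem-2}) and the conditional Gaussian moment identities \eqref{kl7} of Remark \ref{remark-1}.

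The key step is to show
$$\mathbb{E}\bigl[(1+\Theta_n)^{p/2} \mid \mathcal{F}_{t_n}\bigr] \le 1 + C\delta_n \quad a.s.,$$
with $C$ independent of $\hat\delta$ and $n$.
\begin{itemize}
\item First-order term: $\mathbb{E}[\Theta_n \mid \mathcal{F}_{t_n}] = (1+|Y|^2)^{-1}\bigl(2\langle Y,F\rangle\delta_n + |G|^2 m\delta_n\bigr) + |F|^2\delta_n^2/(1+|Y|^2)$.
\item Second-order term: the leading part of $\frac{p-2}{4}\mathbb{E}[\Theta_n^2 \mid \mathcal{F}_{t_n}]$ is $(p-2)\,\mathbb{E}\langle Y, G\Delta W_n\rangle^2/(1+|Y|^2)^2 \le (p-2)|G|^2\delta_n/(1+|Y|^2)$.
\item Combining these two, the $\delta_n$-terms become $\frac{p}{2}\delta_n\bigl[2\langle Y,F\rangle + (p-1)|G|^2\bigr]/(1+|Y|^2)$ (up to the harmless factor $m$, absorbed by the Frobenius bound), and Lemma \ref{fgdelta} bounds this by $pC_1\delta_n$.
\end{itemize}
Everything else is remainder terms of the form $\delta_n^{j}|F|^{a}|G|^{b}/(1+|Y|^2)^{c}$ of higher total order, and these must be shown to be $O(\delta_n)$. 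In the adaptive regime $\hat\delta<\delta_n$, the bounds \eqref{fg}--\eqref{eq2.14} are too crude, since they carry the constant $\rho$, which blows up. Instead we use $\varphi(|Y|) = \check\delta/\delta_n$, so that $|F|\delta_n \le \check\delta(1+|Y|)$ and $|G|^2\delta_n \le \check\delta(1+|Y|)^2$. Each extra power of $|F|\delta_n/(1+|Y|)$ or $|G|\sqrt{\delta_n}/(1+|Y|)$ is therefore bounded by a constant, and every remainder term retains at least one factor $\delta_n$ (from $|G|^2\delta_n$ or $|F|\delta_n$ times a bounded factor). In the backstop regime $\delta_n=\hat\delta$, \eqref{Flinear} gives $|F|\le C\hat\delta^{-\gamma}(1+|Y|)$ and $|G|\le C\hat\delta^{-\gamma/2}(1+|Y|)$, so the higher terms are $O(\hat\delta^{2-2\gamma}+\hat\delta^{1-\gamma}\cdot\hat\delta^{1-\gamma}+\dots)$. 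Since $\gamma\le 1/3$, each of these is $\le C\hat\delta$. A typical case is $|G|^4\hat\delta^2 \lesssim \hat\delta^{2-2\gamma}(1+|Y|)^4$, and $2-2\gamma \ge 4/3 > 1$; the worst cross term is $|F|\delta\,|G|^2\delta\sim\hat\delta^{2-2\gamma}$. I expect this bookkeeping, making sure every remainder has enough powers of $\delta_n$ in both regimes uniformly, to be the main obstacle. Odd moments vanish by \eqref{kl7}, which removes the dangerous terms that are linear in $\Delta W_n$.

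From the key step, $\mathbb{E}[(1+|Y_{t_{n+1}}|^2)^{p/2}\mid\mathcal{F}_{t_n}] \le (1+|Y_{t_n}|^2)^{p/2}(1+C\delta_n) \le (1+|Y_{t_n}|^2)^{p/2}e^{C\delta_n}$. Because the step sizes are random, we do not iterate directly. Instead we observe that $M_n := e^{-Ct_n}(1+|Y_{t_n}|^2)^{p/2}$ is a nonnegative supermartingale with respect to $\mathcal{F}_{t_n}$. Consider $n_T$, or rather the bounded index $N_T := \lceil T/\hat\delta\rceil \ge n_T + 1$. For any $t\le T$, since $\{n_t = n\}$ lies in $\mathcal{F}_{t_n}$ (it equals $\{t_n\le t<t_n+\delta_n\}$), optional stopping at the bounded stopping index $n_t\wedge N_T$ gives $\mathbb{E}[e^{-Ct_{n_t}}(1+|Y_{t_{n_t}}|^2)^{p/2}] \le (1+|x_0|^2)^{p/2}$. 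Since $t_{n_t}\le T$, this yields $\sup_{t\le T}\mathbb{E}|\bar Y(t)|^p \le e^{CT}(1+|x_0|^2)^{p/2}$.

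For $Y(t)$, note that $Y(t) = \bar Y(t) + F(\bar Y(t))(t-t_{n_t}) + G(\bar Y(t))(W(t)-W(t_{n_t}))$ with $0\le t-t_{n_t}<\delta(\bar Y(t))$. The same per-step estimate applies with $\delta_n$ replaced by the smaller interval $t - t_{n_t}$, after conditioning on $\mathcal{F}_{t_{n_t}}$; the bounds $|F|(t-t_n)\le|F|\delta_n$, and similarly for $G$, keep all remainders controlled. Alternatively, one can bound crudely with the Burkholder--Davis--Gundy inequality: $\mathbb{E}|Y(t)|^p \le C\,\mathbb{E}\bigl[|\bar Y|^p + (|F|\delta_n)^p + (|G|^2\delta_n)^{p/2}\bigr]$, and each term is $\le C(1+|\bar Y|)^p$ by the regime-wise estimates above. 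Together with the bound for $\bar Y$, this gives \eqref{kl5}.
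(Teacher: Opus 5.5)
Your architecture is the paper's: a one-step conditional Taylor estimate for $(1+|Y_{t_n}|^2)^{p/2}$ closed by Lemma~\ref{fgdelta}, then passage to the random index $n_t$, then $Y(t)$ from $\bar Y(t)$. Your treatment of the random index, via the supermartingale $e^{-Ct_n}(1+|Y_{t_n}|^2)^{p/2}$ and optional stopping at the bounded stopping index $n_t$, is more rigorous than the paper's informal ``iterate \eqref{a4} $n_t$ times''.

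However, the key step is false as stated: $\mathbb{E}[(1+\Theta_n)^{p/2}\mid\mathcal{F}_{t_n}]\le 1+C\delta_n$ cannot hold with $C$ independent of $\hat\delta$. In the adaptive regime you only have $|F|\delta_n\le\check\delta(1+|Y_{t_n}|)$ and $|G|^2\delta_n\le\check\delta(1+|Y_{t_n}|)^2$. These give factors of $\check\delta$, not of $\delta_n$. Hence remainders such as $|F|^2\delta_n^2/(1+|Y_{t_n}|^2)$, $\langle Y_{t_n},F\rangle^2\delta_n^2/(1+|Y_{t_n}|^2)^2$ and $|G|^4\delta_n^2/(1+|Y_{t_n}|^2)^2$ are only $O(\check\delta^2)$, and your absolute-value cubic term is $O(\check\delta^{3/2})$. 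When $\delta_n$ is close to $\hat\delta$, none of them is $O(\delta_n)$ uniformly.

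Here is a concrete counterexample. Take $d=2$, $g\equiv 0$, $f(x)=|x|^2Jx$ with $J$ the rotation by $\pi/2$, and $\varphi(r)=1+r^2$. Assumption~\ref{A2.1} holds because $\langle x,f(x)\rangle=0$, and in the adaptive regime one step gives exactly
\[
1+|Y_{t_{n+1}}|^2=(1+|Y_{t_n}|^2)\Bigl(1+\frac{\check\delta^2|Y_{t_n}|^6}{(1+|Y_{t_n}|^2)^3}\Bigr).
\]
Take a state $Y_{t_n}=y$ with $1+|y|^2=\rho/2$ (e.g.\ $n=0$, $x_0=y$). There the factor is about $1+\check\delta^2$ while $\delta_n=2\hat\delta$, which forces $C\gtrsim\check\delta^2/\hat\delta=\rho\check\delta$. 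So the $\hat\delta$-uniform bound $e^{CT}(1+|x_0|^2)^{p/2}$ you conclude is not established; it is stronger than the paper's $e^{C\rho T}$.

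The repair is easy and leaves the theorem intact, since its constant may depend on $\hat\delta$ and $\check\delta$. Because $\delta_n\ge\hat\delta$, every adaptive-regime remainder is at most $C_0\check\delta^{3/2}\le C_0\rho\sqrt{\check\delta}\,\delta_n$. In the backstop regime, your bookkeeping via \eqref{fglinear} gives $O(\hat\delta)$ with constants independent of $\rho$. With your $|u|^k$ remainder the odd moments do not vanish, but the cubic term is $O(\hat\delta^{3(1-\gamma)/2})=O(\hat\delta)$, exactly because $\gamma\le 1/3$.

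So the key step holds with $C=C_0(1+\rho\sqrt{\check\delta})$, and your optional-stopping argument yields $\sup_{t\le T}\mathbb{E}|\bar Y(t)|^p\le e^{C_0(1+\rho\sqrt{\check\delta})T}(1+|x_0|^2)^{p/2}$. This is $\rho$-dependent like the paper's $e^{C\rho T}$, and like it is uniform as $\check\delta\to0$ with $\rho$ fixed. The dependence is milder, because you keep the factor $\delta_n$ on the Khasminskii term, where the paper bounds it by $\check\delta$ and then pays for $n_t\le T/\hat\delta$ steps. Your bound for $Y(t)$ is fine as written.

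One small correction: there is no factor $m$ in the first-order term, since $\mathbb{E}[|G\Delta W_n|^2\mid\mathcal{F}_{t_n}]=|G|^2\delta_n$ exactly in the Frobenius norm. A genuine factor $m$ would not be harmless, as it would destroy the match with the coefficient $p-1$ in \eqref{A2.1-1}.
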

\begin{proof}
For any integer $n \ge 0$, from \eqref{scheme}, \eqref{eq-tem} and \eqref{FG} we have
\begin{equation*}
\begin{split}
	{\left| {{Y_{{t_{n + 1}}}}} \right|^2} =& {\left| {{Y_{{t_n}}} + F({Y_{{t_n}}}){\delta _n} + G({Y_{{t_n}}})\Delta {W_n}} \right|^2} \\
	=& {\left| {{Y_{{t_n}}}} \right|^2} + 2\left\langle {{Y_{{t_n}}},F({Y_{{t_n}}})} \right\rangle {\delta _n} + {\left| {F({Y_{{t_n}}})} \right|^2}{\left| {{\delta _n}} \right|^2} + {\left| {G({Y_{{t_n}}})\Delta {W_n}} \right|^2} \\
	&+ 2\left\langle {{Y_{{t_n}}} + F({Y_{{t_n}}}){\delta _n},G({Y_{{t_n}}})\Delta {W_n}} \right\rangle .
\end{split}
\end{equation*}
We therefore get
\begin{equation*}
\begin{split}
	{\left( {1 + {{\left| {{Y_{{t_{n + 1}}}}} \right|}^2}} \right)^{\frac{p}{2}}} = {\left( {1 + {{\left| {{Y_{{t_n}}}} \right|}^2}} \right)^{\frac{p}{2}}}{\left( {1 + {\xi _n}} \right)^{\frac{p}{2}}},
\end{split}
\end{equation*}
where 
\begin{equation}  \label{xii}
	{\xi _n} = \frac{{2\left\langle {{Y_{{t_n}}},F({Y_{{t_n}}})} \right\rangle {\delta _n} + {{\left| {F({Y_{{t_n}}})} \right|}^2}\delta _n^2 + {{\left| {G({Y_{{t_n}}})\Delta {W_n}} \right|}^2} + 2\left\langle {{Y_{{t_n}}} + F({Y_{{t_n}}}){\delta _n},G({Y_{{t_n}}})\Delta {W_n}} \right\rangle }}{{1 + {{\left| {{Y_{{t_n}}}} \right|}^2}}} .
\end{equation}
For the given constant $p \ge 2$, choose a non-negative integer $k$ such that $2k < p \le 2(k+1)$. Using Remark \ref{rem-2} and \cite[Lemma 3.3]{yang2018explicit} one gets
\begin{equation}  \label{sum1}
	\begin{split}
		&\mathbb{E}\left[ {{{\left( {1 + {{\left| {{Y_{{t_{n + 1}}}}} \right|}^2}} \right)}^{\frac{p}{2}}}|{\mathcal{F}_{{t_n}}}} \right] \\
		\le& {\left( {1 + {{\left| {{Y_{{t_n}}}} \right|}^2}} \right)^{\frac{p}{2}}}\left[ {1 + \frac{p}{2}\mathbb{E}\left( {{\xi _n}|{\mathcal{F}_{{t_n}}}} \right) + \frac{{p(p - 2)}}{8}\mathbb{E}\left( {\xi _n^2|{\mathcal{F}_{{t_n}}}} \right) + \mathbb{E}\left( {\xi _n^3{P_k}({\xi _n})|{\mathcal{F}_{{t_n}}}} \right)} \right] \quad a.s.
	\end{split}
\end{equation}
where $P_k(\cdot)$ represents a $k$th-order polynomial whose coefficients depend only on $p$. Using \eqref{kl7}, \eqref{Flinear}, Remark \ref{remark-1}, the facts $ \delta_n \le \check{\delta}$ and $\rho = \check{\delta}/\hat{\delta}$ we have
\begin{equation}  \label{xi1}
	\begin{split}  
		&\mathbb{E}\left( {\xi _n|{{\cal F}_{{t_n}}}} \right) \\
		=& {\left( {1 + {{\left| {{Y_{{t_n}}}} \right|}^2}} \right)^{ - 1}}\mathbb{E}\Big[ {2\left\langle {{Y_{{t_n}}},F({Y_{{t_n}}})} \right\rangle {\delta _n} + {{\left| {F({Y_{{t_n}}})} \right|}^2}\delta _n^2 + {{\left| {G({Y_{{t_n}}})\Delta {W_n}} \right|}^2}}  \\
		&  { + 2\left\langle {{Y_{{t_n}}} + F({Y_{{t_n}}}){\delta _n},G({Y_{{t_n}}})\Delta {W_n}} \right\rangle |{{\cal F}_{{t_n}}}} \Big] \\
		=& {\left( {1 + {{\left| {{Y_{{t_n}}}} \right|}^2}} \right)^{ - 1}}\left[ {2\left\langle {{Y_{{t_n}}},F({Y_{{t_n}}})} \right\rangle  + {{\left| {G({Y_{{t_n}}})} \right|}^2}} \right]{\delta _n} + {\left( {1 + {{\left| {{Y_{{t_n}}}} \right|}^2}} \right)^{ - 1}}{\left| {F({Y_{{t_n}}})} \right|^2}\delta _n^2 \\
		\le& {\left( {1 + {{\left| {{Y_{{t_n}}}} \right|}^2}} \right)^{ - 1}}\left[ {2\left\langle {{Y_{{t_n}}},F({Y_{{t_n}}})} \right\rangle  + {{\left| {G({Y_{{t_n}}})} \right|}^2}} \right]{\delta _n} + C{\check{\delta} ^{2(1 - \gamma )}} \quad a.s.
	\end{split}
\end{equation}
Similarly, using \eqref{Flinear}, \eqref{kl7} and Remark \ref{remark-1}, one computes
\begin{equation}   \label{xi2}
	\begin{split}
		\mathbb{E}\left( {\xi _n^2|{{\cal F}_{{t_n}}}} \right) \le& 4{\left( {1 + {{\left| {{Y_{{t_n}}}} \right|}^2}} \right)^{ - 2}}{\left| {Y_{{t_n}}^{\rm{T}}G({Y_{{t_n}}})} \right|^2}{\delta _n} + C{\check{\delta} ^{2(1-\gamma)}} \\
		\le& 4{\left( {1 + {{\left| {{Y_{{t_n}}}} \right|}^2}} \right)^{ - 1}}{\left| {G({Y_{{t_n}}})} \right|^2}{\delta _n} + C{\check{\delta} ^{2(1-\gamma)}} \quad a.s. 
	\end{split}
\end{equation}
and
\begin{equation*}
	\mathbb{E}\left( {\xi _n^3{P_k}({\xi _n})|{\mathcal{F}_{{t_n}}}} \right) \le C{\check{\delta} ^{2(1 - \gamma )}} \quad  a.s.
\end{equation*}
Substituting \eqref{xi1} and \eqref{xi2} into \eqref{sum1}, using Assumption \ref{A2.1} and $\gamma \in (0, 1/3]$ yields
\begin{equation}  \label{a4} 
\begin{split}
	&\mathbb{E}\left[ {{{\left( {1 + {{\left| {{Y_{{t_{n + 1}}}}} \right|}^2}} \right)}^{\frac{p}{2}}}|{\mathcal{F}_{{t_n}}}} \right] \\
	\le& \left( {1 + C{\check{\delta} ^{2(1-\gamma)}}} \right){\left( {1 + {{\left| {{Y_{{t_n}}}} \right|}^2}} \right)^{\frac{p}{2}}} + \frac{{p{\delta _n}}}{2}{\left( {1 + {{\left| {{Y_{{t_n}}}} \right|}^2}} \right)^{\frac{p}{2} - 1}}\left( {2\left\langle {{Y_{{t_n}}},F({Y_{{t_n}}})} \right\rangle  + (p - 1){{\left| {G({Y_{{t_n}}})} \right|}^2}} \right)  \\
	\le& (1 + C\check{\delta} ){\left( {1 + {{\left| {{Y_{{t_n}}}} \right|}^2}} \right)^{\frac{p}{2}}}  \quad a.s.
\end{split}
\end{equation}
For any $t \ge 0$, by the definition of $n_t$, \eqref{a4} then gives
	\begin{equation}  \label{mn1}
		\mathbb{E}\left[ {{{\left( {1 + {{\left| {{Y_{{t_{{n_t} + 2}}}}} \right|}^2}} \right)}^{\frac{p}{2}}}|{\mathcal{F}_{{t_{{n_t} + 1}}}}} \right] \le (1 + C\check{\delta} ){\left( {1 + {{\left| {{Y_{{t_{{n_t} + 1}}}}} \right|}^2}} \right)^{\frac{p}{2}}} \quad a.s.
	\end{equation}
Taking conditional expectations on both sides of the above inequality and using \eqref{a4} one gets
	\begin{equation*}  
		\begin{split}
			&\mathbb{E}\left[ {\mathbb{E}\left( {{{\left( {1 + {{\left| {{Y_{{t_{{n_t}+2}}}}} \right|}^2}} \right)}^{\frac{p}{2}}}|{{\cal F}_{{t_{{n_t} + 1}}}}} \right)|{{\cal F}_{{t_{{n_t}}}}}} \right] \\
			\le& (1 + C\check{\delta} )\mathbb{E}\left[ {{{\left( {1 + {{\left| {{Y_{{t_{{n_t} + 1}}}}} \right|}^2}} \right)}^{\frac{p}{2}}}|{{\cal F}_{{t_{{n_t}}}}}} \right] \le (1 + C\check{\delta} )^2{\left( {1 + {{\left| {{Y_{{t_{{n_t}}}}}} \right|}^2}} \right)^{\frac{p}{2}}}\quad a.s.
		\end{split}
	\end{equation*}
	Since ${\mathcal{F}_{{t_{n_t }}}} \subset {\mathcal{F}_{{t_{n_t + 1}}}}$, the above inequality implies that 
	\begin{equation*}
		\mathbb{E}\left[ {{{\left( {1 + {{\left| {{Y_{{t_{{n_t}+2}}}}} \right|}^2}} \right)}^{\frac{p}{2}}}|{{\cal F}_{{t_{{n_t}}}}}} \right] \le (1 + C\check{\delta} )^2{\left( {1 + {{\left| {{Y_{{t_{{n_t} }}}}} \right|}^2}} \right)^{\frac{p}{2}}} \quad a.s. 
	\end{equation*}
	Iterating the above argument and repeatedly applying \eqref{a4}, we obtain that 
	\begin{equation*}
		\mathbb{E}\left[ {{{\left( {1 + {{\left| {{Y_{{t_{{n_t}}}}}} \right|}^2}} \right)}^{\frac{p}{2}}}|{{\cal F}_0}} \right] \le {(1 + C\check{\delta} )^{{n_t}}}{\left( {1 + {{\left| {{x_0}} \right|}^2}} \right)^{\frac{p}{2}}}  \quad a.s.
\end{equation*}
From \eqref{timestep} one sees $\hat{\delta} \le \delta_n \le \check{\delta}$, which implies ${n_t}\hat \delta  \le t$ for any $t \in [0, T]$. In addition, using the facts
\begin{equation*}
	{(1 + x)^a} \le {e^{ax}}, \quad \forall a \ge 0, \; x \ge 0,
\end{equation*}
and $\check{\delta} = \rho \hat{\delta}$ we get
\begin{equation*}
\begin{split}
	\mathbb{E}\left[ {{{\left( {1 + {{\left| {{Y_{{t_{{n_t}}}}}} \right|}^2}} \right)}^{\frac{p}{2}}}|{{\cal F}_0}} \right] \le {e^{C{n_t}\check{\delta} }}{\left( {1 + {{\left| {{x_0}} \right|}^2}} \right)^{\frac{p}{2}}} \le {e^{C\rho {n_t}\hat \delta }}{\left( {1 + {{\left| {{x_0}} \right|}^2}} \right)^{\frac{p}{2}}} \le {e^{C\rho T}}{\left( {1 + {{\left| {{x_0}} \right|}^2}} \right)^{\frac{p}{2}}} \quad a.s.
\end{split}
\end{equation*}
Taking expectations on both sides yields
\begin{equation}  \label{mn5}
	\mathbb{E}\left[ {{{\left( {1 + {{\left| {{Y_{{t_{{n_t}}}}}} \right|}^2}} \right)}^{\frac{p}{2}}}} \right] \le C. 
\end{equation}
The first assertion is a direct consequence of \eqref{continuous} and \eqref{mn5}, i.e.,
\begin{equation}  \label{mn6}
		\mathop {\sup }\limits_{0 \le t \le T} \mathbb{E}{\left| {\bar Y(t)} \right|^p} = \mathop {\sup }\limits_{0 \le t \le T} \mathbb{E}{\left| {{Y_{{t_{{n_t}}}}}} \right|^p} \le \mathop {\sup }\limits_{0 \le t \le T} \mathbb{E}\left[ {{{\left( {1 + {{\left| {{Y_{{t_{{n_t}}}}}} \right|}^2}} \right)}^{\frac{p}{2}}}} \right] \le C.
\end{equation}
Furthermore, from \eqref{continuous} and \eqref{continuous1} we see
\begin{equation*}
	Y(t) = {Y_{{t_{{n_t}}}}} + F({Y_{{t_{{n_t}}}}})(t - {t_{{n_t}}}) + G({Y_{{t_{{n_t}}}}})(W(t) - W({t_{{n_t}}})),  \quad \forall t \in [0, T].
\end{equation*}
Applying Remarks \ref{rem-2} and \ref{remark-1} one gets
\begin{equation*}
\begin{split}
	&\mathbb{E}\left( {{{\left| {Y(t)} \right|}^p}|{\mathcal{F}_{{t_{{n_t}}}}}} \right) \\
	\le& {3^{p - 1}}{\left| {{Y_{{t_{{n_t}}}}}} \right|^p} + {3^{p - 1}}{\left| {F({Y_{{t_{{n_t}}}}})} \right|^p}{(t - {t_{{n_t}}})^p} + {3^{p - 1}}{\left| {G({Y_{{t_{{n_t}}}}})} \right|^p}\mathbb{E}\left( {{{\left| {W(t) - W({t_{{n_t}}})} \right|}^p}|{\mathcal{F}_{{t_{{n_t}}}}}} \right)  \\
	\le& {3^{p - 1}}{\left| {{Y_{{t_{{n_t}}}}}} \right|^p} + {3^{p - 1}}{(K + \rho )^p}{{\hat \delta }^{ - p\gamma }}\delta _{{n_t}}^p{\left( {1 + \left| {{Y_{{t_{{n_t}}}}}} \right|} \right)^p} + {3^{p - 1}}{(K + {\rho ^{\frac{1}{2}}})^p}{{\hat \delta }^{ - \frac{{p\gamma }}{2}}}\delta _{{n_t}}^{\frac{p}{2}}{\left( {1 + \left| {{Y_{{t_{{n_t}}}}}} \right|} \right)^p} \\
	\le&  C\left( {1 + {{\left| {{Y_{{t_{{n_t}}}}}} \right|}^p}} \right) \quad a.s.
\end{split}
\end{equation*}
Then, taking expectations on both sides and using \eqref{mn6} yields
\begin{equation*}  
	\mathbb{E}{\left| {Y(t)} \right|^p} \le C\left( {1 + \mathbb{E}{{\left| {{Y_{{t_{{n_t}}}}}} \right|}^p}} \right) \le C.
\end{equation*}
Therefore, we get the desired result that
\begin{equation*}
	\mathop {\sup }\limits_{0 \le t \le T} \mathbb{E}{\left| {Y(t)} \right|^p} \le C .
\end{equation*}
This completes the proof.
\end{proof}

\begin{remark}  \label{rem-diff}
In the boundedness analysis of fixed–step numerical schemes (see e.g., \cite{mao2015the, sabanis2016euler}), one often exploits an inequality of the form ${\sup _{0 \le t \le T}}\mathbb{E}{\left| {{Y_{{t_{{n_t}}}}}} \right|^p} \le {\sup _{0 \le t \le T}}\mathbb{E}{\left| {Y(t)} \right|^p} $. Such an inequality is valid when the underlying grid is deterministic. However, it may fail for adaptive schemes, since the time grid points $t_n$ are stopping times. Indeed, for a fixed $t \in [0,T]$, the index $n_t$ and the grid point $t_{n_t}$ are random and depend on the sample point $\omega$. As a result, $t_{n_t}$ may select, on each realization, different time points at which $\left| {{Y_ \cdot }} \right|$ takes large values. Consequently, it becomes necessary to establish the $p$th moment boundedness of $Y_{t_{n_t}}$ for any $t \in [0, T]$.
\end{remark}

\begin{lemma}  \label{lemma-stop}  
	Let Assumption \ref{A2.1} hold. For any constant $R > |x_0|$, define a stopping time by
\begin{equation}    \label{varthetaR}
		{\vartheta _{R, \hat{\delta}}} = \inf \{ t \ge 0:\left| {Y(t)} \right| \ge R\}.
\end{equation}
Then, for any $T > 0$,
	\begin{equation}  \label{Pvar}
		\mathbb{P}({\vartheta _{R, \hat{\delta}}} \le T)  \le \frac{{{C}}}{{{R^p}}}.
	\end{equation}
\end{lemma}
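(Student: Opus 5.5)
The plan is to apply Itô's formula to $V(x)=(1+|x|^2)^{p/2}$ along the continuous-time approximation $Y(t)$ defined in \eqref{continuous1}, stopped at $\vartheta:=\vartheta_{R,\hat\delta}\wedge T$. Since $|Y(\vartheta_{R,\hat\delta})|\ge R$ on $\{\vartheta_{R,\hat\delta}\le T\}$ by continuity of $Y$, we have
\begin{equation*}
R^p\,\mathbb{P}(\vartheta_{R,\hat\delta}\le T)\le \mathbb{E}V(Y(\vartheta)).
\end{equation*}
The statement \eqref{Pvar} therefore follows once we show $\mathbb{E}V(Y(\vartheta))\le C$ with $C$ independent of $R$ (and of $\hat\delta$).

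Itô's formula gives
\begin{equation*}
\mathbb{E}V(Y(\vartheta))\le V(x_0)+\frac p2\,\mathbb{E}\int_0^{\vartheta}(1+|Y(s)|^2)^{\frac p2-1}\Big(2\langle Y(s),F(\bar Y(s))\rangle+(p-1)|G(\bar Y(s))|^2\Big)ds .
\end{equation*}
The martingale term vanishes because, before $\vartheta$, $|Y|\le R$, and $|G(\bar Y)|$ is bounded by \eqref{Flinear} in terms of $|\bar Y|$, which has finite moments by Theorem \ref{th2.1}. We then split $\langle Y,F(\bar Y)\rangle=\langle \bar Y,F(\bar Y)\rangle+\langle Y-\bar Y,F(\bar Y)\rangle$. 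On the first piece together with the $G$ term, Lemma \ref{fgdelta} gives the bound $2C_1(1+|\bar Y|^2)$. Young's inequality then bounds this contribution by $C\big(1+|Y(s)|^p+|\bar Y(s)|^p\big)$.

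The main obstacle is the cross term
\begin{equation*}
\mathbb{E}\int_0^{\vartheta}(1+|Y|^2)^{\frac p2-1}\langle Y-\bar Y,F(\bar Y)\rangle ds .
\end{equation*}
I would first drop the stopping (bounding by the integral over $[0,T]$ of absolute values) and then estimate each time $s$ by conditioning on $\mathcal F_{t_{n_s}}$. We have $Y(s)-\bar Y(s)=F(\bar Y)(s-t_{n_s})+G(\bar Y)(W(s)-W(t_{n_s}))$, with $s-t_{n_s}\le\check\delta$. By \eqref{Flinear}, the conditional moments of this increment are at most $C\hat\delta^{-\gamma}\check\delta^{1/2}(1+|\bar Y|)$ (this uses Remark \ref{remark-1}, with $\gamma\le 1/3$ keeping these powers bounded). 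Applying Hölder/Young with exponents $p/(p-2)$, $p/2$ and using Theorem \ref{th2.1} then yields a bound $C\big(1+\sup_{t\le T}\mathbb{E}|Y(t)|^p+\sup_{t\le T}\mathbb{E}|\bar Y(t)|^p\big)$.

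One delicacy remains: the Brownian increment from $t_{n_s}$ to $s$ is not exactly conditionally Gaussian, since $s$ is deterministic and $t_{n_s}$ is random. I would handle this with the Burkholder--Davis--Gundy inequality on the local martingale $\int_{t_{n_s}}^s dW$ over an interval of length at most $\check\delta$, or simply bound $|W(s)-W(t_{n_s})|\le \sup_{|u-v|\le 1,\,u,v\le T}|W(u)-W(v)|$, which has all moments. Combining everything with Theorem \ref{th2.1} gives $\mathbb{E}V(Y(\vartheta))\le C$, with $C$ depending only on $T,p,\alpha,x_0,K,\rho$, and hence \eqref{Pvar}.
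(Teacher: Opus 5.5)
Your strategy is the paper's. You apply Itô's formula to a $p$th-power Lyapunov function along $Y$ stopped at $T\wedge\vartheta_{R,\hat\delta}$, split $\langle Y,F(\bar Y)\rangle=\langle\bar Y,F(\bar Y)\rangle+\langle Y-\bar Y,F(\bar Y)\rangle$, use Lemma \ref{fgdelta} on the first piece and Young on the cross term, invoke Theorem \ref{th2.1}, and conclude from $R^p\,\mathbb{P}(\vartheta_{R,\hat\delta}\le T)\le\mathbb{E}|Y(T\wedge\vartheta_{R,\hat\delta})|^p$. Using $(1+|x|^2)^{p/2}$ instead of $|x|^p$ is harmless. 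So is conditioning on $\mathcal{F}_{t_{n_s}}$ instead of the paper's Cauchy--Schwarz step $(\mathbb{E}|Y-\bar Y|^p)^{1/2}(\mathbb{E}|F(\bar Y)|^p)^{1/2}$.

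\textbf{The cross-term estimate does not close as written.} Your lumped increment bound $C\hat\delta^{-\gamma}\check\delta^{1/2}(1+|\bar Y|)$, combined with $|F(\bar Y)|\le(K+\rho)\hat\delta^{-\gamma}(1+|\bar Y|)$ after Young with exponents $p/(p-2),\,p/2$, gives
\[
\mathbb{E}\big[|Y-\bar Y|^{p/2}|F(\bar Y)|^{p/2}\big]\le C\hat\delta^{-\gamma p}\check\delta^{p/4}\big(1+\mathbb{E}|\bar Y|^p\big)=C\rho^{\gamma p}\check\delta^{\frac p4(1-4\gamma)}\big(1+\mathbb{E}|\bar Y|^p\big).
\]
This blows up as $\check\delta\to0$ for $\gamma\in(1/4,1/3]$. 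That range is allowed by the scheme, and Theorem \ref{rate1} uses $\gamma=1/3$ when $l=2$, $q=p/4$. The constant in \eqref{Pvar} must be uniform in $\check\delta$ for fixed $\rho$: Theorem \ref{strong-con2} fixes $R$ before letting $\check\delta\to0$, and Theorem \ref{rate1} ties $R$ to $\hat\delta$. So this is a real defect, not cosmetic.

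The fix is to keep the drift and diffusion parts separate and use the sharper bound $|G(x)|\le(K+\rho^{1/2})\hat\delta^{-\gamma/2}(1+|x|)$ from \eqref{Flinear}. The diffusion contribution is then $\rho^{3\gamma p/4}\check\delta^{\frac p4(1-3\gamma)}$ and the drift contribution is $\rho^{\gamma p}\check\delta^{\frac p2(1-2\gamma)}$. Both are bounded precisely because $\gamma\le 1/3$; this is the computation in \eqref{kl3}--\eqref{kl10}.

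\textbf{Your fallback $|W(s)-W(t_{n_s})|\le\sup_{|u-v|\le1}|W(u)-W(v)|$ does not work.} It discards the factor $\check\delta^{1/2}$ needed to absorb $\hat\delta^{-3\gamma/2}$. Also, the supremum is not independent of $\bar Y(s)$, so you would need a Hölder step requiring moments of $\bar Y$ beyond order $p$, which Theorem \ref{th2.1} does not supply.

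Your BDG option does work. Write
\[
|F(\bar Y(s))|\,G(\bar Y(s))\big(W(s)-W(t_{n_s})\big)=\int_0^s\mathcal{I}_{\{u\ge t_{n_s}\}}|F(\bar Y(u))|\,G(\bar Y(u))\,dW(u).
\]
The integrand is adapted because $t_{n_s}$ is a stopping time and $\bar Y$ is constant on $[t_{n_s},s]$. BDG then gives the factor $\check\delta^{p/4}$.

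In fact the worry is unnecessary. The event $\{n_s=n\}=\{t_n\le s<t_{n+1}\}$ lies in $\mathcal{F}_{t_n}$. On it, the strong Markov property at $t_n$ makes $W(s)-W(t_n)$ conditionally $N(0,(s-t_n)I)$ with $s-t_n\le\check\delta$. So the Gaussian moment bounds of Remark \ref{remark-1} apply piecewise.
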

\begin{proof}
Fix $T > 0$, for any $t \in [0,T]$, applying It\^o's formula to \eqref{continuous1} and using Lemma \ref{fgdelta}, we obtain 
\begin{equation*}
\begin{split}
	{\left| {Y(t)} \right|^p} \le& {\left| {{x_0}} \right|^p} + p\int_0^t {{{\left| {Y(s)} \right|}^{p - 2}}\left( {\left\langle {Y(s),F(\bar Y(s))} \right\rangle  + \frac{{p - 1}}{2}{{\left| {G(\bar Y(s))} \right|}^2}} \right)ds}+M(t) \\
	=& {\left| {{x_0}} \right|^p} + p\int_0^t {{{\left| {Y(s)} \right|}^{p - 2}}\left( {\left\langle {\bar Y(s),F(\bar Y(s))} \right\rangle  + \frac{{p - 1}}{2}{{\left| {G(\bar Y(s))} \right|}^2}} \right)ds} \\
	&+ p\int_0^t {{{\left| {Y(s)} \right|}^{p - 2}}\left\langle {Y(s) - \bar Y(s),F(\bar Y(s))} \right\rangle ds} +M(t)\\
	\le& {\left| {{x_0}} \right|^p} + p{C_1}\int_0^t {{{\left| {Y(s)} \right|}^{p - 2}}\left( {1 + {{\left| {\bar Y(s)} \right|}^2}} \right)ds}  \\
	&+ p\int_0^t {{{\left| {Y(s)} \right|}^{p - 2}}\left| {Y(s) - \bar Y(s)} \right|\left| {F(\bar Y(s))} \right|ds}  + M(t),
\end{split}
\end{equation*}
where $M(t) = p\int_0^t {{{\left| {Y(s)} \right|}^{p - 2}}{Y^{\rm T}}(s)G(\bar Y(s))dW(s)} $ is a local martingale with initial value $0$. This implies
\begin{equation*}
\begin{split}
	\mathbb{E}{\left| {Y(t \wedge {\vartheta _{R, \hat{\delta}}})} \right|^p} \le& {\left| {{x_0}} \right|^p} + p{C_1}\mathbb{E}\left( {\int_0^{t \wedge {\vartheta _{R,\hat \delta }}} {{{\left| {Y(s)} \right|}^{p - 2}}ds} } \right) + p{C_1}\mathbb{E}\left( {\int_0^{t \wedge {\vartheta _{R,\hat \delta }}} {{{\left| {Y(s)} \right|}^{p - 2}}{{\left| {\bar Y(s)} \right|}^2}ds} } \right) \\
	&+ p\mathbb{E}\left( {\int_0^{t \wedge {\vartheta _{R,\hat \delta }}} {{{\left| {Y(s)} \right|}^{p - 2}}\left| {Y(s) - \bar Y(s)} \right|\left| {F(\bar Y(s))} \right|ds} } \right) .
\end{split}
\end{equation*}
Using Young's inequality and H\"older's inequality one has
\begin{equation}  \label{kl}
\begin{split}
	\mathbb{E}{\left| {Y(t \wedge {\vartheta _{R, \hat{\delta}}})} \right|^p} 
	\le&  {\left| {{x_0}} \right|^p} + 2{C_1}T + C\mathbb{E}\left( {\int_0^T {{{\left| {Y(t)} \right|}^p}dt} } \right) + C\mathbb{E}\left( {\int_0^T {{{\left| {\bar Y(t)} \right|}^p}dt} } \right)  \\
	&+ C\mathbb{E}\left( {\int_0^T {{{\left| {Y(t) - \bar Y(t)} \right|}^{\frac{p}{2}}}{{\left| {F(\bar Y(t))} \right|}^{\frac{p}{2}}}dt} } \right) \\
	\le& {\left| {{x_0}} \right|^p} + 2{C_1}T + C\int_0^T {\mathbb{E}{{\left| {Y(t)} \right|}^p}dt}  + C\int_0^t {\mathbb{E}{{\left| {\bar Y(t)} \right|}^p}dt} \\
	&+ C\int_0^T {{{\left( {\mathbb{E}{{\left| {Y(t) - \bar Y(t)} \right|}^p}} \right)}^{\frac{1}{2}}}{{\left( {\mathbb{E}{{\left| {F(\bar Y(t))} \right|}^p}} \right)}^{\frac{1}{2}}}dt} .
\end{split}
\end{equation}
For any $t \in [0, T]$, it follows from \eqref{continuous} and \eqref{continuous1} that
\begin{equation*}
\begin{split}
	Y(t) - \bar Y(t) = Y(t) - {Y_{{t_{{n_t}}}}} = F({Y_{{t_{{n_t}}}}})(t - {t_{{n_t}}}) + G({Y_{{t_{{n_t}}}}})(W(t) - W({t_{{n_t}}})).
\end{split}
\end{equation*}
Using \eqref{Flinear}, \eqref{kl7} and Remark \ref{remark-1} yields
\begin{equation}  \label{kl3}
\begin{split}
	&\mathbb{E}\left( {{{\left| {Y(t) - \bar Y(t)} \right|}^p}|{\mathcal{F}_{{t_{{n_t}}}}}} \right) \\
	\le& {2^{p - 1}}{\left| {F({Y_{{t_{{n_t}}}}})} \right|^p}\mathbb{E}\left( {{{\left| {t - t_{n_t}} \right|}^p}|{\mathcal{F}_{{t_{{n_t}}}}}} \right) + {2^{p - 1}}{\left| {G({Y_{{t_{{n_t}}}}})} \right|^p}\mathbb{E}\left( {{{\left| {W(t) - W({t_{{n_t}}})} \right|}^p}|{\mathcal{F}_{{t_{{n_t}}}}}} \right) \\
	\le& {2^{p - 1}}{(K + \rho )^p}{{\hat \delta }^{ - p\gamma }}{{\check{\delta} }^p}{\left( {1 + \left| {{Y_{{t_{{n_t}}}}}} \right|} \right)^p} + {2^{p - 1}}{(K + {\rho ^{\frac{1}{2}}})^p}{{\hat \delta }^{ - \frac{{p\gamma }}{2}}}{{\check{\delta} }^{\frac{p}{2}}}{\left( {1 + \left| {{Y_{{t_{{n_t}}}}}} \right|} \right)^p} \\
	\le& C{{\check{\delta} }^{\frac{p}{2}(1 - \gamma )}}\left( {1 + {{\left| {{Y_{{t_{{n_t}}}}}} \right|}^p}} \right) \quad a.s.
\end{split}
\end{equation}
Taking expectations on both sides one has 
\begin{equation}  \label{kl1}
\begin{split}
	\mathbb{E}{\left| {Y(t) - \bar Y(t)} \right|^p} \le C{{\check{\delta} }^{\frac{p}{2}(1 - \gamma )}}\left( {1 + \mathbb{E}{{\left| {\bar Y(t)} \right|}^p}} \right).
\end{split}
\end{equation}
Besides, using \eqref{Flinear} we have
\begin{equation}  \label{kl2}
\begin{split}
	\mathbb{E}{\left| {F(\bar Y(t))} \right|^p} \le {(K + \rho )^p}{{\hat \delta }^{ - p\gamma }}\mathbb{E}{\left( {1 + \left| {{Y_{{t_{{n_t}}}}}} \right|} \right)^p} \le C{{\hat \delta }^{ - p\gamma }}\left( {1 + \mathbb{E}{{\left| {\bar Y(t)} \right|}^p}} \right) .
\end{split}
\end{equation}
Due to the fact $\gamma \in (0, 1/3]$, it follows from \eqref{kl1} and \eqref{kl2} that
\begin{equation}  \label{kl10}
\begin{split}
	{\left( {\mathbb{E}{{\left| {Y(t) - \bar Y(t)} \right|}^p}} \right)^{\frac{1}{2}}}{\left( {\mathbb{E}{{\left| {F(\bar Y(t))} \right|}^p}} \right)^{\frac{1}{2}}} \le C\rho^{p\gamma}{\check{\delta}^{\frac{p}{4}(1 - 3\gamma )}}\left( {1 + \mathbb{E}{{\left| {\bar Y(t)} \right|}^p}} \right)
	 \le C\left( {1 + \mathbb{E}{{\left| {\bar Y(t)} \right|}^p}} \right).
\end{split}
\end{equation}
Substituting \eqref{kl10} into \eqref{kl}, using Theorem \ref{th2.1} yields
\begin{equation}  \label{kl14}
\begin{split}
	\mathbb{E}{\left| {Y(t \wedge {\vartheta _{R,\hat \delta }})} \right|^p} \le& C + C\left( {\int_0^T {\mathbb{E}{{\left| {Y(t)} \right|}^p}dt}  + \int_0^T {\mathbb{E}{{\left| {\bar Y(t)} \right|}^p}dt} } \right) \\
	\le&  C + C\left( {\int_0^T {\mathop {\sup }\limits_{0 \le u \le t} \mathbb{E}{{\left| {Y(u)} \right|}^p}dt}  + \int_0^T {\mathop {\sup }\limits_{0 \le u \le t} \mathbb{E}{{\left| {\bar Y(u)} \right|}^p}dt} } \right) \le C .
\end{split}
\end{equation}
Using \eqref{kl14} and \eqref{varthetaR} one sees
\begin{equation*}
\begin{split}
	{C} \ge& \mathbb{E}{\left| {Y(T \wedge {\vartheta _{R,\hat \delta }})} \right|^p} = \mathbb{E}\left( {{{\left| {Y(T \wedge {\vartheta _{R,\hat \delta }})} \right|}^p}{{\cal I}_{\{ {\vartheta _{R,\hat \delta }} \le T\} }}} \right) + \mathbb{E}\left( {{{\left| {Y(T \wedge {\vartheta _{R,\hat \delta }})} \right|}^p}{{\cal I}_{\{ {\vartheta _{R,\hat \delta }} > T\} }}} \right) \\
	\ge& \mathbb{E}\left( {{{\left| {Y({\vartheta _{R, \hat{\delta}}})} \right|}^p}{{{\cal I}_{\{ {\vartheta _{R,\hat \delta }} \le T\} }}} } \right) = {R^p}\mathbb{P}({\vartheta _{R, \hat{\delta}}} \le T) ,
\end{split}
\end{equation*}
which implies 
\begin{equation*}
	\mathbb{P}({\vartheta _{R, \hat{\delta}}} \le T) \le \frac{{{C}}}{{{R^p}}}.
\end{equation*}
The proof is therefore complete.
\end{proof}

\subsection{$L^q$-strong convergence on finite time intervals}
In the following, our aim is to establish the strong convergence of ATS-TEM \eqref{scheme}-\eqref{eq-tem} in the $q$th moment sense with $ q \in (0, p)$, where $p$ is given in Assumption \ref{A2.1}. 

\begin{theorem} \label{strong-con2}
Let Assumption \ref{A2.1} hold. For any $q \in (0, p)$, 
\begin{equation*}
	\mathop {\lim }\limits_{{\check{\delta}} \to 0} \mathop {\sup }\limits_{0 \le t \le T} \mathbb{E}{\left| {X(t) -  Y(t)} \right|^q} = 0, \quad  
	\mathop {\lim }\limits_{{\check{\delta}} \to 0} \mathop {\sup }\limits_{0 \le t \le T} \mathbb{E}{\left| {X(t) - \bar Y(t)} \right|^q} = 0. 
\end{equation*}
\end{theorem}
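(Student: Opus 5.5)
Fix $T>0$ and $q\in(0,p)$; the plan is the standard localization argument for truncated-type schemes. Set $e(t)=X(t)-Y(t)$, and for $R>|x_0|$ let $\theta_R=\sigma_R\wedge\vartheta_{R,\hat\delta}$, with $\sigma_R$ from \eqref{sigmaR} and $\vartheta_{R,\hat\delta}$ from \eqref{varthetaR}. Young's inequality gives, for any $\eta>0$,
\begin{equation*}
\mathbb{E}|e(t)|^q\le \mathbb{E}\big(|e(t)|^q\mathcal{I}_{\{\theta_R>T\}}\big)+\frac{q\eta}{p}\mathbb{E}|e(t)|^p+\frac{p-q}{p\eta^{q/(p-q)}}\mathbb{P}(\theta_R\le T).
\end{equation*}
The tail terms are controlled as follows. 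By Lemma \ref{EXt} and Theorem \ref{th2.1}, $\mathbb{E}|e(t)|^p\le C$ uniformly in $t\in[0,T]$ and in the step parameters. By \eqref{PsigmaR} and \eqref{Pvar}, $\mathbb{P}(\theta_R\le T)\le C/R^p$. Given $\varepsilon>0$, I first choose $\eta$ small so that $C\eta\le\varepsilon/3$, then $R$ large so that the last term is at most $\varepsilon/3$.

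It remains to show that $\mathbb{E}|e(t\wedge\theta_R)|^2\to0$ as $\check\delta\to0$ for this fixed $R$. By Hölder's inequality this also handles $q<2$, and for $q\ge2$ I will work directly with $|e|^q$, or interpolate using the $p$-moment bound.

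\textbf{Step 1 (recovering the original coefficients).} Take $\check\delta$ small; since $\hat\delta\le\check\delta$, $\hat\delta$ is small too, so that $\varphi^{-1}(K\hat\delta^{-\gamma})\ge R$. Then $\pi_{\hat\delta}(x)=x$ for $|x|\le R$. Moreover, for $s<\theta_R$ we have $|\bar Y(s)|=|Y(t_{n_s})|<R$, because $t_{n_s}\le s$. So on $[0,\theta_R)$ both branches of \eqref{FG} agree with the original coefficients: $F(\bar Y(s))=f(\bar Y(s))$ and $G(\bar Y(s))=g(\bar Y(s))$.

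\textbf{Step 2 (error equation).} Apply Itô's formula to $|e|^2$ up to $t\wedge\theta_R$. Inside the ball of radius $R$ the local Lipschitz bound \eqref{llc} applies, so
\begin{equation*}
\mathbb{E}|e(t\wedge\theta_R)|^2\le C_R\int_0^t\Big(\mathbb{E}|e(s\wedge\theta_R)|^2+\mathbb{E}\big|Y(s\wedge\theta_R)-\bar Y(s\wedge\theta_R)\big|^2\Big)ds.
\end{equation*}

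\textbf{Step 3 (local increment).} By \eqref{kl1} and Theorem \ref{th2.1}, $\mathbb{E}|Y(s)-\bar Y(s)|^2\le C\check\delta^{1-\gamma}$. I will apply this at the stopped times $s\wedge\theta_R$; if that is awkward, I can instead bound $|Y-\bar Y|^2\mathcal{I}_{\{s<\theta_R\}}$ by the unstopped quantity.

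\textbf{Step 4 (conclusion).} Gronwall's inequality gives $\mathbb{E}|e(t\wedge\theta_R)|^2\le C_R\check\delta^{1-\gamma}$, which is smaller than $\varepsilon/3$ once $\check\delta$ is small. This proves the first claim. The second follows from $|X-\bar Y|^q\le C(|X-Y|^q+|Y-\bar Y|^q)$ together with \eqref{kl1}.

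\textbf{Main obstacle.} The delicate point is that every constant in Theorem \ref{th2.1}, Lemma \ref{lemma-stop} and \eqref{kl1} must be uniform in $(\hat\delta,\check\delta)$ as $\check\delta\to0$. These constants contain the ratio $\rho=\check\delta/\hat\delta$ (for instance $e^{C\rho T}$, and $\rho^{p\gamma}$ in \eqref{kl10}). I will therefore let $\check\delta\to0$ with $\rho$ held fixed, or bounded, and state this explicitly.

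A second point to check is the stopped-time use of \eqref{kl1} in Step 3. Since $s\wedge\theta_R$ is random, conditioning on $\mathcal{F}_{t_{n_s}}$ needs care. I would handle it via $\mathcal{I}_{\{s<\theta_R\}}\le1$ applied to the unstopped increment, writing the stopped integral as $\int_0^{t}\mathcal{I}_{\{s<\theta_R\}}(\cdots)ds$.
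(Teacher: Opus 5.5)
Your proof is correct. Its skeleton matches the paper's: the same Young split with $\theta_R=\sigma_R\wedge\vartheta_{R,\hat\delta}$, the same order of choices ($\eta$, the paper's $\kappa$, then $R$, then $\check\delta$), and local Lipschitz plus Gronwall on the stopped error, fed by the increment bound for $Y-\bar Y$. It differs from the paper in two steps.

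First, the truncation error. The paper keeps the defects $f(\bar Y)-F(\bar Y)$ and $g(\bar Y)-G(\bar Y)$ and bounds them by H\"older and Chebyshev. This produces the extra term $C_R(\varphi^{-1}(K\check\delta^{-\gamma}))^{-(p-q)}$. You instead take $\check\delta$ below an $R$-dependent threshold so that $\varphi^{-1}(K\hat\delta^{-\gamma})\ge R$. Then both defects vanish identically on $[0,\theta_R)$, because $|\bar Y(s)|=|Y(t_{n_s})|<R$ there.
\begin{itemize}
\item This is the device the paper itself uses in \eqref{st5} for the rate.
\item It costs nothing here, since $R$ is fixed before $\check\delta\to0$.
\item It sidesteps the step in \eqref{st9} where the local constant $C_R$ is applied to $\bar Y(s)$ after the time integral has been extended to $[0,T]$. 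There $|\bar Y(s)|<R$ is no longer guaranteed.
\item In exchange, the paper's version gives an explicit bound for every $\check\delta$ rather than only below a threshold. That is irrelevant for a pure convergence statement.
\end{itemize}

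Second, the choice of moment. The paper estimates $|e|^q$ directly through the moment inequality for stochastic integrals, so it needs $q\ge2$ and handles $q<2$ by H\"older. You run It\^o on $|e|^2$ and then transfer to $q$. The simplest transfer uses $|e(t\wedge\theta_R)|\le 2R$:
\begin{itemize}
\item for $q\ge2$, $\mathbb{E}|e(t\wedge\theta_R)|^q\le(2R)^{q-2}\,\mathbb{E}|e(t\wedge\theta_R)|^2$;
\item for $q<2$, use Lyapunov's inequality.
\end{itemize}
This also covers the borderline case $p=2$, where the paper's reduction to $q\in[2,p)$ is vacuous.

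Your caveat on $\rho=\check\delta/\hat\delta$ is well taken. The paper's constants carry the same dependence: the factor $e^{C\rho T}$ in the proof of Theorem \ref{th2.1}, and $\rho^{p\gamma}$ in \eqref{kl10}. The paper silently absorbs these into $C$. So letting $\check\delta\to0$ with $\rho$ bounded is exactly what the paper's argument establishes as well.
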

\begin{proof}
We present the proof for $q \in [2, p)$. The case $q \in (0, 2)$ then follows from H\"older's inequality.
First, we define $\theta_{R, \hat{\delta}} = {\sigma _R} \wedge {\vartheta _{R, \hat{\delta}}}$, where $\sigma_R$ and $\vartheta_{R, \hat{\delta}}$ are defined in \eqref{sigmaR} and \eqref{varthetaR}, respectively. Recall Young's inequality, $ab \le \frac{\kappa }{r}{a^r} + \frac{1}{{s{\kappa ^{s/r}}}}{b^s}$ for any $1/r + 1/s = 1$ with $a, b, \kappa >0$ and $r, s > 1$. Then, for any $t \in [0, T]$,
\begin{equation}  \label{st1}
\begin{split}
	&\mathbb{E}{\left| {X(t) - {Y}(t)} \right|^q} \\
	=& \mathbb{E}\left( {{{\left| {X(t) - {Y}(t)} \right|}^q}{\mathcal{I}_{\{ \theta_{R, \hat{\delta}}  > T\} }}} \right) + \mathbb{E}\left( {{{\left| {X(t) - {Y}(t)} \right|}^q}{\mathcal{I}_{\{ {\theta_{R, \hat{\delta}}}  \le T\} }}} \right) \\
	\le& \mathbb{E}\left( {{{\left| {X(t \wedge {\theta_{R, \hat{\delta}}}) - Y(t \wedge {\theta_{R, \hat{\delta}}})} \right|}^q}{{\cal I}_{\{ {\theta_{R, \hat{\delta}}} > T\} }}} \right) + \frac{{q\kappa }}{p}\mathbb{E}{\left| {X(t) - {Y}(t)} \right|^p} + \frac{{p - q}}{{p{\kappa ^{q/(p - q)}}}}\mathbb{P}({\theta_{R, \hat{\delta}}} \le T) .
\end{split}
\end{equation}
According to Lemmas \ref{EXt}, \ref{lemma-stop} and Theorem \ref{th2.1}  one sees
\begin{equation*}
	\mathbb{E}{\left| {X(t) -  Y(t)} \right|^p} \le {2^{p - 1}}\mathop {\sup }\limits_{0 \le t \le T} \mathbb{E}{\left| {X(t)} \right|^p} + {2^{p - 1}}\mathop {\sup }\limits_{0 \le t \le T} \mathbb{E}{\left| { Y(t)} \right|^p} \le {C}
\end{equation*}
and
\begin{equation*} 
\begin{split}
	\mathbb{P}({\theta_{R, \hat{\delta}}} \le T) \le \mathbb{P}({\sigma _R} \le T) + \mathbb{P}({\vartheta _{R, \hat{\delta}} } \le T) \le \frac{{{C}}}{{{R^p}}}.
\end{split}
\end{equation*}
Substituting the above into \eqref{st1} yields
\begin{equation}  \label{st2}
\begin{split}
	\mathbb{E}{\left| {X(t) - {Y}(t)} \right|^q} \le& \mathbb{E}\left( {{{\left| {X(t \wedge {\theta_{R, \hat{\delta}}}) - Y(t \wedge {\theta_{R, \hat{\delta}}})} \right|}^q}{{\cal I}_{\{ {\theta_{R, \hat{\delta}}} > T\} }}} \right) + C\kappa + \frac{C}{{{\kappa ^{q/(p - q)}}{R^p}}} \\
	\le& \mathbb{E}\left( {{{\left| {X(t \wedge {\theta_{R, \hat{\delta}}}) -  Y(t \wedge {\theta_{R, \hat{\delta}}})} \right|}^q}} \right) + C\kappa + \frac{C}{{{\kappa ^{q/(p - q)}}{R^p}}}.
\end{split}
\end{equation}
Combining with \eqref{equation} and \eqref{continuous1}, applying the elementary inequality and \cite[Theorem 1.7.1]{mao2008stochastic} we have
\begin{equation}  \label{st4}
\begin{split}
	&\mathbb{E}{\left| {X(t \wedge {\theta_{R, \hat{\delta}}}) - Y(t \wedge {\theta_{R, \hat{\delta}}})} \right|^q} \\
	\le& {2^{q - 1}}\mathbb{E}{\left| {\int_0^{t \wedge {\theta_{R, \hat{\delta}}}} {\left( {f(X(s)) - F(\bar Y(s))} \right)ds} } \right|^q} + {2^{q - 1}}\mathbb{E}{\left| {\int_0^{t \wedge {\theta_{R, \hat{\delta}}}} {\left( {g(X(s)) - G(\bar Y(s))} \right)dW(s)} } \right|^q} \\
	\le& {2^{q - 1}}{T^{q - 1}}\mathbb{E}\left( {\int_0^{t \wedge {\theta_{R, \hat{\delta}}}} {{{\left| {f(X(s)) - F(\bar Y(s))} \right|}^q}ds} } \right)  \\
	&+ {2^{q - 1}}{\left( {\frac{{q(q - 1)}}{2}} \right)^{\frac{q}{2}}}{T^{\frac{{q - 2}}{2}}}\mathbb{E}\left( {\int_0^{t \wedge {\theta_{R, \hat{\delta}}}} {{{\left| {g(X(s)) - G(\bar Y(s))} \right|}^q}ds} } \right) \\
	\le& C\mathbb{E}\left( {\int_0^{t \wedge {\theta _{R,\hat \delta }}} {{{\left| {f(X(s)) - f(\bar Y(s))} \right|}^q}ds} } \right) + C\mathbb{E}\left( {\int_0^{t \wedge {\theta _{R,\hat \delta }}} {{{\left| {f(\bar Y(s)) - F(\bar Y(s))} \right|}^q}ds} } \right)  \\
	&+ C\mathbb{E}\left( {\int_0^{t \wedge {\theta _{R,\hat \delta }}} {{{\left| {g(X(s)) - g(\bar Y(s))} \right|}^q}ds} } \right) + C\mathbb{E}\left( {\int_0^{t \wedge {\theta _{R,\hat \delta }}} {{{\left| {g(\bar Y(s)) - G(\bar Y(s))} \right|}^q}ds} } \right) .
\end{split}
\end{equation}
It follows from \eqref{pi} and \eqref{FG} that
\begin{equation} \label{st8}
	\begin{split}
		&\left| {f(\bar Y(s)) - F(\bar Y(s))} \right| = \left| {f(\bar Y(s)) - {f_{\hat \delta }}(\bar Y(s))} \right|{{\cal I}_{\left\{ {{\delta _{{n_s}}} = \hat \delta } \right\}}} \\
		=& \left| {f(\bar Y(s)) - f\left( {{\pi _{\hat \delta }}(\bar Y(s))} \right)} \right|{{\cal I}_{\left\{ {\left| {\bar Y(s)} \right| > {\varphi ^{ - 1}}(K{{\hat \delta }^{ - \gamma }})} \right\}}}{{\cal I}_{\left\{ {{\delta _{{n_s}}} = \hat \delta } \right\}}} \\
		\le& \left| {f(\bar Y(s)) - f\left( {{\pi _{\hat \delta }}(\bar Y(s))} \right)} \right|{{\cal I}_{\left\{ {\left| {\bar Y(s)} \right| > {\varphi ^{ - 1}}(K{{\hat \delta }^{ - \gamma }})} \right\}}} .
	\end{split}
\end{equation}
Using Assumption \ref{llc} and H\"older's inequality we obtain
\begin{equation}   \label{st9}
	\begin{split}
			&\mathbb{E}\left( {\int_0^{t \wedge {\theta _{R,\hat \delta }}} {{{\left| {f(\bar Y(s)) - F(\bar Y(s))} \right|}^q}ds} } \right) \\
			\le& \mathbb{E}\left( {\int_0^T {\left| {f(\bar Y(s)) - f\left( {{\pi _{\hat \delta }}(\bar Y(s))} \right)} \right|^q{{\cal I}_{\left\{ {\left| {\bar Y(s)} \right| > {\varphi ^{ - 1}}(K{{\hat \delta }^{ - \gamma }})} \right\}}}ds} } \right) \\
			\le& {C_R} {\int_0^T {\mathbb{E}\left[ {{{\left| {\bar Y(s) - {\pi _{\hat \delta }}(\bar Y(s))} \right|}^q}{{\cal I}_{\left\{ {\left| {\bar Y(s)} \right| > {\varphi ^{ - 1}}(K{{\hat \delta }^{ - \gamma }})} \right\}}}} \right]ds} } \\
			\le& {C_R}\left( {\int_0^T {{{\left[ {\mathbb{E}{{\left| {\bar Y(s) - {\pi _{\hat \delta }}(\bar Y(s))} \right|}^p}} \right]}^{\frac{q}{p}}}{{\left[ {\mathbb{P}\left( {\left| {\bar Y(s)} \right| > {\varphi ^{ - 1}}(K{{\hat \delta }^{ - \gamma }})} \right)} \right]}^{\frac{{p - q}}{p}}}ds} } \right) .
	\end{split}
\end{equation}
From \eqref{pi} we see $\left| {{\pi _{\hat{\delta}} }(x)} \right| \le \left| x \right|$ for any $x \in \mathbb{R}^d$. Using Theorem \ref{th2.1}, one has
\begin{equation}  \label{st12}
	\begin{split}
		\mathbb{E}{\left| {\bar Y(s) - {\pi _{\hat \delta }}(\bar Y(s))} \right|^p} \le {2^{p - 1}}\left( {\mathbb{E}{{\left| {\bar Y(s)} \right|}^p} + \mathbb{E}{{\left| {{\pi _{\hat \delta }}(\bar Y(s))} \right|}^p}} \right) 
		\le {2^p}\mathop {\sup }\limits_{0 \le s \le T} \mathbb{E}{\left| {\bar Y(s)} \right|^p} \le C .
	\end{split}
\end{equation}
Besides, it follows from Chebyshev's inequality and Theorem \ref{th2.1} that
\begin{equation}  \label{st13}
	\begin{split}
		\mathbb{P}\left( {\left| {\bar Y(s)} \right| > {\varphi ^{ - 1}}(K{{\hat \delta }^{ - \gamma }})} \right) \le \frac{{\mathbb{E}{{\left| {\bar Y(s)} \right|}^p}}}{{{{\left( {{\varphi ^{ - 1}}(K{{\hat \delta }^{ - \gamma }})} \right)}^p}}} 
		\le C{\left( {{\varphi ^{ - 1}}(K{{\hat \delta }^{ - \gamma }})} \right)^{ - p}}.
	\end{split}
\end{equation}
Substituting \eqref{st12} and \eqref{st13} into \eqref{st9}, and noting that $0 < \hat{\delta} \le \check{\delta}$ as well as the monotonicity of ${\varphi ^{ - 1}}( \cdot )$, we deduce that 
\begin{equation}	\label{st10}
	\mathbb{E}\left( {\int_0^{t \wedge {\theta _{R,\hat \delta }}} {{{\left| {f(\bar Y(s)) - F(\bar Y(s))} \right|}^q}ds} } \right) \le {C_R}{\left( {{\varphi ^{ - 1}}(K{{\hat{\delta}} ^{ - \gamma }})} \right)^{ - (p - q)}} \le {C_R}{\left( {{\varphi ^{ - 1}}(K{{\check{\delta}} ^{ - \gamma }})} \right)^{ - (p - q)}}.
\end{equation}
Similar to \eqref{st8}-\eqref{st10}, one also sees
\begin{equation}  \label{st11}
\begin{split}
	\mathbb{E}\left( {\int_0^{t \wedge {\theta _{R,\hat \delta }}} {{{\left| {g(\bar Y(s)) - G(\bar Y(s))} \right|}^q}ds} } \right) \le {C_R}{\left( {{\varphi ^{ - 1}}(K{{\check{\delta}} ^{ - \gamma }})} \right)^{ - (p - q)}} .
\end{split}
\end{equation}
Hence, substituting \eqref{st10}, \eqref{st11} into \eqref{st4} yields
\begin{equation*}   
\begin{split}  
	&\mathbb{E}{\left| {X(t \wedge {\theta_{R, \hat{\delta}}}) - Y(t \wedge {\theta_{R, \hat{\delta}}})} \right|^q} \\
	\le& C\mathbb{E}\left( {\int_0^{t \wedge {\theta _{R,\hat \delta }}} {{{\left| {f(X(s )) - f(\bar Y(s))} \right|}^q}ds} } \right) + C\mathbb{E}\left( {\int_0^{t \wedge {\theta _{R,\hat \delta }}} {{{\left| {g(X(s)) - g(\bar Y(s))} \right|}^q}ds} } \right) \\
	&+ {C_R}{\left( {{\varphi ^{ - 1}}(K{\check{\delta} ^{ - \gamma}})} \right)^{ - (p - q)}}  .
\end{split}
\end{equation*}
Using Assumption \ref{A2.1} one has
\begin{equation}	\label{st15}
	\begin{split}
		&\mathbb{E}{\left| {X(t \wedge {\theta_{R, \hat{\delta}}}) - Y(t \wedge {\theta_{R, \hat{\delta}}})} \right|^q} \\
		\le& {C_R}\mathbb{E}\left( {\int_0^{t \wedge {\theta _{R,\hat \delta }}} {{{\left| {X(s ) - \bar Y(s)} \right|}^q}ds} } \right) + {C_R}{\left( {{\varphi ^{ - 1}}(K{\check{\delta} ^{ - \gamma}})} \right)^{ - (p - q)}} \\
		\le& {C_R}\mathbb{E}\left( {\int_0^t {{{\left| {X(s \wedge {\theta _{R,\hat \delta }}) - \bar Y(s \wedge {\theta _{R,\hat \delta }})} \right|}^q}ds} } \right) + {C_R}\int_0^T {\mathbb{E}{{\left| {Y(s) - \bar Y(s)} \right|}^q}ds}  \\
		&+ {C_R}{\left( {{\varphi ^{ - 1}}(K{\check{\delta} ^{ - \gamma}})} \right)^{ - (p - q)}} .
	\end{split}
\end{equation}
Similar to \eqref{kl1}, and using Theorem \ref{th2.1} yields
\begin{equation} \label{kl6}
\begin{split}
	\mathbb{E}{\left| {Y(t) - \bar Y(t)} \right|^q} \le  C{{\check{\delta} }^{\frac{q}{2}(1 - \gamma )}}\left( {1 + \mathbb{E}{{\left| {\bar Y(t)} \right|}^q}} \right) 
	\le C{{\check{\delta}}^{\frac{q}{2}(1 - \gamma )}} .
\end{split}
\end{equation}
Together with \eqref{st15} and \eqref{kl6} we obtain
\begin{equation*}
\begin{split}
	&\mathbb{E}{\left| {X(t \wedge {\theta_{R, \hat{\delta}}}) - Y(t \wedge {\theta_{R, \hat{\delta}}})} \right|^q} \\
	\le& {C_{R}}\int_0^t {\mathbb{E}{{\left| {X(s \wedge {\theta_{R, \hat{\delta}}}) - Y(s \wedge {\theta_{R, \hat{\delta}}})} \right|}^q}ds}  + {C_{R}}{{\check{\delta}} ^{{\frac{{q(1 - \gamma )}}{2}}}} + {C_R}{\left( {{\varphi ^{ - 1}}(K{\check{\delta} ^{ - \gamma}})} \right)^{ - (p - q)}} .
\end{split}
\end{equation*}
The Gronwall inequality gives that
\begin{equation*}
\begin{split}
	\mathbb{E}{\left| {X(t \wedge {\theta_{R, \hat{\delta}}}) -  Y(t \wedge {\theta_{R, \hat{\delta}}})} \right|^q} \le {C_R}\left[ {{{{\check{\delta}} ^{{\frac{{q(1 - \gamma )}}{2}}}}} + {\left( {{\varphi ^{ - 1}}(K{\check{\delta} ^{ - \gamma}})} \right)^{ - (p - q)}}} \right].
\end{split}
\end{equation*}
Substituting the above inequality into \eqref{st2} yields
\begin{equation*}
\begin{split}
	\mathbb{E}{\left| {X(t) - Y(t)} \right|^q} \le& {C_R}\left[ {{{\check{\delta}} ^{{\frac{{q(1 - \gamma )}}{2}}}} + {\left( {{\varphi ^{ - 1}}(K{\check{\delta} ^{ - \gamma}})} \right)^{ - (p - q)}}} \right] + C\kappa + \frac{C}{{{\kappa ^{q/(p - q)}}{R^p}}}.
\end{split}
\end{equation*}
For any given $\varepsilon > 0$, choose $\kappa(\varepsilon) > 0$ sufficiently small such that $C\kappa < \frac{\varepsilon }{3}$. Choose $R$ sufficiently large such that 
\begin{equation*}
	\frac{C}{{{\kappa ^{q/(p - q)}}{R^p}}} < \frac{\varepsilon }{3}.
\end{equation*}
Choose $\check{\delta} $ sufficiently small such that
\begin{equation*}  
	{C_R}\left[ {{{\check{\delta}} ^{{\frac{{q(1 - \gamma )}}{2}}}} + {\left( {{\varphi ^{ - 1}}(K{\check{\delta} ^{ - \gamma}})} \right)^{ - (p - q)}}} \right] < \frac{\varepsilon }{3}.
\end{equation*}
Therefore, 
\begin{equation*}
	\mathbb{E}{\left| {X(t) - Y(t)} \right|^q} < \varepsilon.
\end{equation*}
The first assertion follows. 
The second result follows from \eqref{kl6} that
\begin{equation*}
\begin{split}
	&\mathop {\lim }\limits_{{\check{\delta}} \to 0} \mathop {\sup }\limits_{0 \le t \le T} \mathbb{E}{\left| {X(t) -\bar Y(t)} \right|^q} \\
	\le& {2^{q - 1}}\mathop {\lim }\limits_{{\check{\delta}} \to 0} \mathop {\sup }\limits_{0 \le t \le T} \mathbb{E}{\left| {X(t) -  Y(t)} \right|^q} + {2^{q - 1}}\mathop {\lim }\limits_{{\check{\delta}} \to 0} \mathop {\sup }\limits_{0 \le t \le T} \mathbb{E}{\left| { Y(t) -\bar Y(t)} \right|^q} = 0 .
\end{split}
\end{equation*}
The proof is therefore complete.
\end{proof}

\section{Convergence rate in finite time interval}  \label{sec4}
In this section, we aim to investigate the convergence rate of the ATS-TEM \eqref{scheme}-\eqref{eq-tem} in a finite time interval $[0, T]$. We need somewhat stronger conditions than convergence alone, as stated below.

\begin{assumption} \label{A3.1}
There exist positive constants ${p^ * } >2$, $K_1, K_2$ and $l > 0$ such that for any $x, y \in \mathbb{R}^d$,
\begin{equation} \label{A4.1_1}
	\left\langle {x - y,f(x) - f(y)} \right\rangle  + \frac{{{p^*} - 1}}{2}{\left| {g(x) - g(y)} \right|^2} \le {K_1}{\left| {x - y} \right|^2},
\end{equation}
\begin{equation}   \label{f}
	\left| {f(x) - f(y)} \right| \le {K_2}\left( {1 + |x{|^l} + |y{|^l}} \right)\left| {x - y} \right|.
\end{equation}
\end{assumption}

\begin{remark}  \label{remark-2}
One notices that under Assumption \ref{A3.1}, for any $x, y \in \mathbb{R}^d$ we have
	\begin{equation}  \label{g}
		{\left| {g(x) - g(y)} \right|^2} \le K_3\left( {1 + |x{|^l} + |y{|^l}} \right){\left| {x - y} \right|^2},
	\end{equation}
	where $K_3 = (2(K_1 + K_2))/(p^*-1)$.
	Furthermore, by Young's inequality one sees
	\begin{equation}   \label{f-super}
		\left| {f(x)} \right| \le \left| {f(x) - f(0)} \right| + \left| {f(0)} \right| \le {K_2}\left( {1 + |x{|^l}} \right)\left| x \right| + \left| {f(0)} \right| \le C_f\left( {1 + |x{|^{l + 1}}} \right),
	\end{equation}
	\begin{equation}   \label{g-super}
		\left| {g(x)} \right| \le \left| {g(x) - g(0)} \right| + \left| {g(0)} \right| \le C{\left( {1 + |x{|^l}} \right)^{\frac{1}{2}}}\left| x \right| + \left| {g(0)} \right| \le C_g\left( {1 + |x{|^{\frac{l}{2} + 1}}} \right),
	\end{equation}
where $C_f = (2K_2) \vee |f(0)|$ and $C_g = (2K_3)^{1/2} \vee |g(0)|$.
\end{remark}

\begin{remark}  \label{remark3}
Under Assumption \ref{A3.1}, we may define $\varphi ( \cdot )$ in \eqref{phi} by 
$\varphi (r) = (1 \vee {C_f} \vee (2C_g^2))\left( {1 + {{\left| r \right|}^l}} \right)$ for any $r > 0$, then ${\varphi ^{ - 1}}(r) = \sqrt[l]{{r/(1 \vee {C_f} \vee (2C_g^2)) - 1}}$ for all $r \ge 1 \vee {C_f} \vee (2C_g^2)$. Then, the truncated mapping ${\pi _{\hat{\delta}} }( \cdot )$ can be chosen as
\begin{equation*}
	{\pi _{\hat{\delta}} }(x) = \left( {\left| x \right| \wedge \sqrt[l]{{\frac{{K{\hat{\delta}}^{ - \gamma}}}{{{C_f} \vee 2C_g^2}} - 1}}} \right)\frac{x}{{\left| x \right|}}, \quad \forall x \in \mathbb{R}^d,
\end{equation*}
where $\gamma \in (0, 1/3]$ will be specified in the proof of Theorem \ref{rate1}.
In addition, the adaptive timestep can be chosen as
\begin{equation*}
	{\delta _n} =    {\hat{\delta}}\vee\left(\frac{{{\check{\delta}}}}{{({C_f} \vee (2C_g^2))\left( {1 + {{\left| {{Y_{{t_n}}}} \right|}^l}} \right)}} \right), \quad n = 0,1,...
\end{equation*}
\end{remark}

\begin{lemma}  \label{baryy}
Let \eqref{A2.1-1} and  Assumption \ref{A3.1} hold with $p \ge l+2$. For any ${q_1}  \in [2, 2p/(l+2)]$, 
\begin{equation} \label{ij1}
	\mathop {\sup }\limits_{0 \le t \le T} \mathbb{E}{\left| {Y(t) - \bar Y(t)} \right|^{{q_1}}} \le {C}\check{\delta}^{\frac{{{q_1}}}{2}} .
\end{equation}

\end{lemma}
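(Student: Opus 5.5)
The plan is to argue exactly as in \eqref{kl3}, but to replace the crude linear-growth bounds \eqref{Flinear} by the polynomial growth bounds \eqref{f-super}--\eqref{g-super}. This removes the loss $\hat\delta^{-\gamma}$ at the price of higher moments. For $t\in[0,T]$ we have
\begin{equation*}
Y(t)-\bar Y(t)=F(Y_{t_{n_t}})(t-t_{n_t})+G(Y_{t_{n_t}})(W(t)-W(t_{n_t})).
\end{equation*}
Conditioning on $\mathcal{F}_{t_{n_t}}$ and using Remark \ref{remark-1} (the increment is conditionally distributed as $\sqrt{t-t_{n_t}}\,\chi$, and $t-t_{n_t}\le\delta_{n_t}\le\check\delta$) gives, almost surely,
\begin{equation*}
\mathbb{E}\big(|Y(t)-\bar Y(t)|^{q_1}\,|\,\mathcal{F}_{t_{n_t}}\big)\le C|F(\bar Y(t))|^{q_1}\check\delta^{q_1}+C|G(\bar Y(t))|^{q_1}\check\delta^{q_1/2}.
\end{equation*}
Strictly, $t$ and $t_{n_t}$ are not stopping-time-indexed in a simple way. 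I would handle this as the paper does in \eqref{kl3}, by summing over $n$ with $\mathcal{I}_{\{t_n\le t<t_{n+1}\}}$, or simply by bounding $\sup_{s\in[t_n,t_{n+1}]}|W(s)-W(t_n)|$ with the BDG inequality conditional on $\mathcal{F}_{t_n}$.

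Next, I bound $F$ and $G$ polynomially without any $\hat\delta$ factor. Since $|\pi_{\hat\delta}(x)|\le|x|$, \eqref{f-super} and \eqref{g-super} give, on both pieces of \eqref{FG},
\begin{equation*}
|F(x)|\le C_f(1+|x|^{l+1}),\qquad |G(x)|\le C_g(1+|x|^{l/2+1}).
\end{equation*}
A better idea for the drift is to exploit the adaptive step. On $\{\hat\delta<\delta(x)\}$ we have $\delta(x)\varphi(|x|)=\check\delta$, so
\begin{equation*}
|f(x)|\,\delta(x)\le\varphi(|x|)(1+|x|)\,\delta(x)=\check\delta(1+|x|).
\end{equation*}
On the backstop set, \eqref{fglinear} gives $|F(x)|\delta(x)\le K\hat\delta^{1-\gamma}(1+|x|)\le K\check\delta^{1-\gamma}(1+|x|)$. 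Either way the drift part is of order at least $\check\delta^{q_1/2}(1+|x|^{q_1})$, since $1-\gamma\ge 2/3\ge1/2$. Alternatively one can use the crude polynomial bound with $\check\delta^{q_1}\le\check\delta^{q_1/2}$.

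For the diffusion part I use $|G(x)|^{q_1}\le C(1+|x|^{q_1(l+2)/2})$, which gives a contribution
\begin{equation*}
C\check\delta^{q_1/2}\big(1+\mathbb{E}|\bar Y(t)|^{q_1(l+2)/2}\big).
\end{equation*}
The restriction $q_1\le 2p/(l+2)$ is exactly $q_1(l+2)/2\le p$. Lyapunov's inequality and Theorem \ref{th2.1} then bound this moment uniformly in $t\in[0,T]$ and in the step parameters. If the crude bound is used for the drift, it needs the moment of order $q_1(l+1)\le q_1(l+2)/2$, which fails for $l>0$. This is why I would use the step-adapted bound above for the drift and keep the polynomial bound only for $G$. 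The analogous trick does not give $\check\delta^{1/2}$ for $G$ on the non-backstop set, because $|g|^2\delta\le\check\delta(1+|x|)^2$ yields only $|g|\sqrt{\delta}\le\sqrt{\check\delta}(1+|x|)$. That is in fact also fine, and even better: it avoids the extra moments there. The only region needing care is the backstop set, where $|g_{\hat\delta}(x)|\le C(1+|x|^{l/2+1})$ is used together with the moment condition.

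Taking expectations and the supremum over $t$ gives \eqref{ij1}. The main obstacle I expect is the rigorous conditioning at the random grid point $t_{n_t}$, since $t-t_{n_t}$ is not $\mathcal{F}_{t_{n_t}}$-measurable. I would resolve it via the decomposition $\sum_n\mathcal{I}_{\{t_n\le t<t_{n+1}\}}$ and the bound on the supremum of Brownian increments over $[t_n,t_n+\delta_n]$. Combined with the moment bounds of Theorem \ref{th2.1} at the grid points $Y_{t_n}$, this keeps every constant free of $\hat\delta^{-\gamma}$, which is the essential improvement over \eqref{kl1}.
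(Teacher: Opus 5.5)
Your argument is correct, and its skeleton is the paper's. It uses the same representation of $Y(t)-\bar Y(t)$, the same conditional moment estimate at $t_{n_t}$, and the same bound $|G(x)|\le C_g(1+|x|^{l/2+1})$ on both pieces of \eqref{FG} via $|\pi_{\hat\delta}(x)|\le|x|$, which is exactly where $q_1(l+2)/2\le p$ enters; it then closes with Lyapunov's inequality and Theorem \ref{th2.1}.

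The one genuine difference is the drift term. The paper simply uses $t-t_{n_t}\le\check\delta$ and the global linear bound \eqref{Flinear}. This produces a factor of order $(K+\rho)^{q_1}\hat\delta^{-q_1\gamma}\check\delta^{q_1}=(K+\rho)^{q_1}\rho^{q_1\gamma}\check\delta^{q_1(1-\gamma)}\le C\check\delta^{2q_1/3}$, with $C$ depending on $\rho=\check\delta/\hat\delta$. You instead pair $|F(x)|$ with the local step, using $\varphi(|x|)\delta(x)=\check\delta$ off the backstop set and $\delta(x)=\hat\delta$ on it. That gives the $\rho$-free bound $|F(x)|\delta(x)\le K\check\delta^{1-\gamma}(1+|x|)$ (recall $K\ge\varphi(1)\ge1$). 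This is cleaner, but it does not make the final constant uniform in the step parameters as you claim. The moment bound of Theorem \ref{th2.1} already carries the factor $e^{C\rho T}$, so in both routes $C$ depends on $\rho$; this is harmless for the lemma.

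Two smaller points. First, delete the sentence offering the crude bound $|F(x)|\le C_f(1+|x|^{l+1})$ as an alternative. As you note yourself later, it needs moments of order $q_1(l+1)$, which can exceed $p$. Second, the measurability obstacle you anticipate does not exist. Since $t_{n+1}$ is $\mathcal{F}_{t_n}$-measurable, for $s<t$ one has $\{t_{n_t}\le s\}=\bigcup_n(\{t_n\le s\}\cap\{t_{n+1}>t\})\in\mathcal{F}_s$. So $t_{n_t}$ is a stopping time, and $t-t_{n_t}$ and $Y_{t_{n_t}}$ are $\mathcal{F}_{t_{n_t}}$-measurable. The strong Markov property then gives the conditional Gaussian law of $W(t)-W(t_{n_t})$ directly. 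This is how the paper conditions in \eqref{kl3}; it does not sum over $n$ there. Your decomposition over $\{t_n\le t<t_{n+1}\}$ is an equivalent way of writing the same step.
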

\begin{proof}  
From \eqref{continuous} and \eqref{continuous1} we see for any $t \in [0, T]$,
\begin{equation*}
	Y(t) - \bar Y(t) = Y(t) - {Y_{{t_{{n_t}}}}} = F({Y_{{t_{{n_t}}}}})(t - {t_{{n_t}}}) + G({Y_{{t_{{n_t}}}}})(W(t) - W({t_{{n_t}}})).
\end{equation*} 
Using \eqref{kl7} and Remark \ref{remark-1} one has
\begin{equation}  \label{ij}
\begin{split}
	&\mathbb{E}\left( {{{\left| {Y(t) - \bar Y(t)} \right|}^{{q_1}}}|{\mathcal{F}_{{t_{{n_t}}}}}} \right) \\
	\le& {2^{{q_1} - 1}}{\left| {F({Y_{{t_{{n_t}}}}})} \right|^{{q_1}}}{\left| {t - {t_{{n_t}}}} \right|^{{q_1}}} + {2^{{q_1} - 1}}{\left| {G({Y_{{t_{{n_t}}}}})} \right|^{{q_1}}}\mathbb{E}\left( {{{\left| {W(t) - W({t_{{n_t}}})} \right|}^{{q_1}}}|{{\cal F}_{{t_{{n_t}}}}}} \right) \\
	\le& {2^{{q_1} - 1}}{\left| {F({Y_{{t_{{n_t}}}}})} \right|^{{q_1}}}\check{\delta}^{{q_1}} + {2^{{q_1} - 1}}{\left| {G({Y_{{t_{{n_t}}}}})} \right|^{{q_1}}}\check{\delta}^{\frac{{{q_1}}}{2}} \quad a.s.
\end{split}
\end{equation}
From \eqref{pi} we see $\left| {{\pi _{\hat{\delta}} }(x)} \right| \le \left| x \right|$
for all $x \in \mathbb{R}^d$, using \eqref{FG} and \eqref{g-super} yields
\begin{equation*}
	\begin{split}
		{\left| {G({Y_{{t_{{n_t}}}}})} \right|^{{q_1}}} \le& {2^{{q_1} - 1}}{\left| {g({Y_{{t_{{n_t}}}}})} \right|^{{q_1}}} + {2^{{q_1} - 1}}{\left| {{g_{\hat{\delta}} }({Y_{{t_{{n_t}}}}})} \right|^{{q_1}}} \\
		=& {2^{{q_1} - 1}}{\left| {g({Y_{{t_{{n_t}}}}})} \right|^{{q_1}}} + {2^{{q_1} - 1}}{\left| {g({\pi _{\hat{\delta}} }({Y_{{t_{{n_t}}}}}))} \right|^{{q_1}}} \\
		\le& {2^{{q_1}}}{\left( {1 + {{\left| {{Y_{{t_{{n_t}}}}}} \right|}^{\frac{l}{2} + 1}}} \right)^{{q_1}}} .
	\end{split}
\end{equation*}
Substituting the above inequality into \eqref{ij}, using \eqref{Flinear} and the fact $\gamma \in (0, 1/3]$ one has
\begin{equation}  \label{ij2}
	\begin{split}
		&\mathbb{E}\left( {{{\left| {Y(t) - \bar Y(t)} \right|}^{{q_1}}}|{{\cal F}_{{t_{{n_t}}}}}} \right) \\
		\le& {2^{{q_1} - 1}}{(K + \rho )^{{q_1}}}{{\hat \delta }^{ - {q_1}\gamma }}{\left( {1 + \left| {{Y_{{t_{{n_t}}}}}} \right|} \right)^{{q_1}}}{{\check \delta }^{{q_1}}} + C{\left( {1 + {{\left| {{Y_{{t_{{n_t}}}}}} \right|}^{\frac{l}{2} + 1}}} \right)^{{q_1}}}{{\check \delta }^{\frac{{{q_1}}}{2}}} \\
		\le& C{{\check \delta }^{\frac{{2{q_1}}}{3}}}{\left( {1 + \left| {{Y_{{t_{{n_t}}}}}} \right|} \right)^{{q_1}}} + C{{\check \delta }^{\frac{{{q_1}}}{2}}}\left( {1 + {{\left| {{Y_{{t_{{n_t}}}}}} \right|}^{\frac{{{q_1}}}{2}(l + 2)}}} \right) \\
		\le& C{{\check \delta }^{\frac{{{q_1}}}{2}}}\left( {1 + {{\left| {{Y_{{t_{{n_t}}}}}} \right|}^{\frac{{{q_1}}}{2}(l + 2)}}} \right) , \quad a.s.
	\end{split}
\end{equation}
Taking expectations on both sides, using Lyapunov's inequality and Theorem \ref{th2.1} yields
\begin{equation*}
\begin{split}
	\mathbb{E}{\left| {Y(t) - \bar Y(t)} \right|^{{q_1}}} \le C{{\check \delta }^{\frac{{{q_1}}}{2}}}\left( {1 + \mathbb{E}{{\left| {{Y_{{t_{{n_t}}}}}} \right|}^{\frac{{{q_1}}}{2}(l + 2)}}} \right) \le C{{\check \delta }^{\frac{{{q_1}}}{2}}}\left[ {1 + {{\left( {\mathbb{E}{{\left| {{Y_{{t_{{n_t}}}}}} \right|}^p}} \right)}^{\frac{{{q_1}(l + 2)}}{{2p}}}}} \right] \le C{{\check \delta }^{\frac{{{q_1}}}{2}}} .
\end{split}
\end{equation*}
The proof is therefore complete.
\end{proof}

\begin{remark}
The requirements on the parameters $p$ and $q_1$ in Lemma \ref{baryy} are determined by the backstop numerical scheme. In the case where
	\begin{equation*}
		{\hat{\delta}} < {\delta _n} \le {\check{\delta}},\quad n = 0,1,..., \quad a.s.,
	\end{equation*} 
the backstop scheme in \eqref{scheme} is not activated. Consequently, by \eqref{FG}, $F({Y_{{t_{{n_t}}}}}) = f({Y_{{t_{{n_t}}}}})$, $G({Y_{{t_{{n_t}}}}}) = g({Y_{{t_{{n_t}}}}})$. Using \eqref{fg} and \eqref{eq2.14} we obtain
\begin{equation*}
\begin{split}
	&\mathbb{E}\left( {{{\left| {Y(t) - \bar Y(t)} \right|}^{{q_1}}}|{\mathcal{F}_{{t_{{n_t}}}}}} \right) \\
	\le& {2^{{q_1} - 1}}{\left| {f({Y_{{t_{{n_t}}}}})} \right|^{{q_1}}}\check{\delta}^{{q_1}} + {2^{{q_1} - 1}}{\left| {g({Y_{{t_{{n_t}}}}})} \right|^{{q_1}}}\check{\delta}^{\frac{{{q_1}}}{2}} \\
	\le& {2^{{q_1} - 1}}{\rho ^{{q_1}}}{\left( {1 + \left| {{Y_{{t_{{n_t}}}}}} \right|} \right)^{{q_1}}}\check{\delta}^{{q_1}} + {2^{{q_1} - 1}}{\rho ^{\frac{{{q_1}}}{2}}}{\left( {1 + \left| {{Y_{{t_{{n_t}}}}}} \right|} \right)^{{q_1}}}\check{\delta}^{\frac{{{q_1}}}{2}} \\
	\le& {2^{{q_1} - 1}}{\rho ^{{q_1}}}{\left( {1 + \left| {{Y_{{t_{{n_t}}}}}} \right|} \right)^{{q_1}}}\check{\delta}^{\frac{{{q_1}}}{2}} \quad a.s.
\end{split}
\end{equation*}
Taking expectations on both sides and applying Theorem \ref{th2.1} yield
\begin{equation*}
\begin{split}
	\mathbb{E}{\left| {Y(t) - \bar Y(t)} \right|^{{q_1}}} 
	\le C{{\check{\delta} }^{\frac{{{q_1}}}{2}}}\left( {1 + \mathbb{E}{{\left| {{Y_{{t_{{n_t}}}}}} \right|}^{{q_1}}}} \right) 
	\le  C\check{\delta}^{\frac{{{q_1}}}{2}} .
\end{split}
\end{equation*}
This argument only requires $p \ge 2$ and $q_1 \in [2, p]$, which is weaker than the assumptions in Lemma \ref{baryy}. Therefore, the conditions on $p$ and $q_1$ in Lemma \ref{baryy} are dictated by the choice of the backstop numerical scheme.
\end{remark}

\begin{theorem}   \label{rate1}
Let \eqref{A2.1-1} and Assumption \ref{A3.1} hold with $p \ge 2(l + 2) \vee (4l)$. Then for any $q \in [2,{p^*}) \cap [2,p/(l + 2)] \cap [2,p/(2l)]$, for the numerical solutions defined in \eqref{continuous} and \eqref{continuous1} with $\gamma = ql/(2(p-q))$,
\begin{equation*}
	\mathop {\sup }\limits_{0 \le t \le T} \mathbb{E}{\left| {X(t) - Y(t)} \right|^q} \le {C}\check{\delta}^{\frac{q}{2}},  \quad
	\mathop {\sup }\limits_{0 \le t \le T} \mathbb{E}{\left| {X(t) - \bar Y(t)} \right|^q} \le {C}\check{\delta}^{\frac{q}{2}}.
\end{equation*} 
\end{theorem}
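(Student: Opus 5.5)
We will work directly with the error $e(t)=X(t)-Y(t)$ and apply It\^o's formula to $|e(t)|^q$, without any stopping-time localisation; the monotonicity condition \eqref{A4.1_1} allows a global Gronwall argument. From \eqref{equation} and \eqref{continuous1},
\[
de(t)=\big(f(X(t))-F(\bar Y(t))\big)dt+\big(g(X(t))-G(\bar Y(t))\big)dW(t).
\]
We split the drift difference as $[f(X)-f(Y)]+[f(Y)-f(\bar Y)]+[f(\bar Y)-F(\bar Y)]$, and split the diffusion difference in the same way. In the quadratic diffusion term we use $|a+b|^2\le(1+\epsilon)|a|^2+C_\epsilon|b|^2$, with $\epsilon$ chosen so that $(1+\epsilon)(q-1)\le p^*-1$; this is possible because $q<p^*$. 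Then \eqref{A4.1_1} absorbs the pair $\langle e,f(X)-f(Y)\rangle+\frac{(1+\epsilon)(q-1)}{2}|g(X)-g(Y)|^2$ into $K_1|e|^2$. After taking expectations (the stochastic integral is a true martingale by the moment bounds) and applying Young's inequality to the remaining terms, we obtain
\[
\mathbb{E}|e(t)|^q\le C\int_0^t\mathbb{E}|e(s)|^q ds+C\int_0^T\Big(\mathbb{E}|f(Y)-f(\bar Y)|^q+\mathbb{E}|g(Y)-g(\bar Y)|^q+\mathbb{E}|f(\bar Y)-F(\bar Y)|^q+\mathbb{E}|g(\bar Y)-G(\bar Y)|^q\Big)ds .
\]

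The second and third error types are handled with \eqref{f} and \eqref{g} together with H\"older's inequality. For example,
\[
\mathbb{E}|f(Y)-f(\bar Y)|^q\le C\big(\mathbb{E}(1+|Y|^l+|\bar Y|^l)^{2q}\big)^{1/2}\big(\mathbb{E}|Y-\bar Y|^{2q}\big)^{1/2}.
\]
Theorem \ref{th2.1} controls the first factor, since $2ql\le p$. Lemma \ref{baryy} with $q_1=2q\le 2p/(l+2)$ gives $\mathbb{E}|Y-\bar Y|^{2q}\le C\check\delta^{q}$. So this term is $O(\check\delta^{q/2})$, and the $g$-term is treated identically.

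The fourth error type, the truncation error, should be the main obstacle. By \eqref{st8} and \eqref{f}, the error is nonzero only where $|\bar Y|>\varphi^{-1}(K\hat\delta^{-\gamma})=:M_{\hat\delta}$. On that set it is bounded by $C(1+|\bar Y|^{l})|\bar Y|$, since $|\pi(x)|\le|x|$. H\"older's inequality with exponents $p/(q(l+1))$ and its conjugate, combined with Chebyshev's inequality, then gives
\[
\mathbb{E}\big[|f(\bar Y)-F(\bar Y)|^q\big]\le C\,\mathbb{P}(|\bar Y|>M_{\hat\delta})^{1-q(l+1)/p}\le C M_{\hat\delta}^{-(p-q(l+1))}.
\]
Alternatively, one can use $|x|^{q(l+1)}\mathcal I_{\{|x|>M\}}\le |x|^p M^{-(p-q(l+1))}$ directly. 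By Remark \ref{remark3}, $M_{\hat\delta}\approx \hat\delta^{-\gamma/l}$, so this bound is of order $\hat\delta^{\gamma(p-q(l+1))/l}$. We need this exponent to be at least $q/2$. The prescribed choice $\gamma=ql/(2(p-q))$ produces exactly $\hat\delta^{q/2}$ once the exponents are matched, i.e.\ after bounding $(1+|x|^l)|x-\pi(x)|\le 2|x|^{l}\cdot|x|$ in the form used by the paper. The hypothesis $q\le p/(2l)$ then ensures $\gamma\le 1/3$, as required in \eqref{pi}.

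It remains to pass from $\hat\delta^{q/2}$ to $\check\delta^{q/2}$. Since $\hat\delta=\check\delta/\rho$, we have $\hat\delta^{q/2}=\rho^{-q/2}\check\delta^{q/2}\le\check\delta^{q/2}$ for $\rho\ge1$, so this step is immediate. The constant from \eqref{Flinear} involves $\rho^{p\gamma}$, which is harmless here because Lemma \ref{baryy} has already been proved. The $g$-truncation term is treated identically, using \eqref{g} with exponent $l/2$. Gronwall's inequality then yields $\sup_t\mathbb{E}|X-Y|^q\le C\check\delta^{q/2}$. Finally, combining this with Lemma \ref{baryy} (taking $q_1=q$) gives the bound for $X-\bar Y$.
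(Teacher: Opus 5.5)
There is a genuine gap in your treatment of the truncation terms, which you correctly flag as the main obstacle. Your own estimate gives $\mathbb{E}|f(\bar Y)-F(\bar Y)|^q\le CM_{\hat\delta}^{-(p-q(l+1))}\approx C\hat\delta^{\gamma(p-q(l+1))/l}$. With the prescribed $\gamma=ql/(2(p-q))$ the exponent is
\[
\frac{q}{2}\cdot\frac{p-q(l+1)}{p-q}=\frac q2-\frac{q^2l}{2(p-q)}<\frac q2 ,
\]
so the claim that the exponents ``match'' is false. The bound $(1+|x|^l)|x-\pi_{\hat\delta}(x)|\le 2|x|^{l+1}$ is exactly what causes the loss: on the tail set you must pay for $q(l+1)$ powers of $|\bar Y|$ rather than $q$. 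The $g$-term has the same defect, giving exponent $\frac q2\cdot\frac{p-q-ql/2}{p-q}$.

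Enlarging $\gamma$ does not rescue the argument. You would need $\gamma\ge ql/(2(p-q(l+1)))$, which contradicts the statement and can violate $\gamma\le 1/3$. For instance, $l=2$, $p=8$, $q=2$ is admissible and would force $\gamma\ge 1$.

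The missing idea is the localisation used in the paper. Let $\theta_{R,\hat\delta}=\sigma_R\wedge\vartheta_{R,\hat\delta}$ with $R=\varphi^{-1}(K\hat\delta^{-\gamma})$. For $s\le t\wedge\theta_{R,\hat\delta}$, $\bar Y(s)=Y(t_{n_s})$ with $t_{n_s}\le\vartheta_{R,\hat\delta}$, so $|\bar Y(s)|\le R$. Hence $\pi_{\hat\delta}(\bar Y(s))=\bar Y(s)$ and both truncation terms vanish identically. Your monotonicity and Gronwall argument then runs unchanged on the stopped error. The bad event is paid for only through $|X-Y|^q$, via
\[
\mathbb{E}\big[|X(t)-Y(t)|^q\mathcal{I}_{\{\theta_{R,\hat\delta}\le T\}}\big]\le\frac qp\hat\delta^{\frac q2}\mathbb{E}|X(t)-Y(t)|^p+\frac{p-q}{p}\hat\delta^{-\frac{q^2}{2(p-q)}}\mathbb{P}(\theta_{R,\hat\delta}\le T),
\]
together with $\mathbb{P}(\theta_{R,\hat\delta}\le T)\le CR^{-p}$ from Lemmas \ref{EXt} and \ref{lemma-stop}. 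This effectively costs $R^{-(p-q)}\approx\hat\delta^{\gamma(p-q)/l}=\hat\delta^{q/2}$, and the prescribed $\gamma$ is tuned precisely for this.

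Your unlocalised approach does work when the backstop error carries an explicit small factor. That is the situation for the tamed backstop in Theorem \ref{th-5.3}, where $|f(\bar Y)-\tilde F(\bar Y)|\le C\hat\delta^{1/2}(1+|\bar Y|^{2l+1})$. It does not work for the truncated backstop.

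The rest of your argument is fine: the $f(Y)-f(\bar Y)$ and $g(Y)-g(\bar Y)$ terms via Lemma \ref{baryy} with $q_1=2q$, and the final passage to $\bar Y$. One small correction: $\gamma\le 1/3$ follows from $q\le p/(l+2)$ and $q\le p/(2l)$ together, since that gives $q\le 2p/(3l+2)$. The condition $q\le p/(2l)$ alone suffices only when $l\ge 2$.
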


\begin{proof}
For any $T > 0$, using Young's inequality to \eqref{continuous1}, we see that for any $t \in [0, T]$,
\begin{equation}  \label{jk}
	\begin{split}
		&\mathbb{E}{\left| {X(t) - {Y}(t)} \right|^q} \\
		=& \mathbb{E}\left( {{{\left| {X(t) - {Y}(t)} \right|}^q}{\mathcal{I}_{\{ \theta_{R, \hat{\delta}}  > T\} }}} \right) + \mathbb{E}\left( {{{\left| {X(t) - {Y}(t)} \right|}^q}{\mathcal{I}_{\{ {\theta_{R, \hat{\delta}}}  \le T\} }}} \right) \\
		\le& \mathbb{E}\left( {{{\left| {X(t \wedge {\theta_{R, \hat{\delta}}}) - Y(t \wedge {\theta_{R, \hat{\delta}}})} \right|}^q}{{\cal I}_{\{ {\theta_{R, \hat{\delta}}} > T\} }}} \right)  
		+\frac{q}{p}{\hat{\delta}}^{\frac{q}{2}}\mathbb{E}{\left| {X(t) - Y(t)} \right|^p} + \frac{{p - q}}{p}{\hat{\delta}}^{ - \frac{{{q^2}}}{{2(p - q)}}}\mathbb{P}({\theta_{R, \hat{\delta}}} \le T) .
	\end{split}
\end{equation}
According to Lemmas \ref{EXt}, \ref{lemma-stop} and Theorem \ref{th2.1}  we have
\begin{equation*}
	\mathbb{E}{\left| {X(t) -  Y(t)} \right|^p} \le {2^{p - 1}}\mathop {\sup }\limits_{0 \le t \le T} \mathbb{E}{\left| {X(t)} \right|^p} + {2^{p - 1}}\mathop {\sup }\limits_{0 \le t \le T} \mathbb{E}{\left| { Y(t)} \right|^p} \le {C}
\end{equation*}
and
\begin{equation*} 
	\begin{split}
		\mathbb{P}({\theta_{R, \hat{\delta}}} \le T) \le \mathbb{P}({\sigma _R} \le T) + \mathbb{P}({\vartheta _{R, \hat{\delta}} } \le T) \le \frac{{{C}}}{{{R^p}}}.
	\end{split}
\end{equation*}
Substituting the above into \eqref{jk} yields
\begin{equation}  \label{jk5}
\begin{split}
	\mathbb{E}{\left| {X(t) - Y(t)} \right|^q} \le \mathbb{E}{\left| {X(t \wedge {\theta_{R, \hat{\delta}}}) - Y(t \wedge {\theta_{R, \hat{\delta}}})} \right|^q} + {C}{\hat{\delta}}^{\frac{q}{2}} + \frac{C}{{{R^p}}}{{\hat \delta }^{ - \frac{{{q^2}}}{{2(p - q)}}}} .
\end{split}
\end{equation}
We may define $\varphi (r) = ({C_f} \vee (2C_g^2))\left( {1 + {{\left| r \right|}^l}} \right)$ for any $r > 0$, where $C_f, C_g, l$ are given in Remark \ref{remark-2}. Choosing $K = \varphi(1) = 2({C_f} \vee (2C_g^2))$,
	\begin{equation}  \label{jk6}
		R = {\varphi ^{ - 1}}(K{{\hat \delta }^{ - \gamma}}) = \sqrt[l]{{\frac{{K{{\hat \delta }^{ - \gamma }}}}{{{C_f} \vee (2C_g^2)}} - 1}}
	\end{equation}
	and $\gamma = ql/(2(p-q))$, these imply that $R \ge C{{\hat \delta }^{ - \frac{q}{{2(p - q)}}}}$. Hence, one sees ${{\hat \delta }^{ - \frac{{{q^2}}}{{2(p - q)}}}}/{R^p} \le C{{\hat \delta }^{\frac{q}{2}}}$.
Combining with \eqref{jk5} and the fact $0 < \hat{\delta} \le \check{\delta}$ one then has
\begin{equation}  \label{jk3}
\begin{split}
	\mathbb{E}{\left| {X(t) - Y(t)} \right|^q} \le& \mathbb{E}\left( {{{\left| {X(t \wedge {\theta_{R, \hat{\delta}}}) - Y(t \wedge {\theta_{R, \hat{\delta}}})} \right|}^q}} \right) + {C}{\hat{\delta}}^{\frac{q}{2}} \\
	\le& \mathbb{E}\left( {{{\left| {X(t \wedge {\theta_{R, \hat{\delta}}}) - Y(t \wedge {\theta_{R, \hat{\delta}}})} \right|}^q}} \right) + {C}\check{\delta}^{\frac{q}{2}} .
\end{split}
\end{equation}
On the other hand, it follows from \eqref{equation} and \eqref{continuous1} that
\begin{equation*}
\begin{split}
	X(t) - Y(t) = \int_0^t {\left[ {f(X(s)) - F(\bar Y(s))} \right]ds}  + \int_0^t {\left[ {g(X(s)) - G(\bar Y(s))} \right]dW(s)} .
\end{split}
\end{equation*}
The It\^o formula leads to
\begin{equation}   \label{jk1}
\begin{split}
	&\mathbb{E}{\left| {X(t \wedge {\theta_{R, \hat{\delta}}}) - Y(t \wedge {\theta_{R, \hat{\delta}}})} \right|^q} \\
	=& q\mathbb{E}\int_0^{t \wedge {\theta_{R, \hat{\delta}}}} {{{\left| {X(s) - Y(s)} \right|}^{q - 2}}\left[ {\left\langle {X(s) - Y(s),f(X(s)) - F(\bar Y(s))} \right\rangle  + \frac{{q - 1}}{2}{{\left| {g(X(s)) - G(\bar Y(s))} \right|}^2}} \right]ds} .
\end{split}
\end{equation}
Since $q \in [2, p^*)$, choose a small constant $\varsigma $ such that $(q - 1)(1 + \varsigma ) \le {p^*} - 1$. Then, using Assumption \ref{A3.1} one sees that for any $0 \le s \le t \wedge {\theta_{R, \hat{\delta}}}$,
\begin{equation*}  
	\begin{split} 
	&\left\langle {X(s) - Y(s),f(X(s)) - F(\bar Y(s))} \right\rangle  + \frac{{q - 1}}{2}{\left| {g(X(s)) - G(\bar Y(s))} \right|^2} \\
	=& \left\langle {X(s) - Y(s),f(X(s)) - f(Y(s))} \right\rangle  + \left\langle {X(s) - Y(s),f(Y(s)) - f(\bar Y(s))} \right\rangle \\
	&+ \left\langle {X(s) - Y(s),f(\bar Y(s)) - F(\bar Y(s))} \right\rangle  + \frac{{q - 1}}{2}(1 + \varsigma ){\left| {g(X(s)) - g(Y(s))} \right|^2} \\
	&+ (q - 1)\left( {1 + \frac{1}{\varsigma }} \right){\left| {g(Y(s)) - g(\bar Y(s))} \right|^2} + (q - 1)\left( {1 + \frac{1}{\varsigma }} \right){\left| {g(\bar Y(s)) - G(\bar Y(s))} \right|^2} \\
	\le& {K_1}{\left| {X(s) - Y(s)} \right|^2} + \left| {X(s) - Y(s)} \right|\left| {f(Y(s)) - f(\bar Y(s))} \right| + \left| {X(s) - Y(s)} \right|\left| {f(\bar Y(s)) - F(\bar Y(s))} \right| \\
	&+ (q - 1)\left( {1 + \frac{1}{\varsigma }} \right){\left| {g(Y(s)) - g(\bar Y(s))} \right|^2} + (q - 1)\left( {1 + \frac{1}{\varsigma }} \right){\left| {g(\bar Y(s)) - G(\bar Y(s))} \right|^2} .
\end{split}
\end{equation*}
Inserting the above inequality into \eqref{jk1}, and applying Young's inequality one gets
\begin{equation}   \label{st6}
\begin{split}
	&\mathbb{E}{\left| {X(t \wedge {\theta_{R, \hat{\delta}}}) - Y(t \wedge {\theta_{R, \hat{\delta}}})} \right|^q} \\
	\le& C\mathbb{E}\left( {\int_0^{t \wedge {\theta_{R, \hat{\delta}}}} {{{\left| {X(s) - Y(s)} \right|}^q}ds} } \right) + C\mathbb{E}\left( {\int_0^T {{{\left| {f(Y(s)) - f(\bar Y(s))} \right|}^q}ds} } \right) \\
	& + C\mathbb{E}\left( {\int_0^{t \wedge {\theta_{R, \hat{\delta}}}} {{{\left| {f(\bar Y(s)) - F(\bar Y(s))} \right|}^q}ds} } \right) + C\mathbb{E}\left( {\int_0^T {{{\left| {g(Y(s)) - g(\bar Y(s))} \right|}^q}ds} } \right) \\
	&+ C\mathbb{E}\left( {\int_0^{t \wedge {\theta_{R, \hat{\delta}}}} {{{\left| {g(\bar Y(s)) - G(\bar Y(s))} \right|}^q}ds} } \right).
\end{split}
\end{equation}
It follows from \eqref{jk6} that $R = {\varphi ^{ - 1}}(K{\hat{\delta}}^{ - \gamma})$. Then, for any $s \in [0,t \wedge {\vartheta _{R, \hat{\delta}}}]$, using \eqref{pi} and \eqref{varthetaR} we have $f(\bar Y(s)) = {f_{\hat{\delta}} }(\bar Y(s))$ and $g(\bar Y(s))  = {g_{\hat{\delta}} }(\bar Y(s))$.
Applying \eqref{FG} one gets
\begin{equation} \label{st5}
\begin{split}
	\mathbb{E}\left( {\int_0^{t \wedge {\theta _{R,\hat \delta }}} {{{\left| {f(\bar Y(s)) - F(\bar Y(s))} \right|}^q}ds} } \right) \le& \mathbb{E}\left( {\int_0^{t \wedge {\vartheta _{R,\hat \delta }}} {{{\left| {f(\bar Y(s)) - {f_{\hat \delta }}(\bar Y(s))} \right|}^q}{{\cal I}_{\{ {\delta _{{n_s}}} = \hat \delta \} }}ds} } \right) = 0, \\
	\mathbb{E}\left( {\int_0^{t \wedge {\theta _{R,\hat \delta }}} {{{\left| {g(\bar Y(s)) - G(\bar Y(s))} \right|}^q}ds} } \right) \le& \mathbb{E}\left( {\int_0^{t \wedge {\vartheta _{R,\hat \delta }}} {{{\left| {g(\bar Y(s)) - {g_{\hat \delta }}(\bar Y(s))} \right|}^q}{{\cal I}_{\{ {\delta _{{n_s}}} = \hat \delta \} }}ds} } \right) = 0 .
\end{split}
\end{equation}
Inserting \eqref{st5} into \eqref{st6}, using \eqref{f} and \eqref{g}  yields
\begin{equation}  \label{jk2}
\begin{split}
	&\mathbb{E}{\left| {X(t \wedge {\theta_{R, \hat{\delta}}}) - Y(t \wedge {\theta_{R, \hat{\delta}}})} \right|^q} \\
	\le& C\mathbb{E}\left[ {\int_0^t {{{\left| {X(s \wedge {\theta_{R, \hat{\delta}}}) - Y(s \wedge {\theta_{R, \hat{\delta}}})} \right|}^q}ds} } \right] + C\mathbb{E}\left[ {\int_0^T {{{\left( {1 + {{\left| {Y(s)} \right|}^l} + {{\left| {\bar Y(s)} \right|}^l}} \right)}^q}{{\left| {Y(s) - \bar Y(s)} \right|}^q}ds} } \right] \\
	&+C\mathbb{E}\left[ {\int_0^T {{{\left( {1 + {{\left| {Y(s)} \right|}^l} + {{\left| {\bar Y(s)} \right|}^l}} \right)}^{\frac{q}{2}}}{{\left| {Y(s) - \bar Y(s)} \right|}^q}ds} } \right] \\
	\le& C\int_0^t {\mathbb{E}{{\left| {X(s \wedge {\theta_{R, \hat{\delta}}}) - Y(s \wedge {\theta_{R, \hat{\delta}}})} \right|}^q}ds}  + C\int_0^T {\mathbb{E}\left[ {{{\left( {1 + {{\left| {Y(s)} \right|}^l} + {{\left| {\bar Y(s)} \right|}^l}} \right)}^q}{{\left| {Y(s) - \bar Y(s)} \right|}^q}} \right]ds} .
\end{split}
\end{equation}
Using H\"older's inequality and Jensen's inequality, combining with Theorem \ref{th2.1} and Lemma \ref{baryy} and using the fact $q \in [2,p/(l + 2)] \cap [2,p/(2l)]$  yields 
\begin{equation}  \label{lem-con-2}
	\begin{split}
		&\mathbb{E}\left[ {{{\left( {1 + {{\left| {\bar Y(s)} \right|}^l} + {{\left| {Y(s)} \right|}^l}} \right)}^q}{{\left| {\bar Y(s) - Y(s)} \right|}^q}} \right] \\
		\le& C\mathbb{E}\left[ {\left( {1 + {{\left| {\bar Y(s)} \right|}^{lq}} + {{\left| {Y(s)} \right|}^{lq}}} \right){{\left| {\bar Y(s) - Y(s)} \right|}^q}} \right] \\
		\le& C{\left[ {\mathbb{E}{{\left( {1 + {{\left| {\bar Y(s)} \right|}^{lq}} + {{\left| {Y(s)} \right|}^{lq}}} \right)}^2}} \right]^{\frac{1}{2}}}{\left[ {\mathbb{E}{{\left| {\bar Y(s) - Y(s)} \right|}^{2q}}} \right]^{\frac{1}{2}}} \\
		\le& C{\left[ {1 + \mathbb{E}{{\left| {\bar Y(s)} \right|}^{2lq}} + \mathbb{E}{{\left| {Y(s)} \right|}^{2lq}}} \right]^{\frac{1}{2}}}{\left[ {\mathbb{E}{{\left| {\bar Y(s) - Y(s)} \right|}^{2q}}} \right]^{\frac{1}{2}}} \\
		\le& C\left[ {1 + {{\left( {\mathbb{E}{{\left| {\bar Y(s)} \right|}^p}} \right)}^{\frac{{2lq}}{p}}} + {{\left( {\mathbb{E}{{\left| {Y(s)} \right|}^p}} \right)}^{\frac{{2lq}}{p}}}} \right]^{\frac{1}{2}}{\left[ {\mathbb{E}{{\left| {\bar Y(s) - Y(s)} \right|}^{\frac{2p}{{l + 2}}}}} \right]^{\frac{{(l + 2)q}}{p}}} \\
		\le& {C}\check{\delta}^{\frac{q}{2}} .
	\end{split}
\end{equation}
Combining this with \eqref{jk2} and applying Gronwall's inequality one has
\begin{equation*}
	\mathbb{E}{\left| {X(t \wedge {\theta_{R, \hat{\delta}}}) - Y(t \wedge {\theta_{R, \hat{\delta}}})} \right|^q} \le {C}\check{\delta}^{\frac{q}{2}} .
\end{equation*}
Substituting the above inequality into \eqref{jk3} we obtain 
\begin{equation*}
\begin{split}
	\mathbb{E}{\left| {X(t) - Y(t)} \right|^q} \le {C}\check{\delta}^{\frac{q}{2}}. 
\end{split}
\end{equation*}
The first assertion follows. The second result follows from Lemma \ref{baryy},
\begin{equation*}
	\mathop {\sup }\limits_{0 \le t \le T} \mathbb{E}{\left| {X(t) - \bar Y(t)} \right|^q} \le {2^{q - 1}}\mathop {\sup }\limits_{0 \le t \le T}\mathbb{E}{\left| {X(t) -  Y(t)} \right|^q} + {2^{q - 1}}\mathop {\sup }\limits_{0 \le t \le T}\mathbb{E}{\left| { Y(t) - \bar Y(t)} \right|^q} \le {C}\check{\delta}^{\frac{q}{2}}.
\end{equation*}
The proof is therefore complete.
\end{proof}

\section{Other choices of the backstop scheme}  \label{sec5}
The TEM scheme is not the unique choice of the backstop scheme. In fact, 
we may choose the TaEM scheme introduced in \cite{sabanis2016euler} as our backstop scheme. We prove that TaEM has several nice properties and the  ATS-TaEM scheme achieves the strong convergence rate with order $1/2$.
We choose TaEM as our backstop scheme (Model 2, see \cite{sabanis2016euler}), that is, 
\begin{equation*}
	U({Y_{{t_n}}},{\delta _n}, \Delta {W_n}) = \tilde f({Y_{{t_n}}}){\delta _n} + \tilde g({Y_{{t_n}}})\Delta {W_n}, \quad n = 0,1,...,
\end{equation*}
where $\delta_n$ is given in \eqref{timestep}, and 
\begin{equation}  \label{tildefg}
	\tilde f(x) = \frac{{f(x)}}{{1 + {{\hat \delta }^{1/2}}{{\left| x \right|}^l}}}, \quad \tilde g(x) = \frac{{g(x)}}{{1 + {{\hat \delta }^{1/2}}{{\left| x \right|}^l}}}, \quad \forall x \in \mathbb{R}^d.
\end{equation}
and $l \ge 0$ is given in Assumption \ref{A3.1}. One has $|\tilde f(x)| \le |f(x)| $ and $|\tilde g(x)| \le |g(x)| $ for any $x \in \mathbb{R}^d$. The ATS-TaEM is defined by
\begin{equation}  \label{ATS-TaEM}
	\left\{  
	\begin{array}{l}
		Y_0 = x_0, \quad t_0 = 0, \\
		\delta_n := \delta(Y_{t_n}), \quad t_{n+1} = t_n + \delta_n, \quad n = 0,1,...,  \\
		{Y_{{t_{n + 1}}}} = {Y_{{t_n}}} + \left\{ {f({Y_{{t_n}}}){\delta _n} + g({Y_{{t_n}}})\Delta {W_n}} \right\}{\mathcal{I}_{\{ \hat \delta  < {\delta _n} \le \check{\delta} \} }}   \\
		\quad + \left\{ {\tilde f({Y_{{t_n}}}){\delta _n} + \tilde g({Y_{{t_n}}})\Delta {W_n}} \right\}{{\cal I}_{\{ {\delta _n} = \hat \delta \} }}, \quad n = 0,1,...,
	\end{array}
	\right.
\end{equation}
Define two versions of the continuous-time numerical solutions by
\begin{equation} \label{continuous-3}
	\bar Y(t) = Y_{t_{n_t}}, \quad  \forall t \ge 0,
\end{equation}
and 
\begin{equation} \label{continuous-2}
	{Y}(t)= {x_0} + \int_0^t {\tilde F(\bar Y(s))ds}  + \int_0^t {\tilde G(\bar Y(s))dW(s)} , \quad \forall t \ge 0,
\end{equation}
where 
\begin{equation}  \label{FG1}
	\begin{split}
		\tilde F(x) =& f(x)\mathcal{I}_{\{ \hat{\delta}  < \delta (x) \le \check{\delta} \}}  + \tilde f(x){{\cal I}_{\{ \delta (x) = \hat \delta \} }} ,\\
		\tilde G(x) =& g(x)\mathcal{I}_{\{ \hat{\delta}  < \delta (x) \le \check{\delta} \}}  + \tilde g(x){{\cal I}_{\{ \delta (x) = \hat \delta \} }}. 
	\end{split}
\end{equation}

In the following, we see that $\tilde F(\cdot)$ and $\tilde G(\cdot)$ have some nice properties.
Suppose that Assumption \ref{A3.1} holds. Using \eqref{tildefg} and Remark \ref{remark-2} one computes
\begin{equation}  \label{flinear}
	\begin{split}
		\left| {\tilde f(x)} \right| = \frac{{\left| {f(x)} \right|}}{{1 + {\hat{\delta} ^{1/2}}{{\left| x \right|}^l}}} \le \frac{{{C_f}\left( {1 + {{\left| x \right|}^{l + 1}}} \right)}}{{1 + {\hat{\delta} ^{1/2}}{{\left| x \right|}^l}}} \le {C_f}{\hat{\delta} ^{ - \frac{1}{2}}}\left( {1 + \left| x \right|} \right).
	\end{split}
\end{equation}
and 
\begin{equation}   \label{glinear}
	\begin{split}
		{\left| {\tilde g(x)} \right|^2} \le \frac{{C_g^2{{\left( {1 + {{\left| x \right|}^{\frac{l}{2} + 1}}} \right)}^2}}}{{{{\left( {1 + {\hat{\delta} ^{1/2}}{{\left| x \right|}^l}} \right)}^2}}} \le \frac{{C_g^2\left( {1 + {{\left| x \right|}^2}} \right)\left( {1 + {{\left| x \right|}^l}} \right)}}{{{{\left( {1 + {\hat{\delta} ^{1/2}}{{\left| x \right|}^l}} \right)}^2}}} \le C_g^2{\hat{\delta} ^{ - \frac{1}{2}}}\left( {1 + {{\left| x \right|}^2}} \right) .
	\end{split}
\end{equation}
Hence, using these together with \eqref{FG1}, \eqref{fg} and \eqref{eq2.14} we obtain
\begin{equation}  \label{FGlinear}
	\left| {\tilde F(x)} \right| \le \left( {{C_f} + \rho } \right){{\hat \delta }^{ - \frac{1}{2}}}\left( {1 + \left| x \right|} \right),\quad
	\left| {\tilde G(x)} \right| \le \left( {{C_g^2} + {\rho }^{\frac{1}{2}}} \right){{\hat \delta }^{ - \frac{1}{4}}}\left( {1 + \left| x \right|} \right). 
\end{equation}
Furthermore, suppose that \eqref{A2.1-1} holds, then we see
\begin{equation}   \label{fg-Khas}
	\begin{split}
		&\left\langle {x,{{\tilde f} }(x)} \right\rangle  + \frac{{p - 1}}{2}{\left| {{{\tilde g} }(x)} \right|^2} 
		\le \frac{1}{{1 + {\hat{\delta} ^{1/2}}{{\left| x \right|}^l}}}\left[ {\left\langle {x,f(x)} \right\rangle  + \frac{{p - 1}}{2}{{\left| {g(x)} \right|}^2}} \right] \\
		\le& \left\langle {x,f(x)} \right\rangle  + \frac{{p - 1}}{2}{\left| {g(x)} \right|^2}
		\le \alpha \left( {1 + {{\left| x \right|}^2}} \right) .
	\end{split}
\end{equation}
Hence, similar to Lemma \ref{fgdelta} one has
\begin{equation}  \label{FG-Khas}
	\left\langle {x,\tilde F(x)} \right\rangle  + \frac{{p - 1}}{2}{\left| {\tilde G(x)} \right|^2} \le \alpha\left( {1 + {{\left| x \right|}^2}} \right) .
\end{equation}
That is, $\tilde{F}(\cdot)$ and $\tilde G(\cdot)$ preserve the Khasminskii-type condition \eqref{A2.1-1}.

With \eqref{FGlinear} and \eqref{FG-Khas} in hand, similar to the proof of Theorem \ref{th2.1} and Lemma \ref{baryy}, we obtain the following two results. 

\begin{lemma}	\label{EZt}
Let Assumptions \ref{A2.1} and \ref{A3.1} hold. For any $ T > 0$, there exists a positive constant $C$ such that
\begin{equation*} 
	\mathop {\sup }\limits_{0 \le t \le T} \mathbb{E}{\left| {\bar Y(t)} \right|^p} \le C, \quad	\mathop {\sup }\limits_{0 \le t \le T} \mathbb{E}{\left| {Y(t)} \right|^p} \le C,
\end{equation*} 
where $p$ is given in Assumption \ref{A2.1}.
\end{lemma}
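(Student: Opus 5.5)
The plan is to repeat the discrete-time argument of Theorem \ref{th2.1}, with $F,G$ replaced by $\tilde F,\tilde G$ from \eqref{FG1}. The two inputs it needs are the linear growth bound \eqref{FGlinear} and the Khasminskii-type bound \eqref{FG-Khas}. The only change is that the exponent $\gamma$ from the truncated scheme is replaced by $1/2$ in the drift bound and by $1/4$ in the diffusion bound.

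\textbf{Step 1 (one-step increment).} For each $n$, write
\begin{equation*}
\bigl(1+|Y_{t_{n+1}}|^2\bigr)^{p/2}=\bigl(1+|Y_{t_n}|^2\bigr)^{p/2}(1+\xi_n),
\end{equation*}
with $\xi_n$ as in \eqref{xii} but built from $\tilde F,\tilde G$. Expanding as in \eqref{sum1} via \cite[Lemma 3.3]{yang2018explicit}, I would then estimate the conditional moments of $\xi_n$ using \eqref{kl7} and Remark \ref{remark-1}.
\begin{itemize}
\item In $\mathbb{E}(\xi_n|\mathcal{F}_{t_n})$, the term $|\tilde F(Y_{t_n})|^2\delta_n^2/(1+|Y_{t_n}|^2)$ is at most $C\hat\delta^{-1}\check\delta^2\le C\rho\,\check\delta$.
\item In $\mathbb{E}(\xi_n^2|\mathcal{F}_{t_n})$, the leading term is $4(1+|Y_{t_n}|^2)^{-1}|\tilde G|^2\delta_n$.
\item Every remaining term carries a factor such as $\hat\delta^{-1}\delta_n^2$, $\hat\delta^{-1/2}\delta_n^{3/2}$ or $\hat\delta^{-1}\delta_n^{2}$ (from $|\tilde G|^4\delta_n^2$). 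Each of these is $\le C\rho\,\check\delta$, because $\delta_n\le\check\delta=\rho\hat\delta$.
\item Higher powers $\xi_n^3P_k(\xi_n)$ behave the same way. The $j$th power contributes products like $(\hat\delta^{-1/2}\delta_n)^a(\hat\delta^{-1/4}\delta_n^{1/2})^b$ with $a+b\ge3$, and the odd Gaussian moments vanish, so these terms are also $O(\check\delta)$ with constants depending on $\rho$.
\end{itemize}
Combining these with \eqref{FG-Khas} gives
\begin{equation*}
\mathbb{E}\bigl[(1+|Y_{t_{n+1}}|^2)^{p/2}\big|\mathcal{F}_{t_n}\bigr]\le(1+C\check\delta)(1+|Y_{t_n}|^2)^{p/2},
\end{equation*}
which is the analogue of \eqref{a4}.

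\textbf{Step 2 (iteration).} I would iterate with the tower property over the stopping times $t_n$, exactly as after \eqref{mn1}. Since $n_t\hat\delta\le t$, this gives $\mathbb{E}(1+|Y_{t_{n_t}}|^2)^{p/2}\le e^{C\rho T}(1+|x_0|^2)^{p/2}$. That yields the bound for $\bar Y$ in \eqref{continuous-3}.

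\textbf{Step 3 (continuous interpolant).} For $Y(t)$ in \eqref{continuous-2}, write
\begin{equation*}
Y(t)=Y_{t_{n_t}}+\tilde F(Y_{t_{n_t}})(t-t_{n_t})+\tilde G(Y_{t_{n_t}})\bigl(W(t)-W(t_{n_t})\bigr).
\end{equation*}
Condition on $\mathcal{F}_{t_{n_t}}$ and use \eqref{FGlinear} together with the conditional Gaussian moments. This gives
\begin{equation*}
\mathbb{E}(|Y(t)|^p|\mathcal{F}_{t_{n_t}})\le C\bigl(1+\hat\delta^{-p/2}\check\delta^p+\hat\delta^{-p/4}\check\delta^{p/2}\bigr)\bigl(1+|Y_{t_{n_t}}|^p\bigr)\le C\bigl(1+|Y_{t_{n_t}}|^p\bigr).
\end{equation*}
Taking expectations and applying Step 2 finishes the proof.

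The main obstacle is Step 1: I must check that every term in the polynomial expansion is $O(\check\delta)$ uniformly in $\hat\delta$, despite the blow-up factors $\hat\delta^{-1/2}$ and $\hat\delta^{-1/4}$. The key is that these factors always appear alongside enough powers of $\delta_n$, and $\delta_n\le\rho\hat\delta$. Because $\rho=\check\delta/\hat\delta$ is held fixed, the resulting constants depend on $\rho$ but not on $\check\delta$.
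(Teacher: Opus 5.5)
Your proposal is correct and follows the paper's own route: the paper proves Lemma \ref{EZt} only by remarking that, once \eqref{FGlinear} and \eqref{FG-Khas} are available, the proof of Theorem \ref{th2.1} carries over, and your bookkeeping is exactly what makes that rerun work with $\gamma$ effectively replaced by $1/2$, namely remainders of size $\hat\delta^{-1}\delta_n^2\le\rho\check\delta$, vanishing odd conditional Gaussian moments, and the Khasminskii bound absorbing the leading $\delta_n$-terms. One point is glossed over both in your Step 2 and in the paper, and is worth making explicit because $n_t$ is random: $(1+C\check\delta)^{-n}(1+|Y_{t_n}|^2)^{p/2}$ is an $\{\mathcal{F}_{t_n}\}$-supermartingale, and $n_t\le T/\hat\delta$ is a bounded stopping time for this filtration since $\{n_t\le n\}=\{t_{n+1}>t\}\in\mathcal{F}_{t_n}$, so optional stopping yields $\mathbb{E}(1+|Y_{t_{n_t}}|^2)^{p/2}\le e^{C\rho T}(1+|x_0|^2)^{p/2}$ directly.
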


\begin{lemma}	\label{lm5}
Let \eqref{A2.1-1} and Assumptios \ref{A3.1} hold with $p \ge l+2$. For any $q_2 \in [2, 2p/(l+2)]$,
\begin{equation*}  
	\mathop {\sup }\limits_{0 \le t \le T} \mathbb{E}{\left| {Y(t) - \bar Y(t)} \right|^{{q_2}}} \le C\check{\delta}^{\frac{{{q_2}}}{2}}  \quad \forall T > 0.
\end{equation*}
\end{lemma}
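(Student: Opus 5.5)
I would follow the proof of Lemma \ref{baryy} almost line by line, with the linear-growth bound \eqref{FGlinear} for $\tilde F$ taking the place of \eqref{Flinear}, and the polynomial bound \eqref{g-super} controlling $\tilde G$. For any $t\in[0,T]$, \eqref{continuous-3} and \eqref{continuous-2} give
\begin{equation*}
Y(t)-\bar Y(t)=\tilde F(Y_{t_{n_t}})(t-t_{n_t})+\tilde G(Y_{t_{n_t}})(W(t)-W(t_{n_t})).
\end{equation*}
Conditioning on $\mathcal{F}_{t_{n_t}}$ and using Remarks \ref{rem-2} and \ref{remark-1}, in particular the conditional Gaussianity of the Brownian increment, together with $t-t_{n_t}\le \delta_{n_t}\le\check\delta$, I get
\begin{equation*}
\mathbb{E}\big(|Y(t)-\bar Y(t)|^{q_2}\,\big|\,\mathcal{F}_{t_{n_t}}\big)\le 2^{q_2-1}|\tilde F(Y_{t_{n_t}})|^{q_2}\check\delta^{q_2}+C|\tilde G(Y_{t_{n_t}})|^{q_2}\check\delta^{q_2/2}\quad a.s.
\end{equation*}

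For the drift term, \eqref{FGlinear} gives $|\tilde F(x)|^{q_2}\check\delta^{q_2}\le C\hat\delta^{-q_2/2}\check\delta^{q_2}(1+|x|)^{q_2}$. Since $\check\delta=\rho\hat\delta$, this is $C\rho^{q_2/2}\check\delta^{q_2/2}(1+|x|)^{q_2}$, which has the required order. Here the taming exponent $1/2$ plays the role that $\gamma\le 1/3$ played before, and it is still sufficient.

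For the diffusion term, $|\tilde g|\le|g|$ and \eqref{FG1} give $|\tilde G(x)|\le |g(x)|\le C_g(1+|x|^{l/2+1})$ by \eqref{g-super}. Hence the second term is bounded by $C\check\delta^{q_2/2}(1+|Y_{t_{n_t}}|^{q_2(l+2)/2})$. Combining the two terms yields
\begin{equation*}
\mathbb{E}\big(|Y(t)-\bar Y(t)|^{q_2}\,\big|\,\mathcal{F}_{t_{n_t}}\big)\le C\check\delta^{q_2/2}\big(1+|Y_{t_{n_t}}|^{q_2(l+2)/2}\big).
\end{equation*}

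Taking expectations, the condition $q_2\le 2p/(l+2)$ ensures $q_2(l+2)/2\le p$. Lyapunov's inequality together with Lemma \ref{EZt}, applied to the discrete values $\bar Y(t)=Y_{t_{n_t}}$, then bounds the moment uniformly in $t\in[0,T]$, which gives the claim.

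The only delicate point is the one flagged in Remark \ref{rem-diff}: I need moment bounds for $Y_{t_{n_t}}$ at random grid times, not only for $Y(t)$. Lemma \ref{EZt} provides the bound for $\bar Y$ directly. Its proof repeats the conditional one-step argument of Theorem \ref{th2.1}, using \eqref{FG-Khas} and \eqref{FGlinear}, and the terms $|\tilde F|^2\delta_n^2/(1+|x|^2)$ are $O(\check\delta)$ there as well. So I take that lemma as given, and the rest is routine.
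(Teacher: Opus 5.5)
Your proposal is correct and is essentially the paper's argument. The paper proves Lemma \ref{lm5} by repeating the proof of Lemma \ref{baryy}, using \eqref{FGlinear} in place of \eqref{Flinear} and the moment bound for $\bar Y$ from Lemma \ref{EZt}, which is exactly what you do: $\hat\delta^{-q_2/2}\check\delta^{q_2}=\rho^{q_2/2}\check\delta^{q_2/2}$ handles the drift term, and $|\tilde G|\le|g|$ together with \eqref{g-super} and $q_2(l+2)/2\le p$ handles the diffusion term (your constant $C$ depends on $\rho=\check\delta/\hat\delta$, as in the paper).
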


In the following, we consider the convergence rate of the ATS-TaEM \eqref{ATS-TaEM}.

\begin{theorem}  \label{th-5.3}
Let \eqref{A2.1-1} and Assumption \ref{A3.1} hold with $p \ge 2(l + 2) \vee (4l+2) $, for any $q \in [2,{p^*}) \cap [2,p/(l + 2)] \cap [2,p/(2l+1)] $, the numerical solutions defined by \eqref{continuous-2} and \eqref{continuous-3} satisfy
\begin{equation*}
	\mathop {\sup }\limits_{0 \le t \le T} \mathbb{E}{\left| {X(t) - Y(t)} \right|^q} \le {C}\check{\delta}^{\frac{q}{2}},  \quad
	\mathop {\sup }\limits_{0 \le t \le T} \mathbb{E}{\left| {X(t) - \bar Y(t)} \right|^q} \le {C}\check{\delta}^{\frac{q}{2}}.
\end{equation*}
\end{theorem}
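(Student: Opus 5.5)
The plan is to follow the proof of Theorem \ref{rate1}, with one structural simplification and one new estimate. The simplification is that the stopping-time/Young splitting in \eqref{jk} should no longer be needed. In the TEM case that splitting was forced by the truncation radius $\varphi^{-1}(K\hat\delta^{-\gamma})$. Here the tamed backstop \eqref{tildefg} introduces no radius. Instead, it perturbs $f,g$ everywhere on $\{\delta(x)=\hat\delta\}$, and the size of that perturbation can be controlled directly by $\hat\delta^{1/2}$.

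\textbf{Step 1: Itô expansion of the error.} Set $e(t)=X(t)-Y(t)$, with $Y$ given by \eqref{continuous-2}. I apply Itô's formula to $|e(t)|^q$ up to $t\wedge\vartheta_{R,\hat\delta}\wedge\sigma_R$, so that the stochastic integral is a true martingale. I then split
\[
f(X)-\tilde F(\bar Y)=[f(X)-f(Y)]+[f(Y)-f(\bar Y)]+[f(\bar Y)-\tilde F(\bar Y)],
\]
and split $g$ the same way, using $(1+\varsigma)$ weights with $(q-1)(1+\varsigma)\le p^*-1$ exactly as in the proof of Theorem \ref{rate1}. Condition \eqref{A4.1_1} absorbs the first bracket into $K_1|e|^2$. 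Young's inequality then gives the analogue of \eqref{st6}, which is a bound by $C\int_0^t\mathbb{E}|e(s\wedge\cdot)|^q ds$ plus the consistency terms integrated over $[0,T]$. Since these remainder terms are integrated over all of $[0,T]$, the localization is harmless. I let $R\to\infty$ and use Fatou's lemma, together with $\mathbb P(\sigma_R\wedge\vartheta_{R,\hat\delta}\le T)\to0$ from Lemma \ref{EXt} and the TaEM analogue of Lemma \ref{lemma-stop}, to obtain the inequality for $\mathbb{E}|e(t)|^q$ itself.

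\textbf{Step 2: the continuity terms.} The terms $|f(Y)-f(\bar Y)|^q$ and $|g(Y)-g(\bar Y)|^q$ are bounded via \eqref{f} and \eqref{g} by $(1+|Y|^l+|\bar Y|^l)^q|Y-\bar Y|^q$. I handle them exactly as in \eqref{lem-con-2}: Cauchy--Schwarz, Lemma \ref{EZt} (which needs $2lq\le p$), and Lemma \ref{lm5} with $q_2=2p/(l+2)$ (which needs $2q\le 2p/(l+2)$). This gives $C\check\delta^{q/2}$.

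\textbf{Step 3: the taming error, the new ingredient.} On $\{\delta(\bar Y)=\hat\delta\}$ I have
\[
|f(x)-\tilde f(x)|=\frac{\hat\delta^{1/2}|x|^l|f(x)|}{1+\hat\delta^{1/2}|x|^l}\le \hat\delta^{1/2}|x|^l C_f(1+|x|^{l+1})\le C\hat\delta^{1/2}(1+|x|^{2l+1}),
\]
using \eqref{f-super}. Similarly, \eqref{g-super} gives $|g(x)-\tilde g(x)|\le C\hat\delta^{1/2}(1+|x|^{\frac{3l}{2}+1})$. Hence
\[
\mathbb{E}|f(\bar Y)-\tilde F(\bar Y)|^q+\mathbb{E}|g(\bar Y)-\tilde G(\bar Y)|^q\le C\hat\delta^{q/2}\bigl(1+\mathbb{E}|\bar Y|^{q(2l+1)}\bigr)\le C\check\delta^{q/2}.
\]
The last inequality uses Lemma \ref{EZt} and Lyapunov's inequality, and it is exactly where the hypothesis $q\le p/(2l+1)$ enters; together with $q\ge 2$ this is why $p\ge 4l+2$ is required. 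Gronwall's inequality then yields $\sup_{t\le T}\mathbb{E}|X(t)-Y(t)|^q\le C\check\delta^{q/2}$. The bound for $\bar Y$ follows from Lemma \ref{lm5} and the elementary inequality, as at the end of the proof of Theorem \ref{rate1}.

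\textbf{Main obstacle.} I expect the delicate point to be the localization in Step 1. I must make sure that removing the stopping time does not reintroduce a term like $\mathbb P(\theta\le T)$ that depends on $\hat\delta$. If the Fatou argument turns out to be awkward, the fallback is the splitting \eqref{jk} with weight $\hat\delta^{q/2}$ and a radius $R=\hat\delta^{-q/(2(p-q))}$. That choice makes $\hat\delta^{-q^2/(2(p-q))}R^{-p}=\hat\delta^{q/2}$ while leaving Step 3 unchanged, since the taming error needs no radius condition.
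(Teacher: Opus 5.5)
Your proposal is correct and follows essentially the same route as the paper's proof. Both apply It\^o's formula to $|X-Y|^q$ with the three-way splitting and the $(1+\varsigma)$-weights, bound the taming error by $|f(\bar Y)-\tilde F(\bar Y)|\le C\hat\delta^{1/2}(1+|\bar Y|^{2l+1})$ and control it through $q\le p/(2l+1)$ and Lemma \ref{EZt}, handle the continuity terms as in \eqref{lem-con-2}, and close with Gronwall, without the radius-dependent splitting of \eqref{jk}. The only difference is that you localize explicitly at $\sigma_R\wedge\vartheta_{R,\hat\delta}$ and then remove the localization with Fatou; this is cleanest if you apply Gronwall first, since its constants do not depend on $R$, and it only needs $\sigma_R\wedge\vartheta_{R,\hat\delta}\to\infty$ a.s.\ by path continuity. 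This justifies the martingale step that the paper passes over when it writes \eqref{lm10} directly.
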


\begin{proof}
Fix $T > 0$. Similar to \eqref{jk1}-\eqref{st6}, applying It\^o's formula to \eqref{continuous-2}, Assumption \ref{A3.1} and Young's inequality, for any $t \in [0, T]$, one gets
\begin{equation}  \label{lm10}
	\begin{split}
		&\mathbb{E}{\left| {X(t) - Y(t)} \right|^q} \\
		\le& C\mathbb{E}\left( {\int_0^{t} {{{\left| {X(s) - Y(s)} \right|}^q}ds} } \right) + C\mathbb{E}\left( {\int_0^t {{{\left| {f(Y(s)) - f(\bar Y(s))} \right|}^q}ds} } \right) \\
		&+ C\mathbb{E}\left( {\int_0^{t} {{{\left| {f(\bar Y(s)) - \tilde F(\bar Y(s))} \right|}^q}ds} } \right) + C\mathbb{E}\left( {\int_0^t {{{\left| {g(Y(s)) - g(\bar Y(s))} \right|}^q}ds} } \right) \\
		& + C\mathbb{E}\left( {\int_0^{t} {{{\left| {g(\bar Y(s)) - \tilde G(\bar Y(s))} \right|}^q}ds} } \right) .
	\end{split}
\end{equation}
Using \eqref{FG1}, \eqref{tildefg} and Assumption \ref{A3.1} we compute
\begin{equation*}
\begin{split}
	&\left| {f(\bar Y(s)) - \tilde F(\bar Y(s))} \right| = \left| {f(\bar Y(s)) - \tilde f(\bar Y(s))} \right|{\mathcal{I}_{\{ {\delta _{{n_s}}} = \hat \delta \} }} \\
	\le& \left| {f(\bar Y(s))} \right|\frac{{{{\hat \delta }^{1/2}}{{\left| {\bar Y(s)} \right|}^l}}}{{1 + {{\hat \delta }^{1/2}}{{\left| {\bar Y(s)} \right|}^l}}} 
	\le {C_f}{{\hat \delta }^{\frac{1}{2}}}{\left| {\bar Y(s)} \right|^l}\left( {1 + {{\left| {\bar Y(s)} \right|}^{l + 1}}} \right) \\
	\le& C{{\hat \delta }^{\frac{1}{2}}}\left( {1 + {{\left| {\bar Y(s)} \right|}^{2l + 1}}} \right) 
\end{split}
\end{equation*}
Then, using the facts $0< \hat{\delta} \le \check{\delta}$ and $q \in [2, p/(2l+1)]$ yields
\begin{equation*}  
\begin{split}
	&\mathbb{E}\left( {\int_0^t {{{\left| {f(\bar Y(s)) - \tilde F(\bar Y(s))} \right|}^q}ds} } \right) \le C{{\hat \delta }^{\frac{q}{2}}}\int_0^T {\mathbb{E}{{\left( {1 + {{\left| {\bar Y(t)} \right|}^{2l + 1}}} \right)}^q}dt}  \\
	\le& C{{\check \delta }^{\frac{q}{2}}}\int_0^T {\left( {1 + \mathbb{E}{{\left| {\bar Y(t)} \right|}^{q(2l + 1)}}} \right)dt}  \\
	\le& C{{\check \delta }^{\frac{q}{2}}}\int_0^T {\left( {1 + {{\left( {\mathbb{E}{{\left| {\bar Y(t)} \right|}^p}} \right)}^{\frac{{q(2l + 1)}}{p}}}} \right)dt} 	\le  C{{\check \delta }^{\frac{q}{2}}} .
\end{split}
\end{equation*}
Similarly, one has
\begin{equation*}  
	\mathbb{E}\left( {\int_0^t {{{\left| {g(\bar Y(s)) - \tilde G(\bar Y(s))} \right|}^q}ds} } \right) \le C{{\check \delta }^{\frac{q}{2}}}.
\end{equation*}
Inserting the above into \eqref{lm10}, using Assumption \ref{A3.1} yields
\begin{equation*}
	\begin{split}
		&\mathbb{E}{\left| {X(t) - Y(t)} \right|^q} \\
		\le& C\mathbb{E}\left( {\int_0^t {{{\left| {X(s) - Y(s)} \right|}^q}ds} } \right) + C\mathbb{E}\left[ {\int_0^t {{{\left( {1 + {{\left| {Y(s)} \right|}^l} + {{\left| {\bar Y(s)} \right|}^l}} \right)}^q}{{\left| {Y(s) - \bar Y(s)} \right|}^q}ds} } \right] \\
		&+ C\mathbb{E}\left( {\int_0^t {{{\left( {1 + {{\left| {Y(s)} \right|}^l} + {{\left| {\bar Y(s)} \right|}^l}} \right)}^{\frac{q}{2}}}{{\left| {Y(s) - \bar Y(s)} \right|}^q}ds} } \right)  + C\check{\delta}^{\frac{q}{2}} \\
		\le& C\mathbb{E}\left( {\int_0^t {{{\left| {X(s) - Y(s)} \right|}^q}ds} } \right)  + C\int_0^t {\mathbb{E}\left[ {{{\left( {1 + {{\left| {Y(s)} \right|}^l} + {{\left| {\bar Y(s)} \right|}^l}} \right)}^q}{{\left| {Y(s) - \bar Y(s)} \right|}^q}} \right]ds} 
		+ C\check{\delta}^{\frac{q}{2}} .
	\end{split}
\end{equation*}
Similar to \eqref{lem-con-2}, since $q \in [2,p/(l + 2)] \cap [2,p/(2l+1)]$ we have
\begin{equation*}
\begin{split}
	{\mathbb{E}\left[ {{{\left( {1 + {{\left| {Y(s)} \right|}^l} + {{\left| {\bar Y(s)} \right|}^l}} \right)}^q}{{\left| {Y(s) - \bar Y(s)} \right|}^q}} \right]} \le C\check{\delta}^{\frac{q}{2}} .
\end{split}
\end{equation*}
Hence, one gets
\begin{equation*}
	\begin{split}
		\mathbb{E}{\left| {X(t) - Y(t)} \right|^q} \le C\int_0^t {\mathbb{E}{{\left| {X(s) - Y(s )} \right|}^q}ds}   + C\check{\delta}^{\frac{q}{2}}.
	\end{split}
\end{equation*}
The Gronwall inequality gives
\begin{equation*}
	\mathbb{E}{\left| {X(t) - Y(t)} \right|^q} \le {C}\check{\delta}^{\frac{q}{2}}, \quad \forall t \in [0, T].
\end{equation*}
Combining with Lemma \ref{lm5} we then have
\begin{equation*}
	\mathop {\sup }\limits_{0 \le t \le T} \mathbb{E}{\left| {X(t) - \bar Y(t)} \right|^q} \le {2^{q - 1}}\mathop {\sup }\limits_{0 \le t \le T} \mathbb{E}{\left| {X(t) - Y(t)} \right|^q} + {2^{q - 1}}\mathop {\sup }\limits_{0 \le t \le T} \mathbb{E}{\left| {Y(t) - \bar Y(t)} \right|^q} \le C{\check{\delta} ^{\frac{q}{2}}} .
\end{equation*}
The proof is therefore complete.
\end{proof}

Consequently, the TaEM proposed in \cite[p.2087, Model 2]{sabanis2016euler} can be employed as the backstop scheme, in which case the corresponding ATS-TaEM attains the optimal  $1/2$-order strong convergence rate. Moreover, Model 1 in \cite{sabanis2016euler} can also be adopted as a backstop scheme. In this case, the modified drift and diffusion coefficients are given by
\begin{equation} \label{model1-fg}
	\tilde f(x) = \frac{{f(x)}}{{1 + {{\hat \delta }^{1/2}}\left| {f(x)} \right| + {{\hat \delta }^{1/2}}{{\left| {g(x)} \right|}^2}}}, \quad \tilde g(x) = \frac{{g(x)}}{{1 + {{\hat \delta }^{1/2}}\left| {f(x)} \right| + {{\hat \delta }^{1/2}}{{\left| {g(x)} \right|}^2}}}, \quad \forall x \in \mathbb{R}^d.
\end{equation}
It is straightforward to verify that the conditions \eqref{flinear}, \eqref{glinear}, and \eqref{fg-Khas} remain valid for the coefficients in \eqref{model1-fg}. The $1/2$-order strong convergence rate can be obtained using similar techniques to those in Theorem \ref{th-5.3} and is therefore omitted to avoid duplication.

\section{Numerical experiments}  \label{sec6}
In the numerical experiments below, we compare the ATS-TEM scheme \eqref{scheme}-\eqref{eq-tem} to a number of different fixed-step numerical schemes. For the latter, we take as the uniform stepsize $\delta_{mean}$ the average of all time steps $\delta_n^{(m)}$ over each path and each realization $m = 1,..., M$ so that
\begin{equation}
	{\delta_{mean}} = \frac{1}{M}\sum\limits_{m = 1}^M {\frac{1}{{{N^{(m)}}}}\sum\limits_{n = 1}^{{N^{(m)}}} {\delta _n^{(m)}} }. 
\end{equation}
This implies that the ATS-TEM and the fixed-step numerical schemes require the same number of timesteps to reach the terminal time $T$. Consequently, their computational costs are comparable, which allows for a fair comparison of their numerical performance \cite{kelly2018adaptive}.
In the following, errors are measured in terms of the root mean square error (RMSE) at the terminal time $T = 1$. The expectations are approximated by computing averages over $M = 500$ sample paths. The numerical experiments are tested by Matlab R2023a on DESKTOP-QRSSSIC (11VECTO1WW).

In the following, we first introduce three widely used fixed-step numerical schemes with uniform stepsize $\delta_{mean}$.

\begin{itemize}
\item[(1)] \textbf{Truncated EM scheme} (see \cite{li2019explicit})
    The fixed step truncated EM scheme (\textbf{FS-TEM}) is described by 
\begin{equation}   \label{TEMa}
	\begin{split}
		\left\{\begin{array}{l}
			{{Y_0} = {x_0},} \\
			{{{\tilde Y}_{{t_{n + 1}}}} = {Y_{{t_n}}} + f({Y_{{t_n}}}){\delta _{mean}} + g({Y_{{t_n}}})(W((n + 1){\delta _{mean}}) - W(n{\delta _{mean}}))}, \\
			{{Y_{{t_{ n+1}}}} = \left( {\left| {{{\tilde Y}_{{t_{n + 1}}}}} \right| \wedge {\varphi ^{ - 1}}(K\delta _{mean}^{ - \gamma })} \right)\frac{{{{\tilde Y}_{{t_{n + 1}}}}}}{{\left| {{{\tilde Y}_{{t_{n + 1}}}}} \right|}}} 
		\end{array}\right.
	\end{split}
\end{equation}
with $n = 0,1,...$, where $\varphi :{\mathbb{R}_ + } \to {\mathbb{R}_ + }$ is a strictly increasing continuous function defined in \eqref{phi}, ${\varphi ^{ - 1}}$ represents the inverse function of $\varphi$, which is ${\varphi ^{ - 1}}:[\varphi (0), + \infty ) \to {\mathbb{R}_ + }$ is a strictly increasing continuous function, $K \ge \left| {f(0)} \right| \vee {\left| {g(0)} \right|^2} \vee \varphi (1)$ is a positive constant independent of $\delta_{mean}$, and $\gamma \in (0, 1/2]$.

\item[(2)] \textbf{Tamed EM scheme} (see \cite{sabanis2016euler}) The fixed-step tamed EM scheme (\textbf{FS-TaEM}) is described by 
\begin{equation}   \label{TaEM}
	\begin{split}
		\left\{\begin{array}{l}
			{{Y_0} = {x_0},} \\
			{Y_{{t_{n + 1}}}} = {Y_{{t_n}}} + \frac{{f({Y_{{t_n}}}){\delta _{mean}} + g({Y_{{t_n}}})(W((n + 1){\delta _{mean}}) - W(n{\delta _{mean}}))}}{{1 + {{({\delta _{mean}})}^{\gamma } }|f({Y_{{t_n}}})| + {{({\delta _{mean}})}^{\gamma} }|g({Y_{{t_n}}}){|^2}}} 
		\end{array}\right.
	\end{split}
\end{equation}
with $n = 0,1,...$, where both drift and diffusion coefficients $f, g$ may obey  the super-linear growth condition. The strong convergence of order $1/2$ is achieved by setting $\gamma = 1/2$.
\item[(3)] \textbf{Backward EM scheme} (see \cite{kloeden1992numerical}) The fixed-step backward EM scheme (\textbf{FS-BEM}) is given   by
\begin{equation}   \label{BEM}
	\begin{split}
		\left\{\begin{array}{l}
			Y_0 = x_0, \\
			{Y_{{t_{n + 1}}}} = {Y_{{t_n}}} + f({Y_{{t_{n + 1}}}}){\delta _{mean}} + g({Y_{{t_n}}})(W((n + 1){\delta _{mean}}) - W(n{\delta _{mean}})),
		\end{array}\right.
	\end{split}
\end{equation}
with $n= 0,1,...$, where $f(\cdot)$ satisfies the super-linear growth and $g(\cdot)$ satisfies the linear growth condition. The rate of strong convergence is proved to be $1/2$. 

\end{itemize}

\begin{example}{\rm
	Consider the stochastic Ginzburg--Landau equation, which describes a phase transition from the theory of superconductivity (see e.g., \cite{kloeden1992numerical, hutzenthaler2015numerical})
	\begin{equation}   \label{ex1}
		dX(t) = \left[ {\left( {{a_1} + \frac{1}{2}a_2^2} \right)X(t) - {X^3}(t)} \right]dt + {a_2}X(t)dW(t), \quad  X(0)= x_0 = 1.
	\end{equation}
	It admits the following explicit solution:
	\begin{equation} \label{ex1_1}
		\begin{split}
			X(t) = \frac{{{x_0}\exp ({a_1}t + {a_2}W(t))}}{{\sqrt {1 + 2x_0^2\int_0^t {\exp (2{a_1}s + 2{a_2}W(s))ds} } }}, \quad t \ge 0.
		\end{split}
	\end{equation}
	Set $ a_1 = -3/2$ and $a_2 = 1$. It can be verified that Assumptions \ref{A2.1} and \ref{A3.1} hold with $p,p^* > 2$ and $l = 2$. Then by virtue of Theorem \ref{rate1} we see, ATS-TEM admits a $1/2$-order of convergence rate.

	Next, we use the ATS-TEM \eqref{scheme}-\eqref{eq-tem} to carry out the numerical experiments. Set $\hat{\delta} = 2^{-20}$ and $\check{\delta} = 2^{-12}$.
	Figure \ref{fig:convergence} plots the convergence of ATS-TEM with different maximum stepsizes. It is evident from Figure \ref{fig:convergence} that ATS-TEM has a $1/2$-order of convergence rate, which is consistent with our theoretical result.
	Then, we compare the performance of ATS-TEM with FS-TEM, FS-TaEM, FS-BEM schemes that use the uniform timestep $\delta_{mean}$. 
	Figure \ref{2ex_4} shows the comparisons of RMSE against the number of timesteps and CPU time. For a given RMSE=0.005, Figure \ref{2ex_4} (Left) presents that ATS-TEM costs slightly fewer numbers of timesteps than FS-TEM, FS-TaEM and FS-BEM. And we observe from Figure \ref{2ex_4} (Right) that ATS-TEM costs less CPU time than FS-TEM and FS-TaEM, and substantially less CPU time than FS-BEM for a given RMSE= 0.005.
	Figure \ref{1_compare} (Left) plots the sample paths of the ATS-TEM and the exact solution \eqref{ex1_1} for SDE \eqref{ex1}. One observes that ATS-TEM effectively approximates the exact solution. With the uniform timestep $\delta_{mean} = 0.3704$, Figure \ref{1_compare} (Right) shows that the fixed-step EM scheme may lead to numerical explosion. These show that for SDE \eqref{ex1}, the ATS-TEM performs slightly better than fixed-step schemes that use the uniform timestep.
	
	\begin{figure}[H]
		\centering
		\includegraphics[width=0.65\textwidth]{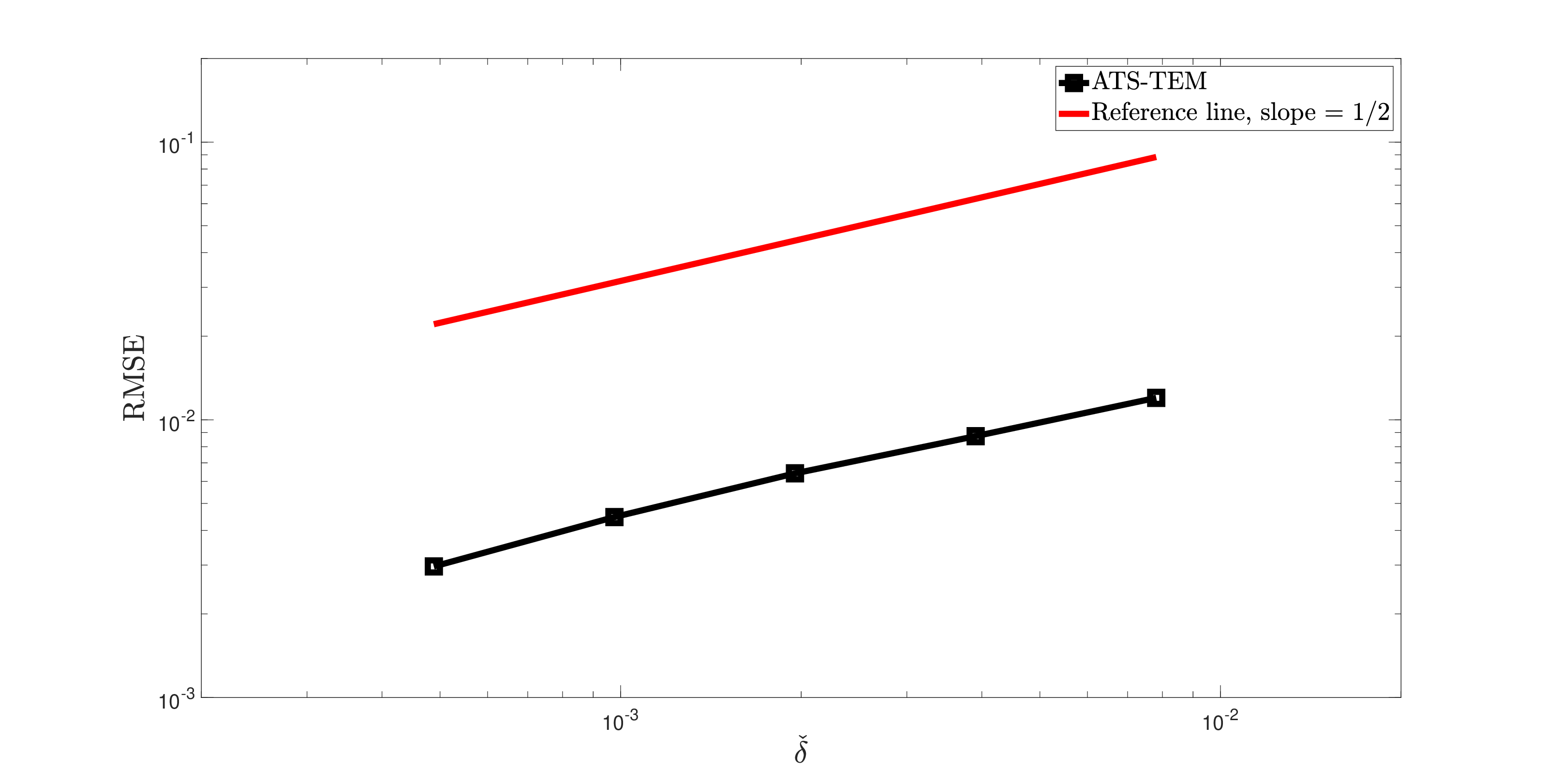}
		\caption{The strong convergence rate of ATS-TEM for SDE \eqref{ex1}.}
		\label{fig:convergence}
	\end{figure}
	
	\begin{figure}[htbp]
		\centering
		\begin{subfigure}{0.48\textwidth}
			\centering
			\includegraphics[width=\textwidth]{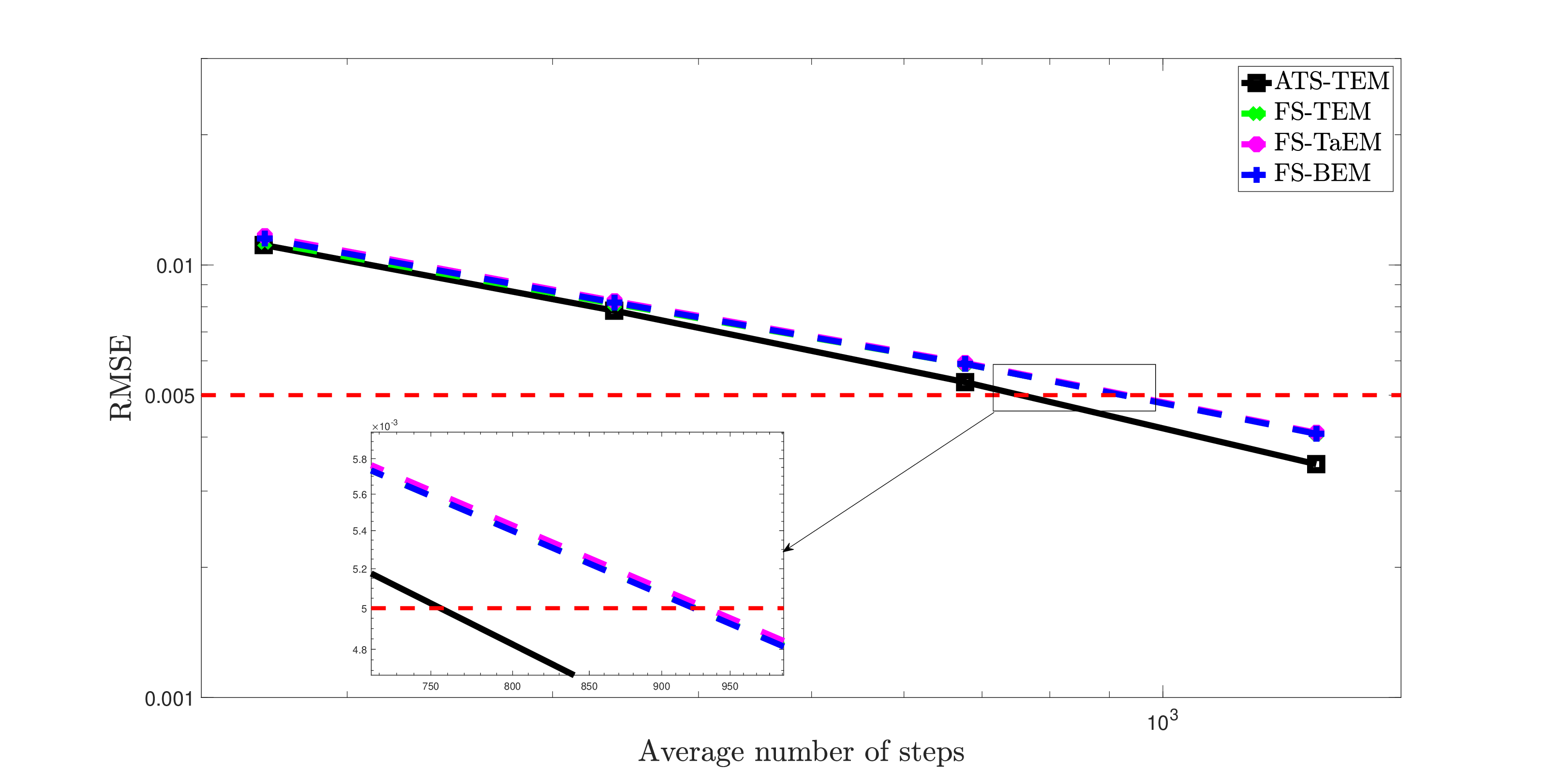}
		\end{subfigure}
		\hspace{0.01\textwidth}
		\begin{subfigure}{0.48\textwidth}
			\centering
			\includegraphics[width=\textwidth]{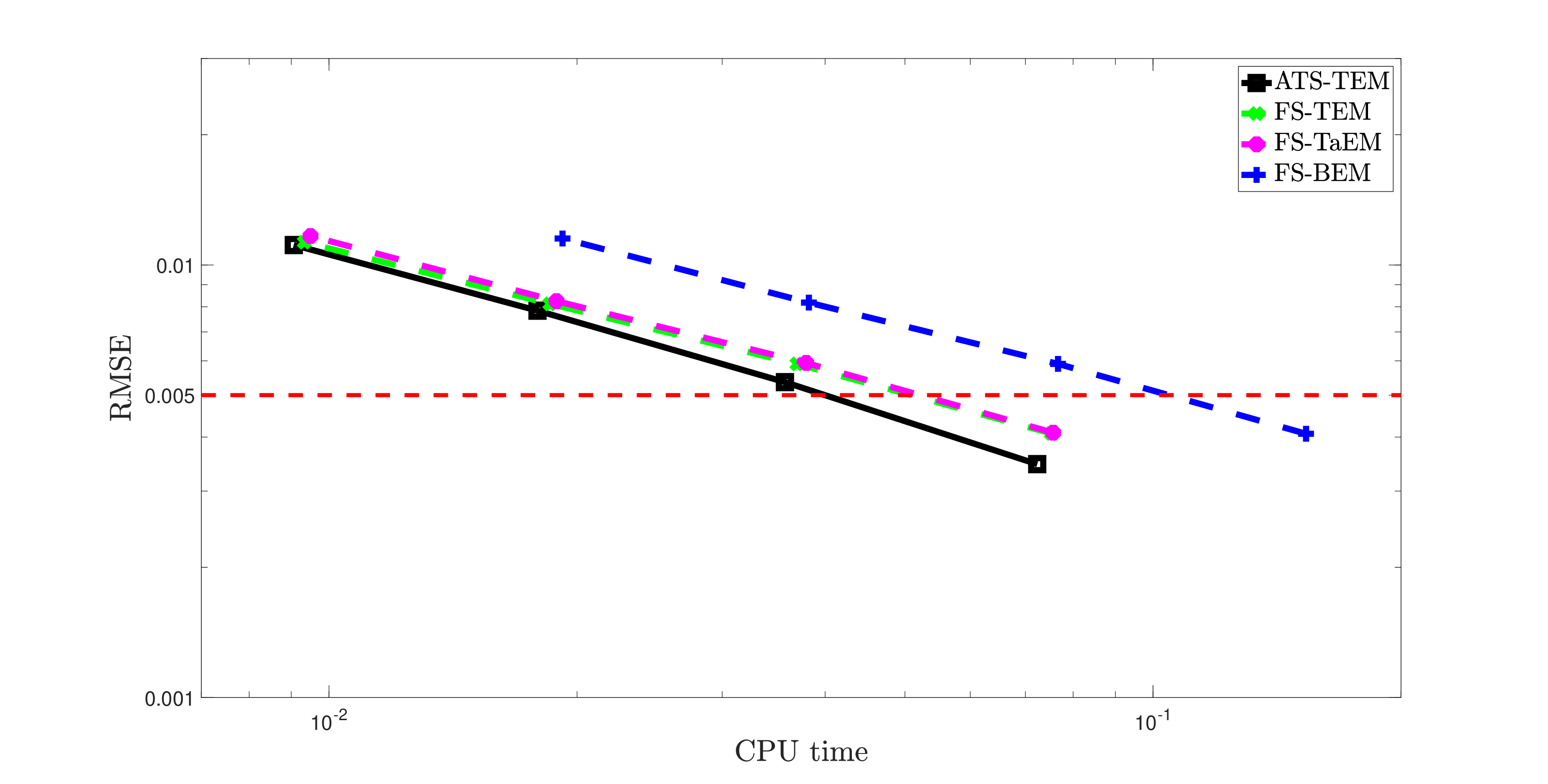}
		\end{subfigure}
		\caption{RMSE of numerical schemes for SDE \eqref{ex1}.}
		\label{2ex_4}
	\end{figure}
	
	\begin{figure}[H]
		\centering
		\begin{subfigure}{0.48\textwidth}
			\centering
			\includegraphics[width=\textwidth]{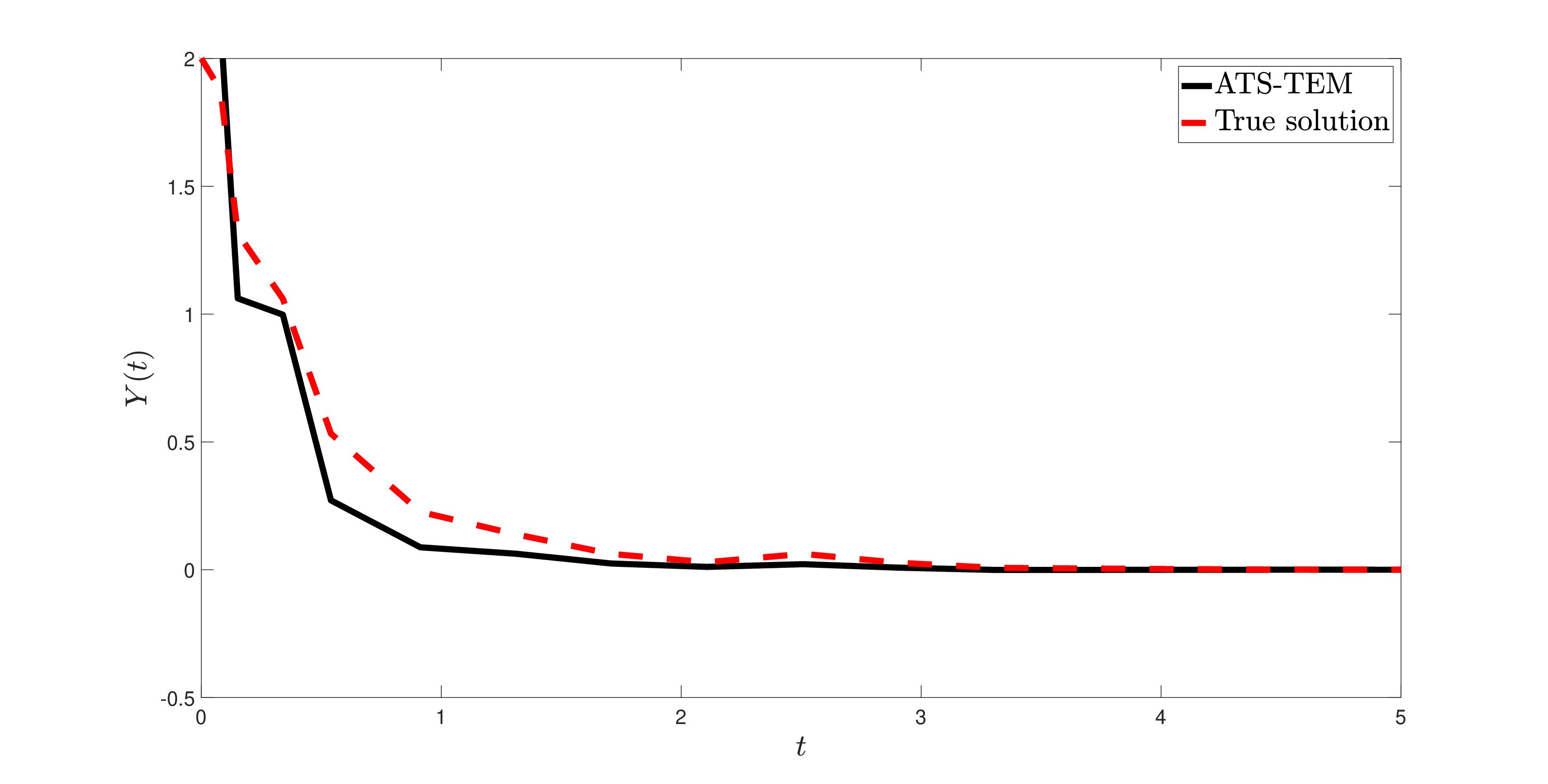}
		\end{subfigure}
		\hspace{0.01\textwidth}
		\begin{subfigure}{0.48\textwidth}
			\centering
			\includegraphics[width=\textwidth]{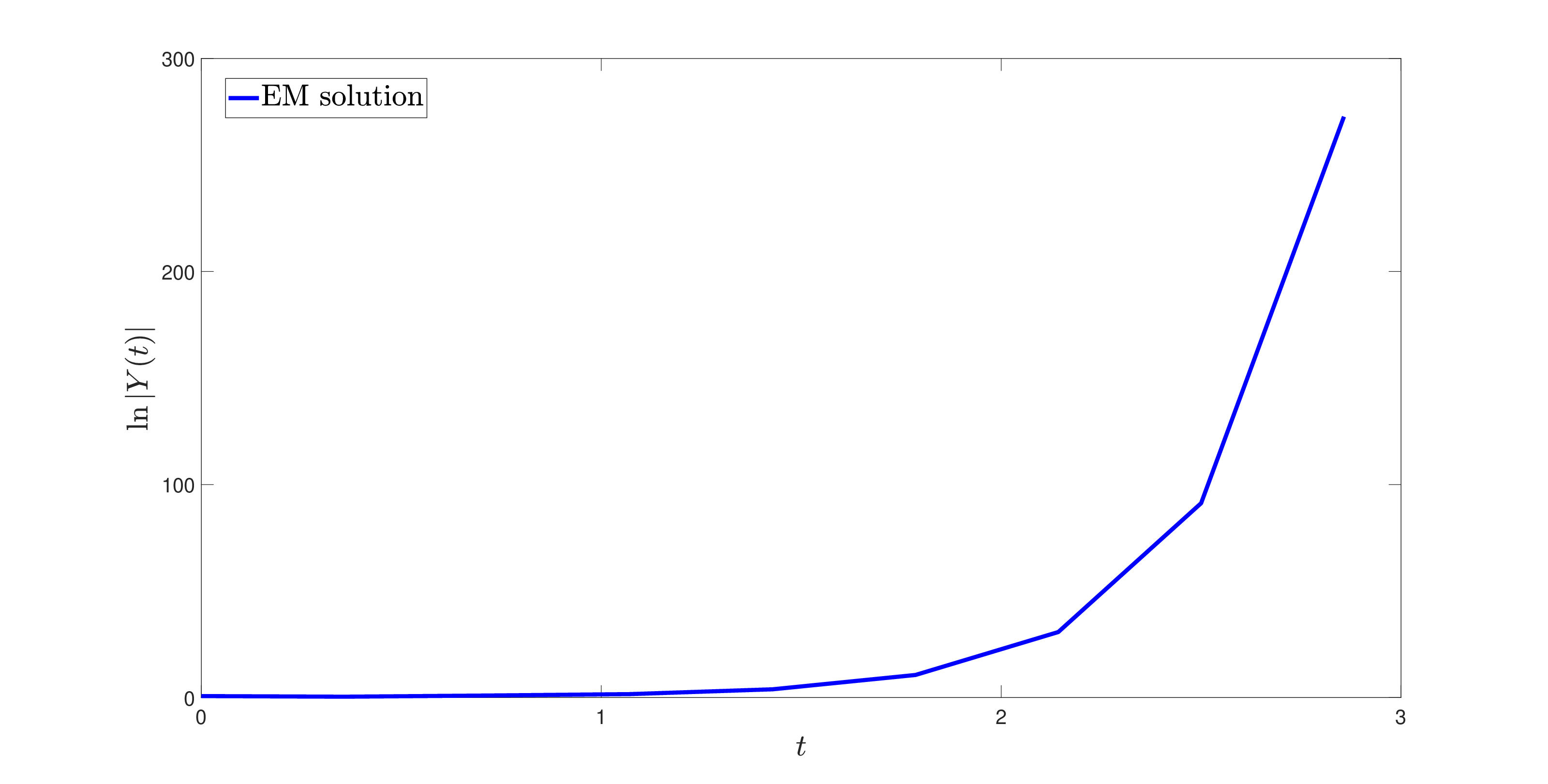}
		\end{subfigure}
		\caption{(Left) Sample paths of ATS-TEM solution and exact solution \eqref{ex1_1} for SDE \eqref{ex1};
			(Right) Sample path of the fixed-step EM solution $\ln|Y(t)|$ with the uniform timestep $\delta_{mean}$.}
		\label{1_compare}
	\end{figure}
	}
\end{example}

\begin{example} 
 {\rm  Consider a scalar stiff SDE with a double-well drift potential (see. e.g., \cite{rackauckas2020stability}) 
\begin{equation} \label{ex0}
	dX(t) =  - V'(X(t))dt + 10X(t)dW(t), \quad X(0) = X_0 = 3,
\end{equation}
where 
\begin{equation*}
	V(x) = \frac{{{x^4}}}{4} - \frac{{26}}{3}{x^3} + \frac{{125}}{2}{x^2} - 100x, \quad -V'(x) = (x - 1)(5 - x)(x - 20).
\end{equation*}
A direct computation implies that Assumptions \ref{A2.1} and \ref{A3.1} hold with $p, p^* >2$ and $l = 2$. It follows from Theorem \ref{th-5.3} that ATS-TEM \eqref{scheme}-\eqref{eq-tem} has a $1/2$-order of convergence rate.

Next, we are going to compare the performance of ATS-TEM with schemes that use a uniform timestep $\delta_{mean}$. 
Set $\hat{\delta} = 2^{-24}$ and $\check{\delta} = 2^{-12}$. 
Since the exact solution of SDE \eqref{ex0} cannot be given explicitly, we take the solution generated by the same method with a timestep that is four times smaller as the reference solution. 
Figure \ref{fig:d1} displays a sample path generated by the ATS-TEM for SDE \eqref{ex0}, together with the corresponding adaptive timesteps at each step.
One observes that the timestep is reduced in regions where the path exhibits rapid oscillations, while larger timesteps are used in relatively smooth regions.
Figure \ref{fig:d5_23} presents the comparisons of ATS-TEM and FS-TaEM schemes, where the left one is the RMSE against the average number of timesteps, and the right one is about the RMSE against the CPU time. 
For a given RMSE=0.2, one observes from Figure \ref{fig:d5_23} (Left) that the ATS-TEM costs fewer numbers of timesteps than FS-TaEM with the uniform timestep, and in Figure \ref{fig:d5_23} (Right), the ATS-TEM costs less CPU time than FS-TaEM.

\begin{figure}[h]
	\centering
	\includegraphics[width=0.65\textwidth]{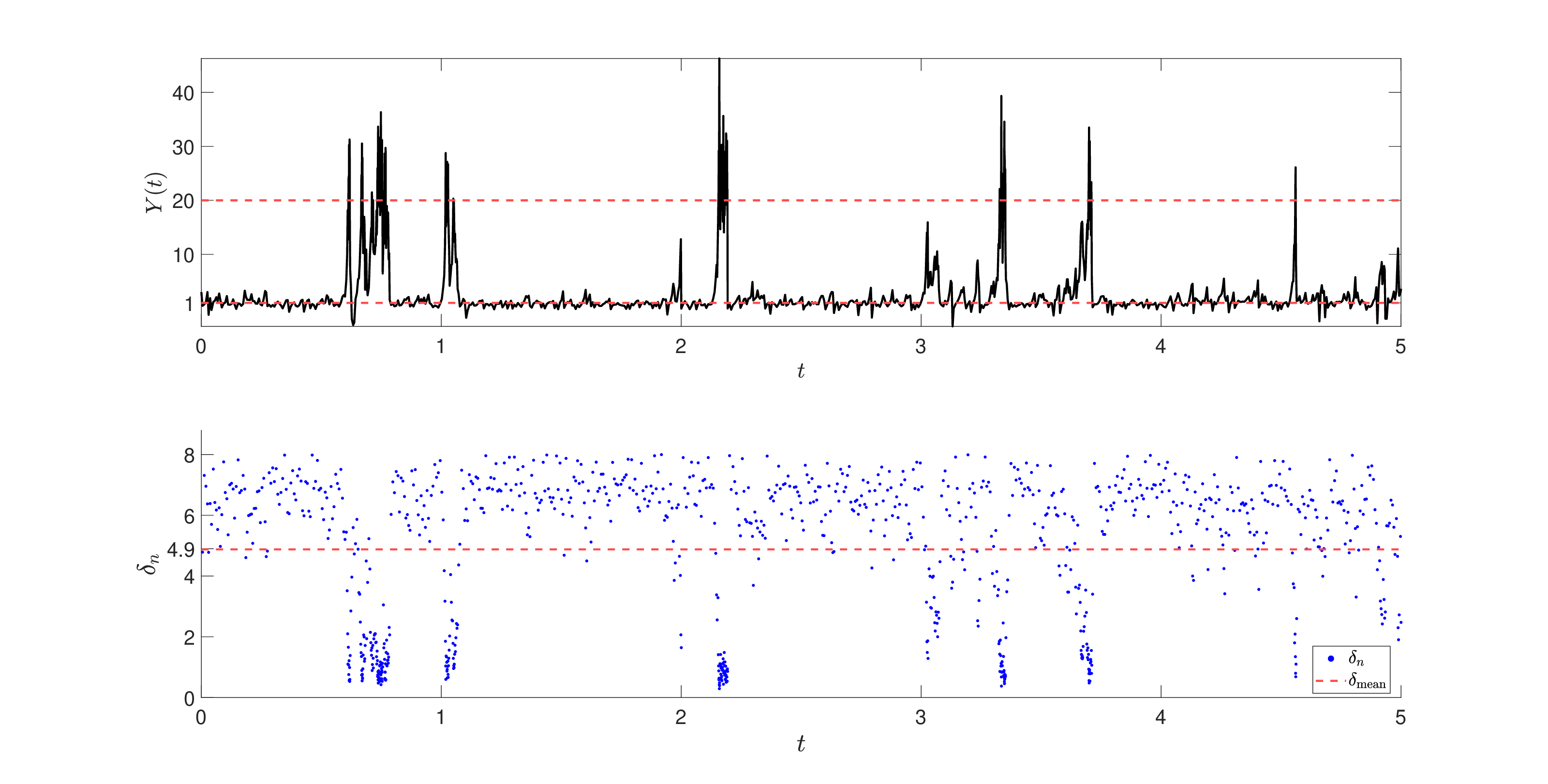}
	\caption{One sample path generated by ATS-TEM and the corresponding adaptive timesteps for SDE \eqref{ex0}.}
	\label{fig:d1}
\end{figure}

\begin{figure}[h]
	\centering
	\begin{subfigure}{0.48\textwidth}
		\centering
		\includegraphics[width=\textwidth]{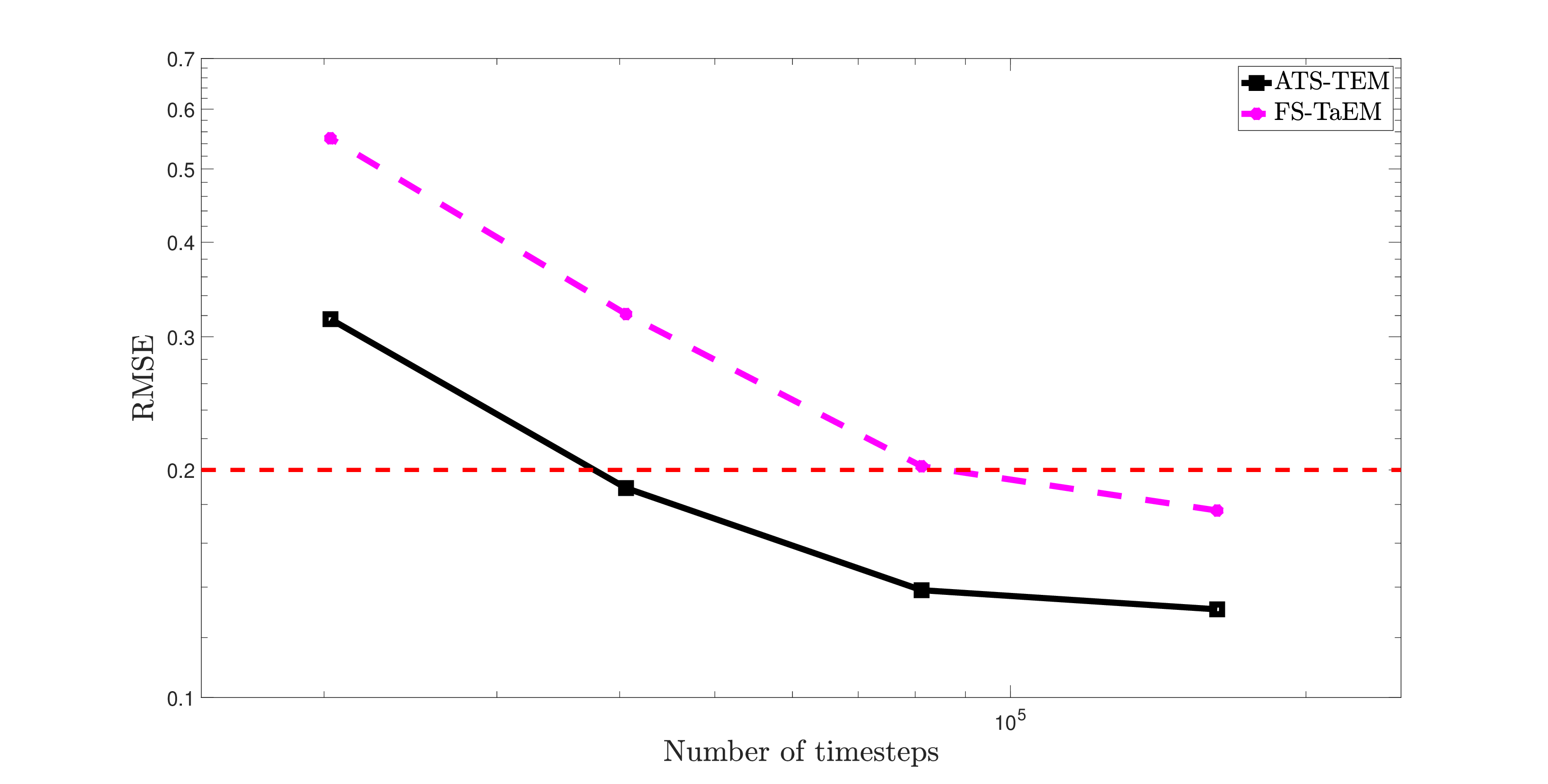}
	\end{subfigure}
	\hspace{0.01\textwidth}
	\begin{subfigure}{0.48\textwidth}
		\centering
		\includegraphics[width=\textwidth]{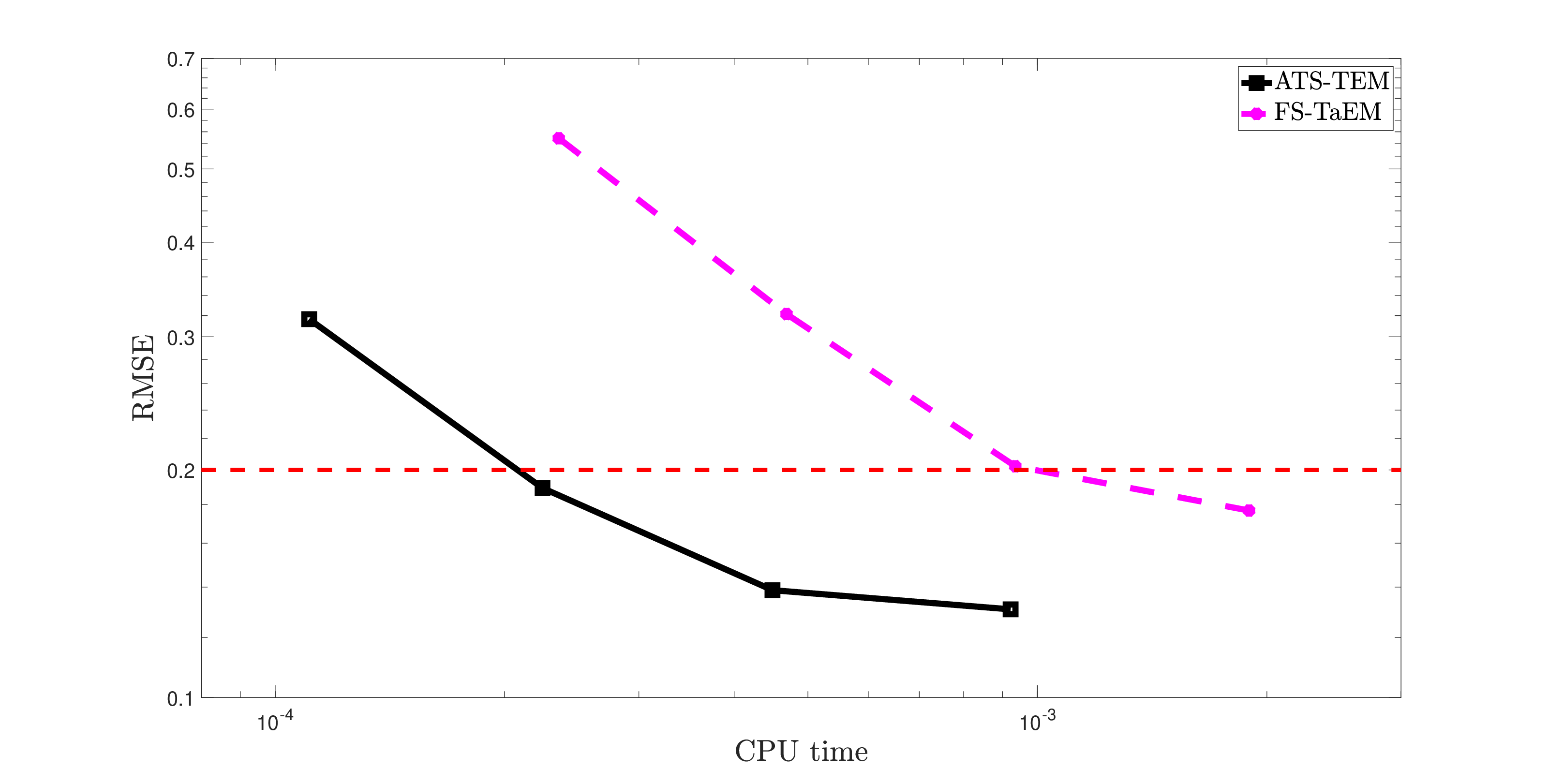}
	\end{subfigure}
	\caption{RMSE of ATS-TEM and FS-TaEM for SDE \eqref{ex0}.} 
	\label{fig:d5_23}
\end{figure}

We further compare ATS-TEM with the adaptive EM scheme (denoted by AEM below) proposed by Fang and Giles \cite{fang2020adaptive}, 
\begin{equation}   \label{AEM}
	\begin{split}
		\left\{\begin{array}{l}
			Z_0 = x_0, \quad t_0 = 0, \\
			\delta _n^h:= \frac{{\max (1,|Z_{t_n}|)}}{{\max (1,|f(Z_{t_n})|)}}h, \quad t_{n+1} = t_n + \delta_n^h, \quad n = 0,1,..., \\
			{Z_{{t_{n + 1}}}} = {Z_{{t_n}}} + f({Z_{{t_{n }}}})\delta _n^h + g({Z_{{t_n}}})(W({t_{n + 1}}) - W({t_n})), \quad n = 0,1,...
		\end{array}\right.
	\end{split}
\end{equation}
with $0 \le h \le 1$.
Figure \ref{fig:d6_12} shows the comparisons of ATS-TEM and AEM schemes for RMSE against the number of timesteps and CPU time. For a given RMSE=0.2, it can be observed that in Figure \ref{fig:d6_12} (Left), the ATS-TEM costs fewer number of timesteps than AEM, and that in Figure \ref{fig:d6_12} (Right), the ATS-TEM costs less CPU time than AEM. 
These show the better performance of ATS-TEM for the stiff SDE \eqref{ex0}.

\begin{figure}[H]  
	\centering
	\begin{subfigure}{0.48\textwidth}
		\centering
		\includegraphics[width=\textwidth]{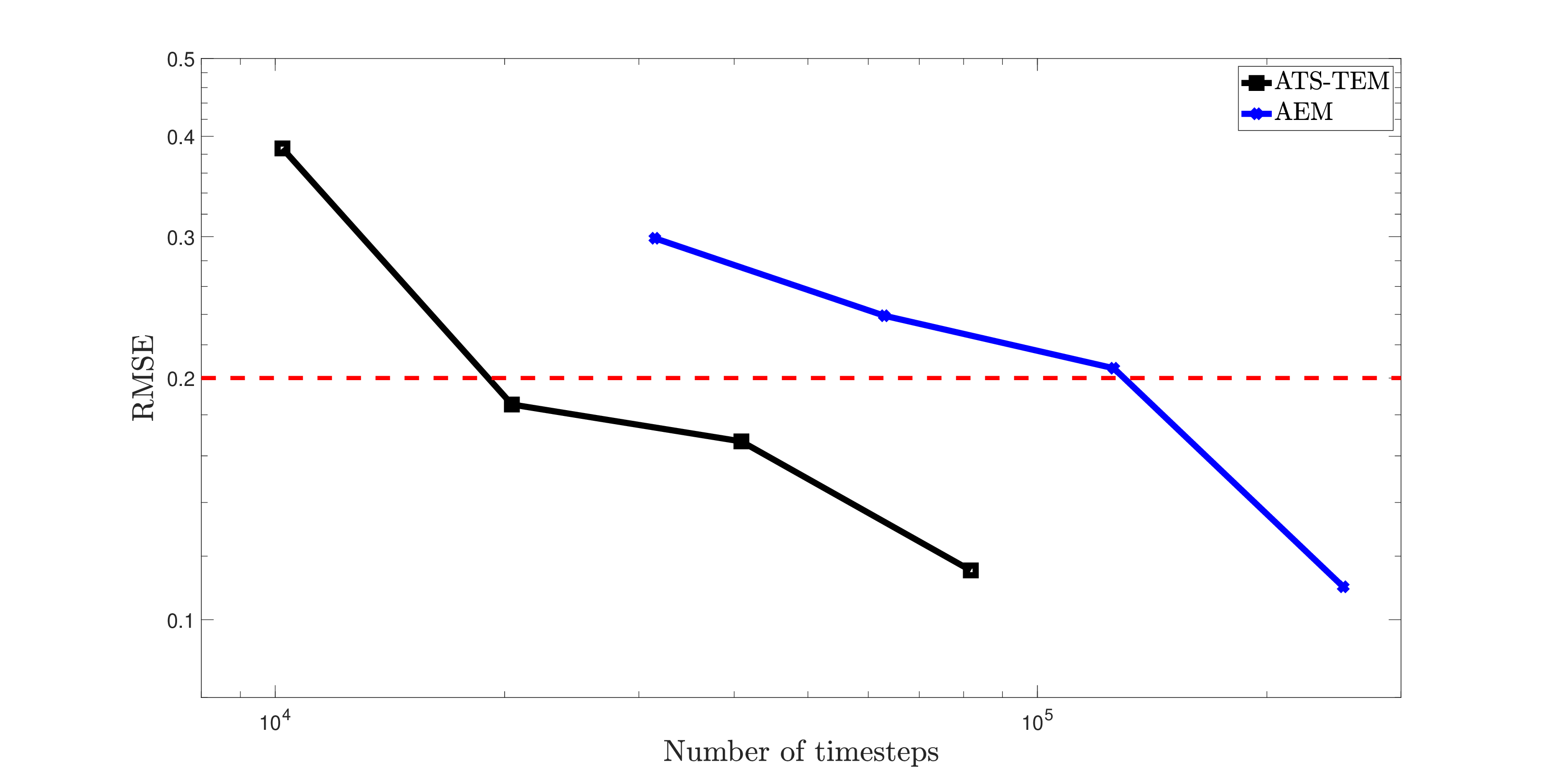}
	\end{subfigure}
	\hspace{0.01\textwidth}
	\begin{subfigure}{0.48\textwidth}
		\centering
		\includegraphics[width=\textwidth]{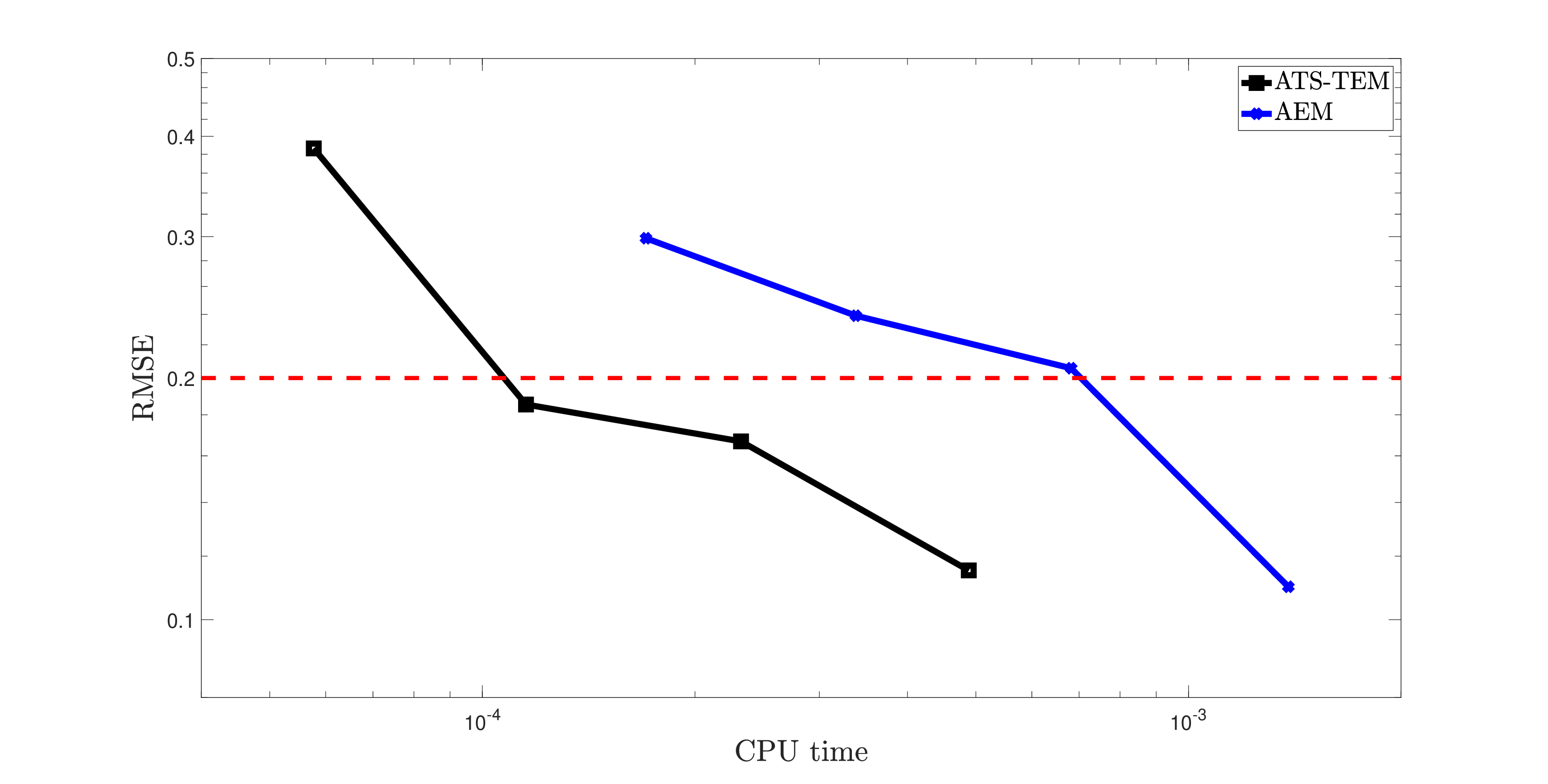}
	\end{subfigure}
	\caption{RMSE of ATS-TEM and AEM for SDE \eqref{ex0}.} 
	\label{fig:d6_12}
\end{figure}
}
\end{example}

\begin{example}  \label{ex3}{\rm 
The Heston 3/2 model is a stochastic volatility model widely used in mathematical finance. In this model, the variance process is described by 
\begin{equation}  \label{heston}
	dX(t) = \kappa X(t)(\theta  - X(t))dt + \sigma {X^{\frac{3}{2}}}(t)dW(t), \quad X(0)=x_0 \in \mathbb{R}_+,
\end{equation}
where $\kappa >0$ represents the mean-reversion speed, determining how quickly the volatility returns to its long-term equilibrium; $\theta  >0$ denotes the long-term average volatility level; and $\sigma  >0$ is the volatility of volatility, which quantifies the magnitude of random fluctuations in the volatility process itself (see e.g., \cite{lewis2016option, wu2025an}). Let $(\kappa, \theta, \sigma, x_0) = (2, 0.04, 0.5, 1)$, one can verify that Assumption \ref{A2.1} holds. It follows from Theorem \ref{strong-con2} that ATS-TEM \eqref{scheme}-\eqref{eq-tem} converges strongly to the solution of SDE \eqref{heston} in $L^q$ sense for $q > 2$.

Next, we use the ATS-TEM \eqref{scheme}-\eqref{eq-tem} to carry out the numerical experiments.
Set $\hat{\delta} = 2^{-18}$.
Since the exact solution of SDE \eqref{heston} cannot be given explicitly, we take the ATS-TEM solution with a sufficiently small maximal timestep $\check{\delta} = 2^{-12}$ as the reference solution.
The $L^4$ strong error of ATS-TEM for SDE \eqref{heston} with different maximum timesteps is listed in Table \ref{table1}, it is evident that the $L^4$ strong error decreases as the maximal timestep $\check{\delta}$ decreases.

\begin{table}[htbp] 
	\centering
	\caption{$L^4$ strong error of the ATS-TEM for SDE \eqref{heston} with different maximum stepsizes}  
	\label{table1}
	\begin{tabularx}{0.8\textwidth}{>{\centering\arraybackslash}X>{\centering\arraybackslash}X}  
		\toprule  % 使用booktabs的顶部线
		$\check{\delta}$ & $\sqrt[4]{{\mathbb{E}{{\left| {X(T) - Y(T)} \right|}^4}}}$ \\
		\midrule  % 使用booktabs的中部线
		$2^{-6}$  & $3.6541\times 10^{-3}$ \\
		$2^{-7}$  & $2.2019\times 10^{-3}$ \\
		$2^{-8}$  & $1.5744\times 10^{-3}$ \\
		$2^{-9}$  & $1.1021\times 10^{-3}$ \\
		$2^{-10}$ & $7.2190\times 10^{-4}$ \\
		\bottomrule  % 使用booktabs的底部线
	\end{tabularx}
\end{table}}
\end{example}

\begin{example}{\rm 
The stochastic Lorenz system is a simplified model for atmospheric convection rolls and is well known for its chaotic dynamics,
\begin{equation}   \label{lorenz}  
	\begin{split}
		\left\{\begin{array}{l}
			{d{X_1}(t) ={\alpha _1} \left( {{X_2}(t) - {X_1}(t)} \right)dt + {\beta _1}{X_2}(t)d{W_1}(t)}, \\
			{d{X_2}(t) = \left( {{\alpha _2}{X_1}(t) - {X_2}(t) - {X_1}(t){X_3}(t)} \right)dt + {\beta _2}{X_2}(t)d{W_2}(t)},  \quad X(0) = x_0 \in \mathbb{R}^3.\\
			{d{X_3}(t) = \left( {{X_1}(t){X_2}(t) - {\alpha _3}{X_3}(t)} \right)dt + {\beta _3}{X_3}(t)d{W_3}(t)}.
		\end{array}\right.	\end{split}
\end{equation}

With the parameters $({\alpha _1},{\alpha _2},{\alpha _3}) = (10,28,8/3)$, the system exhibits its characteristic chaotic behavior, evolving toward the butterfly-shaped Lorenz attractor \cite{hutzenthaler2015numerical, lorenz1963deterministic}. Set $\beta_i = 0.5$ for $i = 1,2,3$, and initial value $x_0 = (1, 1, 20)^{\rm T}$. It can be verified that Assumption \ref{A2.1} holds with $p > 2$. Then, by virtue of Theorem \ref{strong-con2}, ATS-TEM solution converges strongly to the solution of SDE \eqref{lorenz} in $L^q$ sense for $q > 2$. Figure \ref{l1} (Left) displays a sample path generated by ATS-TEM, and Figure \ref{l1} (Right) plots its corresponding adaptive timesteps at each step. 
One observes that more than half of the timesteps are larger than the uniform timestep $\delta_{mean} = 2.8 \times 10^{-4}$.

Next, we are going to compare the performance of ATS-TEM with FS-TEM, FS-TaEM, FS-BEM schemes that use a uniform timestep. Set $\hat{\delta} = 2^{-20}$ and $\check{\delta} = 2^{-12}$.  
Since the exact solution of SDE \eqref{lorenz} cannot be given explicitly, we take the solution generated by the same method with a timestep that is four times smaller as the reference solution. 
Figure \ref{g3_12} shows the comparisons of RMSE against the number of timesteps and CPU time. For a given RMSE=0.5, we observe from Figure \ref{g3_12} (Left) that ATS-TEM costs slightly fewer timesteps than FS-TEM and FS-BEM, and substantially fewer number of timesteps than FS-TaEM. And we observe from Figure \ref{g3_12} (Right) that ATS-TEM costs less CPU time than FS-TEM, FS-TaEM and FS-BEM for a given RMSE = 0.5. 
These show the better performance of ATS-TEM for the stochastic Lorenz system \eqref{lorenz}.

\begin{figure}[htbp]
	\centering
	\begin{subfigure}{0.48\textwidth}
		\centering
		\includegraphics[width=\textwidth]{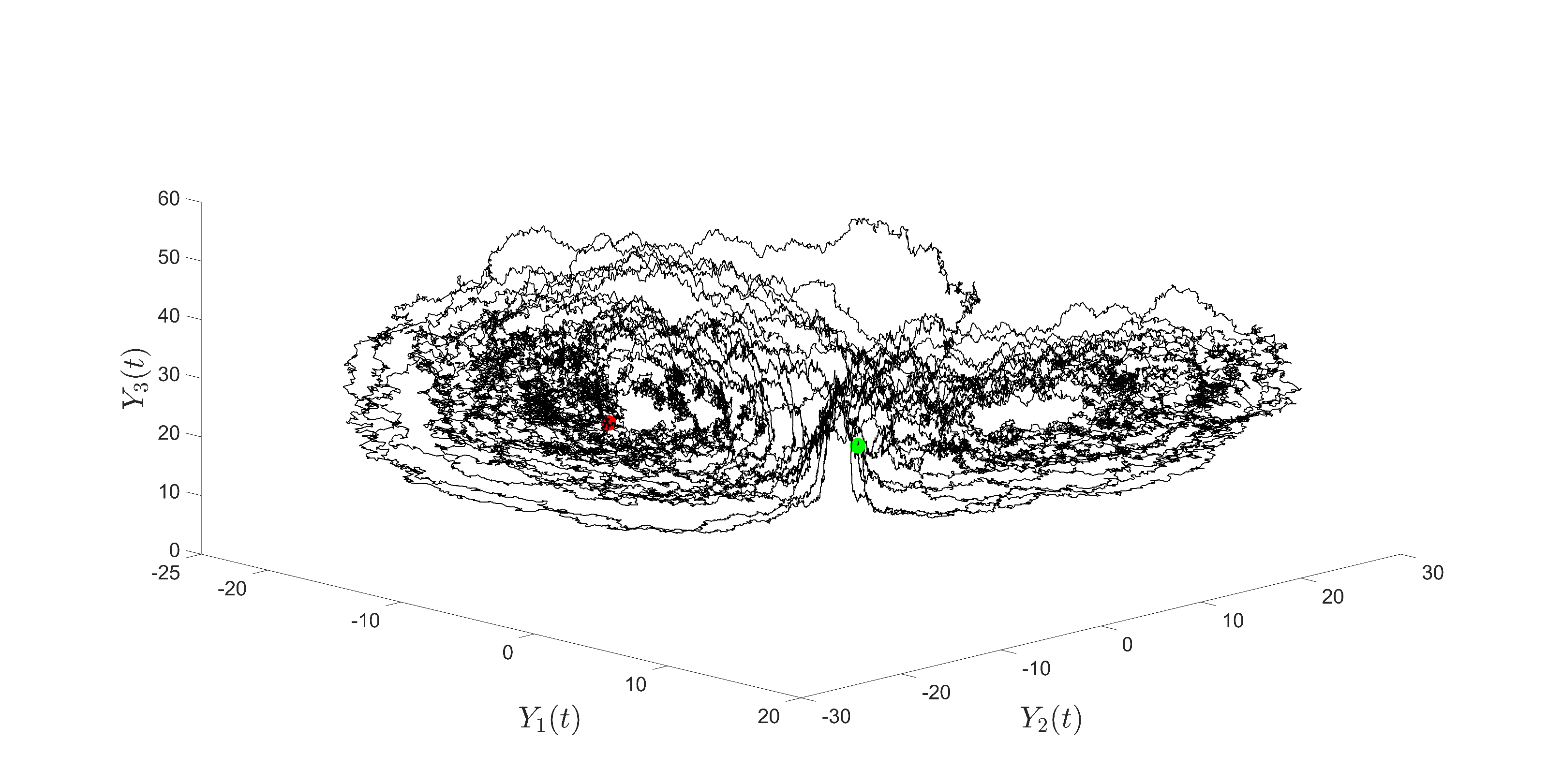}
	\end{subfigure}
	\hspace{0.01\textwidth}
	\begin{subfigure}{0.48\textwidth}
		\centering
		\includegraphics[width=\textwidth]{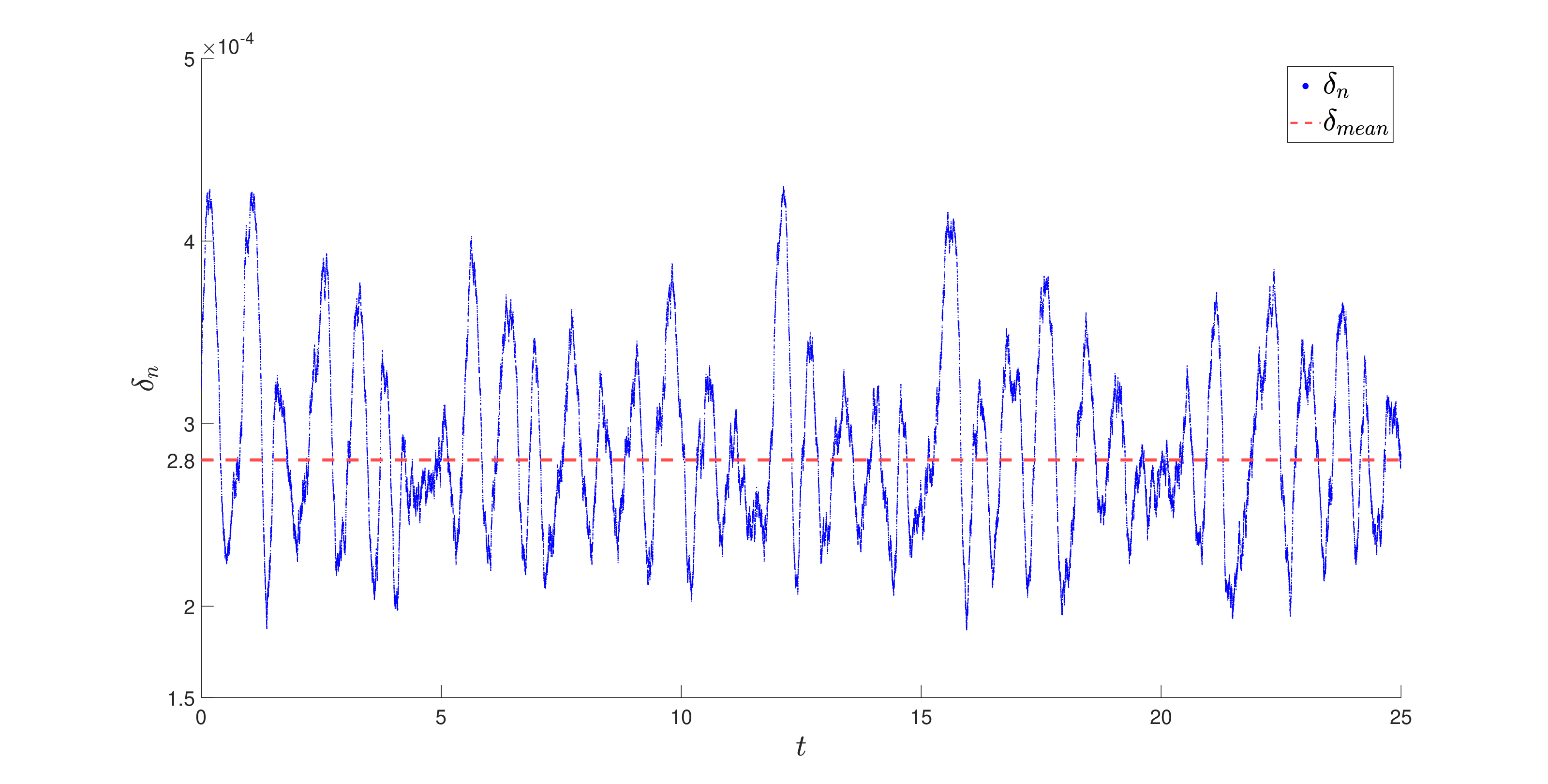}
	\end{subfigure}
	\caption{(Left) One sample path generated by ATS-TEM for SDE \eqref{lorenz}, the green marker indicates the initial state of the process, while the red marker denotes its terminal state at the final time; (Right) The corresponding adaptive timesteps generated along the same path.}
	\label{l1} 
\end{figure}

\begin{figure}[htbp]
	\centering
	\begin{subfigure}{0.48\textwidth}
		\centering
		\includegraphics[width=\textwidth]{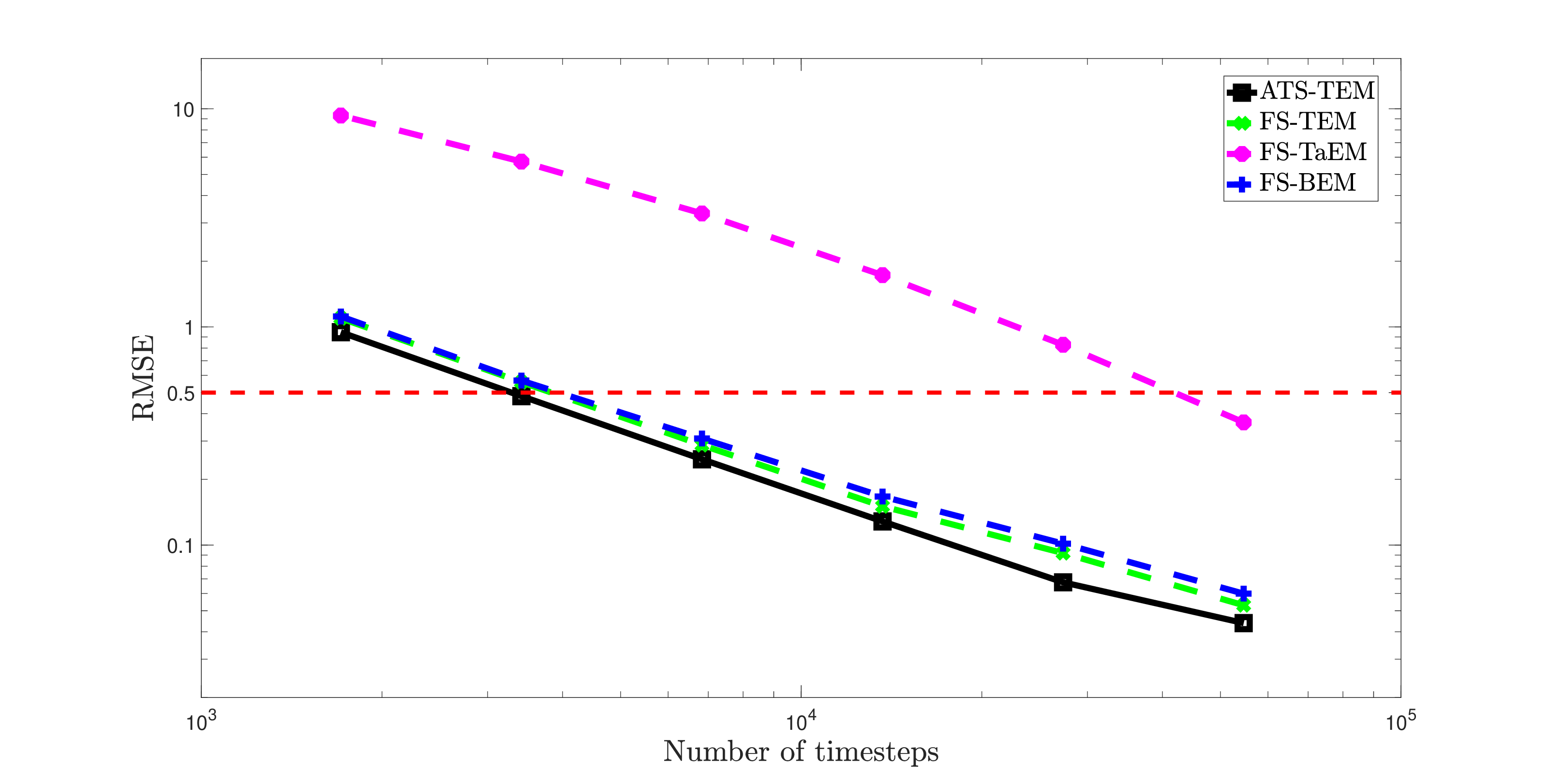}
	\end{subfigure}
	\hspace{0.01\textwidth}
	\begin{subfigure}{0.48\textwidth}
		\centering
		\includegraphics[width=\textwidth]{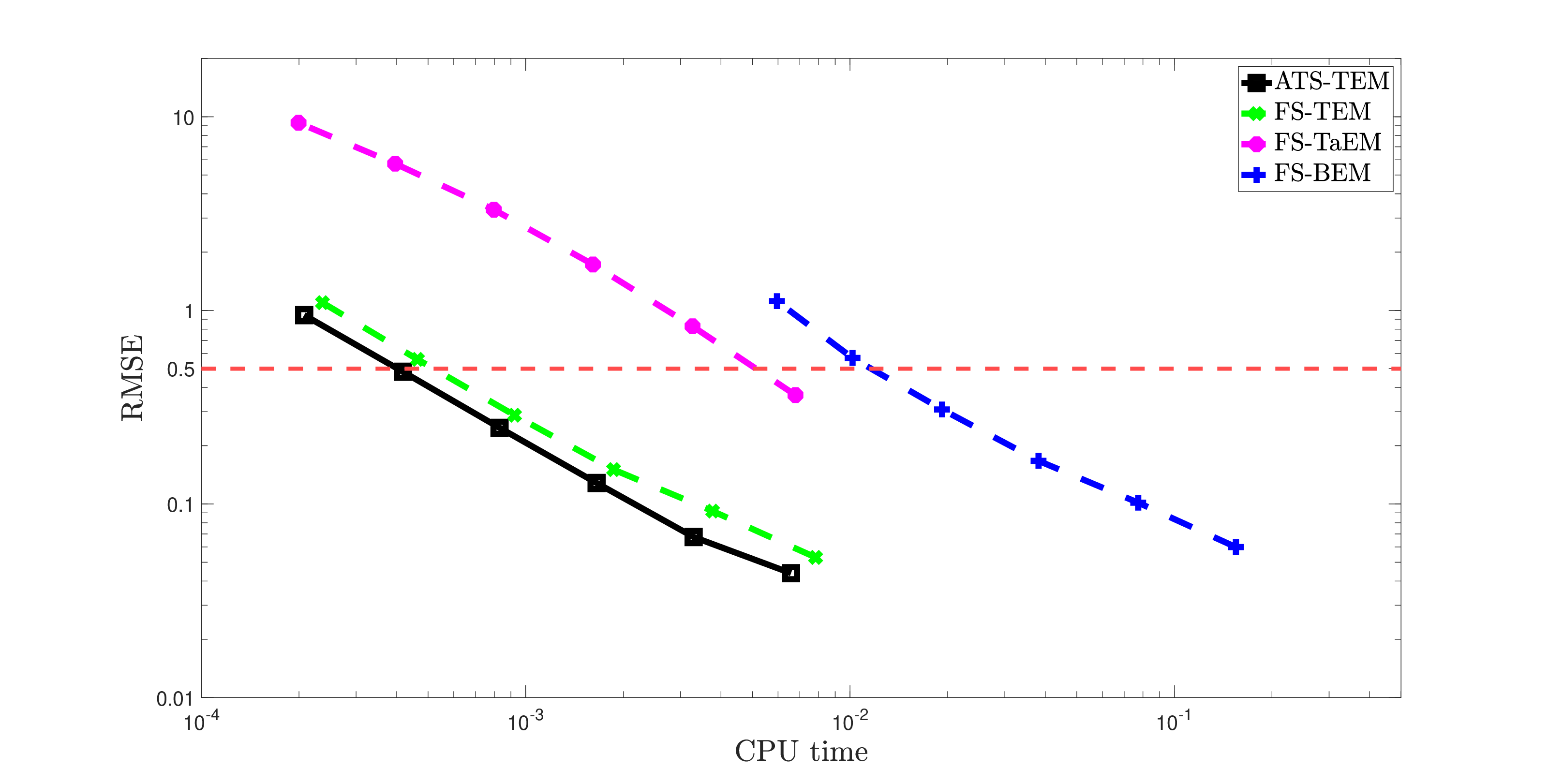}
	\end{subfigure}
	\caption{RMSE of numerical schemes for SDE \eqref{lorenz}.}
	\label{g3_12} 
\end{figure}
}
\end{example}

\section{Conclusions}  \label{sec7}
This paper proposes an ATS-TEM scheme for SDEs whose drift and diffusion coefficients are both have polynomial growth. We establish the $p$th moment boundedness of the numerical solutions and prove the strong convergence of the proposed scheme. Under somewhat stronger conditions, we further obtain a $1/2$-order of strong convergence rate in $L^q$-sense ($q > 2$). Moreover, we show that FS-TaEM introduced in \cite{sabanis2016euler} can be employed as a backstop scheme, and the corresponding ATS-TaEM also achieves a $1/2$-order of convergence rate. Several numerical examples, including stiff and chaotic SDEs, are presented to validate the theoretical results and further verify the performance of the proposed scheme. In our future work, the preservation of the long-time asymptotic properties of the adaptive time-stepping method, including the uniform-in-time convergence rate and the ergodicity, will be reported.

\vskip 25pt
\bibliographystyle{elsarticle-num} 
\bibliography{ref}

\end{document}